 \definecolor{hellgrau}{gray}{0.9}
 \definecolor{hgrau}{gray}{0.5} 
\theoremstyle{plain} 
\newtheorem{thm}{Theorem}[section]
\newtheorem{prop}[thm]{Proposition}
\newtheorem{lem}[thm]{Lemma}
\newtheorem{cor}[thm]{Corollary}
\newtheorem*{thm*}{Theorem}
\theoremstyle{definition}
\newtheorem{defn}[thm]{Definition}
\newtheorem{rem}[thm]{Remark}
\newtheorem{ex}[thm]{Example}
\numberwithin{equation}{section}
\renewcommand{\theta}{\vartheta}
\renewcommand{\phi}{\varphi}
\renewcommand{\epsilon}{\varepsilon}
\renewcommand{\subset}{\subseteq}
\newcommand{\N}{\mathbb N}
\newcommand{\Z}{\mathbb Z}
\newcommand{\R}{\mathbb R}
\newcommand{\C}{\mathbb C}
\newcommand{\CC}{\mathcal C}
\DeclareMathOperator{\id}{id}
\DeclareMathOperator{\lspan}{span}
\DeclareMathOperator{\spec}{sp}
\DeclareMathOperator{\mult}{multinest}
\begin{document}
\title[Partition $C^*$-algebras II]{Partition $C^*$-algebras II -- Links to Compact Matrix Quantum Groups}
\author{Moritz Weber}
\address{Saarland University, Fachbereich Mathematik, Postfach 151150,
66041 Saarbr\"ucken, Germany}
\email{weber@math.uni-sb.de}
\date{\today}
\subjclass[2010]{46LXX (Primary); 20G42, 05A18, 05E10 (Secondary)}
\keywords{$C^*$-algebras, set partitions, relations, universal $C^*$-algebras, compact matrix quantum groups, quantum groups, easy quantum groups, Banica-Speicher quantum groups}
\thanks{The author was supported by the ERC Advanced Grant NCDFP, held by Roland Speicher, by the SFB-TRR 195, and by the DFG project \emph{Quantenautomorphismen von Graphen}.}

\begin{abstract}
In a recent article, we gave a definition of partition $C^*$-algebras. These are universal $C^*$-algebras based on algebraic relations which are induced from partitions of sets.
In this follow up article, we show that often we can associate a Hopf algebra structure to partition $C^*$-algebras, and  also a  compact matrix quantum group structure. This follows the lines of Banica and Speicher's approach to quantum groups; however, we access them in a more algebraic way circumventing Tannaka-Krein duality. We give criteria when these quantum groups are quantum subgroups of Wang's free orthogonal quantum group. As a consequence, we see that even if we start with (generalized) categories of partitions which do not contain the pair partitions, in many cases we do not go beyond the class of Banica-Speicher quantum groups (aka easy quantum groups). However, we also discuss possible non-unitary Banica-Speicher quantum groups.
\end{abstract}

\maketitle

\section*{Introduction}

The definition of partition $C^*$-algebras \cite{PartI} is derived from Banica-Speicher quantum groups (also called easy quantum groups) \cite{BS,WeEQGLN,WeLInd}: Given a Banica-Speicher quantum group $(A,u)$, the $C^*$-algebra $A$ is a partition $C^*$-algebra. The intention for writing the article \cite{PartI} and the present follow up article was:

\begin{itemize}
\item[(1)] To provide an access to the purely $C^*$-algebraic side of Banica-Speicher quantum groups (forgetting the compact matrix quantum group structure) and to collect open problems for $C^*$-algebraists.
\item[(2)] To derive by algebraic means the definition of categories of partitions as well as the compact matrix quantum group structure of Banica-Speicher quantum groups; moreover, to prove algebraically that for defining $A$, we only need to impose the $C^*$-algebraic relations of the generators of a category rather than the relations of all elements in the category.
\item[(3)] To extend the Banica-Speicher machine to categories of partitions not containing the pair partitions $\paarpart,\baarpart$ in order to obtain non-orthogonal (or rather non-unitary) Banica-Speicher quantum groups.
\end{itemize}

While (1) and parts of (2) were successfully achieved in \cite{PartI}, the remainder of (2) is solved in Section \ref{SectExCMQGandHopf} of the present article. While working on (3) however, we discovered a phenomenon which we call ``enforced orthogonality'': Firstly, recall that the relations associated to the pair partitions $\paarpart$ and $\baarpart$ are equivalent to the fundamental matrix $u$ being orthogonal. Secondly, we define a \emph{generalized category of (non-colored) partitions} to be a set of partitions which is closed under taking the tensor product, the composition and the vertical reflection of partitions, and which contains the identy partition $\idpart$. If in addition, it is closed under taking the involution and if it contains the pair partitions $\paarpart$ and $\baarpart$, it is a category of partitions in Banica-Speicher's sense and the associated quantum group (obtained via Tannaka-Krein) is an orthogonal Banica-Speicher quantum group. Note that we do \emph{not} require the containment of $\paarpart$ and $\baarpart$ for a generalized category of partitions. On the other hand, we may prove in this article, that we still obtain a compact matrix quantum group in many cases.

Thus, our task (3) was: Starting with a generalized category of partitions not containing $\paarpart$ and $\baarpart$ -- do we obtain a compact matrix quantum group whose fundamental matrix $u$ is not orthogonal (nor unitary)? In other words, do we obtain a ``non-unitary Banica-Speicher quantum groups''  -- a compact matrix quantum group arising from partitions, but having the property that $u$ is not unitary? The answer is more difficult than expected, because of ``enforced orthogonality'': Even if we start with a generalized category of partitions, the matrix $u$ happens to be orthogonal in many cases. For instance, the relations associated to the partitions $\vierpart$ and $\downvierpart$ together imply orthogonality of $u$, too. In Section \ref{SectEnforcedOrth}, we study this enforced orthogonality in detail and in Section \ref{SectSummary} we discuss possible non-unitary Banica-Speicher quantum groups.

\section{Main results}

In  \cite{PartI}, we defined partition $C^*$-algebras as follows. 
Given a partition $p$ with $k$ upper and $l$ lower points and given $n\in\N$, we consider the following relations $R(p)$ for self-adjoint elements $u_{ij}$, with $i,j=1,\ldots,n$:
\[\boxed{R(p):\qquad\sum_{\gamma_1,\ldots,\gamma_k=1}^n \delta_p(\gamma,\beta) u_{\gamma_1\alpha_1}\ldots u_{\gamma_k\alpha_k}
=\sum_{\gamma_1',\ldots,\gamma_l'=1}^n \delta_p(\alpha,\gamma') u_{\beta_1\gamma_1'}\ldots u_{\beta_l\gamma_l'}\qquad\forall \alpha,\beta}\]
For a collection $X$ of partitions, we define a partition $C^*$-algebra as the following unital universal $C^*$-algebra:
\[\boxed{A_n(X):=C^*(1,u_{ij}, i,j=1,\ldots,n\;|\; u_{ij}=u_{ij}^*, \textnormal{the relations $R(p)$ hold for all }p\in X)}\]
See Section \ref{SectPart} and \cite{PartI} for details. Note that if $T_p$ is Banica and Speicher's linear map associated to a partition $p\in P(k,l)$, then the intertwiner relation $T_pu^{\otimes k}=u^{\otimes l}T_p$ is equivalent to $R(p)$.

While in \cite{PartI} we restricted to the $C^*$-algebraic side of partition $C^*$-algebras, we  now come to the quantum algebraic point of view. We  show that we may equip partition $C^*$-algebras with a quantum group structure. 

\begin{thm*}[Thm. \ref{ThmExCMQG1}]
Let $X\subset P$ be a set of partitions, let $p\in P(0,l)$ be a partition such that $p\neq \singleton^{\otimes l}$, and assume that the relations $R(p)$ and $R(p^*)$ hold in $A_n(X)$. Then $(A_n(X),(u_{ij}))$  is a compact matrix quantum group.
\end{thm*}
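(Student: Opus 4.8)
The plan is to verify the three defining properties of a compact matrix quantum group $(A,u)$ in Woronowicz's sense for $A=A_n(X)$ and $u=(u_{ij})$: that the entries of $u$ generate $A$, that there is a unital $*$-homomorphism $\Delta\colon A\to A\otimes A$ with $\Delta(u_{ij})=\sum_k u_{ik}\otimes u_{kj}$, and that $u$ and $\bar u=(u_{ij}^*)$ are invertible in $M_n(A)$. Generation is immediate from the definition of $A_n(X)$, and since the $u_{ij}$ are self-adjoint we have $\bar u=u$, so invertibility of $\bar u$ coincides with that of $u$. Thus only the comultiplication and the invertibility of $u$ require work.

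For the comultiplication I would use the universal property of $A_n(X)$. Put $v_{ij}:=\sum_{k}u_{ik}\otimes u_{kj}\in A\otimes A$; each $v_{ij}$ is self-adjoint because the $u_{ij}$ are. To see that $(v_{ij})$ satisfies all relations $R(q)$, $q\in X$, I would pass to the intertwiner form $R(q)\Leftrightarrow T_q u^{\otimes k}=u^{\otimes l}T_q$ recalled in the excerpt. Writing $v^{\otimes m}=u^{\otimes m}_{(1)}\,u^{\otimes m}_{(2)}$ in $M_{n^m}(A\otimes A)$, where the subscript indicates in which tensor leg of $A\otimes A$ the matrix $u^{\otimes m}$ lives, and using that $T_q$ is a scalar matrix (so it commutes past each leg), one gets $T_q v^{\otimes k}=u^{\otimes l}_{(1)}T_q u^{\otimes k}_{(2)}=u^{\otimes l}_{(1)}u^{\otimes l}_{(2)}T_q=v^{\otimes l}T_q$, i.e. $R(q)$ holds for $(v_{ij})$. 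By universality there is a unital $*$-homomorphism $\Delta$ with $\Delta(u_{ij})=v_{ij}$. Coassociativity is automatic, since both $(\Delta\otimes\id)\Delta$ and $(\id\otimes\Delta)\Delta$ send $u_{ij}$ to $\sum_{k,m}u_{ik}\otimes u_{km}\otimes u_{mj}$ and $\Delta$ is determined on generators; likewise $u=(\delta_{ij})$ satisfies every $R(p)$, so a counit with $\epsilon(u_{ij})=\delta_{ij}$ exists (although this is not needed for the axioms).

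It remains to prove that $u$ is invertible in $M_n(A)$. Here I would first reduce the problem: since the entries are self-adjoint, the matrix transpose equals the $C^*$-adjoint, $u^t=u^*$, so it suffices to show that the positive elements $u^*u=u^tu$ and $uu^*=uu^t$ are invertible in $M_n(A)$ (then $(u^*u)^{-1}u^*$ is a left inverse and $u^*(uu^*)^{-1}$ a right inverse of $u$). Next I would reformulate the hypotheses for $p\in P(0,l)$: identifying $T_p$ with a vector $\xi_p\in(\C^n)^{\otimes l}$, the relation $R(p)$ reads $u^{\otimes l}\xi_p=\xi_p$, while $R(p^*)$ reads $\xi_p^*u^{\otimes l}=\xi_p^*$, whose adjoint is $(u^*)^{\otimes l}\xi_p=\xi_p$. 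Thus $u^{\otimes l}$ and its adjoint both fix $\xi_p$. Finally, the hypothesis $p\neq\singleton^{\otimes l}$ is exactly the statement that $p$ has at least one block of size $\geq 2$, so that $\xi_p$ genuinely identifies two of the tensor legs, rather than being the product vector $(\sum_i e_i)^{\otimes l}$ occurring for the all-singleton partition.

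The main obstacle is to convert this pairing into invertibility of $u$. The mechanism I would exploit is that a block $\{s,t\}$ of size $\geq 2$ forces the indices in legs $s$ and $t$ to coincide in the fixed-vector relation, which, after specializing the remaining components of $\beta$ and summing, produces an identity of the shape $a\,b=\delta$ in $A$ with $a$ positive; taking adjoints and using self-adjointness of the $u_{ij}$ upgrades such a one-sided identity to a two-sided one, yielding invertibility of the relevant entries and hence of $u^*u$ and $uu^*$. I expect the delicate point to be carrying this out for an \emph{arbitrary} $p$, where crossing blocks prevent the products of generators from factorizing cleanly. By contrast, for $p=\singleton^{\otimes l}$ the relation degenerates into a statement about the row sums $\sum_j u_{ij}$ alone, which explains precisely why that case must be excluded. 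The self-adjointness $u^t=u^*$ is the structural feature that makes the passage from one-sided to two-sided inverses work, and it is the heart of the invertibility argument.
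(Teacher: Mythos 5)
Your first half is sound: the verification that $v_{ij}=\sum_k u_{ik}\otimes u_{kj}$ satisfies all relations $R(q)$, $q\in X$, via the intertwiner form $T_qu^{\otimes k}=u^{\otimes l}T_q$ and the factorization $v^{\otimes m}=u^{\otimes m}_{(1)}u^{\otimes m}_{(2)}$, is correct, and it is just a repackaging (through Lemma \ref{LemIntertwiner}) of the paper's direct index computation in Proposition \ref{PropExHopf}(a). Likewise, the observation that $\bar u=u$ because the generators are self-adjoint correctly reduces everything to the invertibility of $u$.

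That invertibility is precisely where your proposal has a genuine gap: you describe a ``mechanism'' but explicitly leave its execution for arbitrary $p$ open, and the two claims you make about it do not hold up. First, self-adjointness of the entries does \emph{not} upgrade one-sided inverses to two-sided ones: from $ut=1$ one gets $(ut)^*=t^*u^*=1$, i.e.\ a left inverse for $u^*=u^t$, which is not a left inverse for $u$ (transposition is not an anti-homomorphism on $M_n(A)$ when $A$ is noncommutative, and left invertibility of $u^*$ is just right invertibility of $u$ in disguise, so no new information is gained; recall also that in a $C^*$-algebra one-sided invertibility is genuinely weaker than invertibility, as the unilateral shift shows). This is exactly why the hypothesis $R(p^*)$ is indispensable: the paper uses $R(p)$ to build a right inverse of $u$ and $R(p^*)$, viewed as a partition in $P(l,0)$, to build a left inverse, via Lemma \ref{LemInvert}(a) and (b). Second, there is a positional obstruction you gesture at but do not resolve: if the size-$\geq 2$ block sits at positions $\{s,t\}$ with $s>1$, then in the relation $R(p)$ the factor $u_{i\gamma_s'}$ carrying the free row index sits in the \emph{middle} of the monomial $u_{\beta_1\gamma_1'}\cdots u_{\beta_l\gamma_l'}$, so the identity cannot be read as a matrix equation $\sum_{\gamma}u_{i\gamma}t_{\gamma j}=\delta_{ij}$, and noncommutativity forbids reordering. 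The paper removes this obstruction with the weak line rotation Lemma \ref{LemWeakRot}(a) --- which again consumes $p^*\in H_n(X)$ --- to move a non-singleton point into the first position, and only then writes down the explicit right inverse $t_{ij}$ of Lemma \ref{LemInvert}(a). Your detour through positivity and invertibility of $u^*u$ and $uu^*$ buys nothing here, since the only available route to inverting those elements is to produce exactly these one-sided inverses of $u$ first.
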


This is  a quite algebraic approach to Banica-Speicher type quantum groups circumventing the technicalities of Woronowicz's Tannaka-Krein theorem \cite{WoTK}. Moreover, while in Banica-Speicher's theory we always start with a category of partitions in order to abtain a quantum group (via Tannaka-Krein), we may start with arbitrary sets of partitions in our approach:

\begin{displaymath}
\xymatrix{
  \textnormal{categories of partitions} \ar[rr]^{\textnormal{Tannaka-Krein}} \ar[d]^{\subset}
 &&\textnormal{Banica-Speicher QGs}\ar[d]^{\subset}\\
 \textnormal{more general sets of partitions}
\ar[r]
 &\textnormal{Partition $C^*$-alg.s}\ar[r]_{\textnormal{Thm. \ref{ThmExCMQG1}}}
 &\textnormal{Partition QGs}
 }
\end{displaymath}

However, note that there is always a generalized category of partitions in the back, i.e. given an arbitrary set $X$, there is a generalized category of partitions $H_n(X)$ such that $A_n(X)=A_n(H_n(X))$. Moreover, even if $X$ does not contain the pair partitions $\paarpart$ and $\baarpart$, some partitions $p\in P$ are such that the relations $R(p)$ imply the relations $R(\paarpart)$ and $R(\baarpart)$ (i.e. orthogonality of $u$). The set of such partitions is denoted by $P_O\subset P$ and we have the following result on ``enforced orthogonality''.

\begin{thm*}[Thm. \ref{ThmCriteria}  and Prop. \ref{PropEnforcedOrth}]
Let $X\subset P$ and assume a few technical conditions on $X$. If $R(p)$ and $R(p^*)$ hold in $A_n(X)$ for some $p\in P_O$, then $A_n(X)=A_n(\CC_X)$, where $\CC_X$ is the Banica-Speicher category of partitions generated by $X$ (in particular $\paarpart,\baarpart\in\CC_X$). We investigate $P_O$ in detail.
\end{thm*}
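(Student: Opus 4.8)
The plan is to isolate two independent mechanisms. The first turns the hypothesis $p\in P_O$ into orthogonality of the fundamental matrix $u=(u_{ij})$; the second shows that, once $u$ is orthogonal, the generalized category governing $A_n(X)$ is forced to be a genuine Banica-Speicher category. Throughout I would lean on the reduction recorded in the introduction, namely that there is a generalized category $H_n(X)$ with $A_n(X)=A_n(H_n(X))$; in particular the partitions $q$ whose relations $R(q)$ hold in $A_n(X)$ (as consequences of the generating relations) are exactly those of $H_n(X)$. The ``few technical conditions on $X$'' are what let me invoke Thm.~\ref{ThmExCMQG1}, so that $(A_n(X),(u_{ij}))$ is a compact matrix quantum group and the counit, comultiplication and antipode are available.

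\textbf{Step 1: enforcing orthogonality.} By the definition of $P_O$, a partition $p\in P_O$ is one for which $R(p)$ and $R(p^*)$ already imply $R(\paarpart)$ and $R(\baarpart)$, that is, the two families $\sum_k u_{ik}u_{jk}=\delta_{ij}$ and $\sum_k u_{ki}u_{kj}=\delta_{ij}$ expressing orthogonality of $u$; granting this, $u$ is orthogonal in $A_n(X)$. The real work is the determination of $P_O$. I would treat the model case $p=\vierpart$, $p^*=\downvierpart$ first. Writing out $R(\vierpart)$ gives, for every multi-index $(i,j,l,m)$, that $\sum_k u_{ik}u_{jk}u_{lk}u_{mk}$ equals $1$ when $i=j=l=m$ and $0$ otherwise; specialising to $(j,i,i,j)$ with $i\neq j$ rewrites the vanishing sum as $\sum_k (u_{ik}u_{jk})^*(u_{ik}u_{jk})=0$, and since a sum of positive elements is zero only if each summand is, this forces $u_{ik}u_{jk}=0$ for all $k$ whenever $i\neq j$; the partition $\downvierpart$ gives the column analogue $u_{ki}u_{kj}=0$. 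This already yields the off-diagonal parts of $R(\paarpart)$ and $R(\baarpart)$. The subtler point is the diagonal normalisation $\sum_k u_{ik}^2=1=\sum_k u_{ki}^2$: the fourth-power identities $\sum_k u_{ik}^4=1$ have to be upgraded to sums of squares, and here I expect to need the quantum group structure from Thm.~\ref{ThmExCMQG1} (invertibility of $u$, together with the counit and antipode) rather than a purely formal rewriting of $R(\vierpart)$. Abstracting which combinatorial features of a general $p$ make this two-stage argument run is exactly what the detailed investigation of $P_O$ must supply.

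\textbf{Step 2: upgrading the category.} Since $u$ is now orthogonal, $R(\paarpart)$ and $R(\baarpart)$ hold in $A_n(X)$, hence $\paarpart,\baarpart\in H_n(X)$. Thus $H_n(X)$ is a generalized category (closed under tensor product, composition and vertical reflection, and containing $\idpart$) which in addition contains the two pair partitions. The key lemma here is that any such generalized category is automatically closed under the involution $q\mapsto q^*$: the caps and cups $\paarpart$, $\baarpart$ allow one to rotate points between the two rows, the snake (zig-zag) relations guaranteeing that these rotations are mutually inverse, and combining rotation with the vertical reflection already present produces the full up-down flip. Consequently $H_n(X)$ is a Banica-Speicher category containing $X$, so $\CC_X\subseteq H_n(X)$. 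Together with $A_n(X)=A_n(H_n(X))$ this means every relation $R(q)$ with $q\in\CC_X$ holds in $A_n(X)$; hence the canonical surjection $A_n(X)\surj A_n(\CC_X)$ admits an inverse and is an isomorphism, i.e. $A_n(X)=A_n(\CC_X)$, with $\paarpart,\baarpart\in\CC_X$ by construction.

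The main obstacle is Step 1, and specifically the diagonal normalisation inside the determination of $P_O$. Enforced orthogonality is a genuinely algebraic phenomenon: one must extract the quadratic relations $\sum_k u_{ki}u_{kj}=\delta_{ij}$ from relations $R(p)$ of a priori higher degree and different combinatorial shape, and whether the extraction succeeds depends delicately on the block structure of $p$ (sizes and parities of blocks and how they link the upper and lower rows) and on how much of the compact matrix quantum group structure one is allowed to use. Pinning down the precise set $P_O$ for which the deduction works -- and, on the other side, exhibiting partitions just outside $P_O$ where orthogonality provably fails -- is where the bulk of the effort lies, and is also the reason the technical hypotheses on $X$ are needed.
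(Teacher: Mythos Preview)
Your Step 2 is correct and matches the paper: once $\paarpart,\baarpart\in H_n(X)$, Theorem~\ref{ThmPartI}(e) makes $H_n(X)$ closed under the involution, hence a Banica-Speicher category, and then the sandwich $X\subseteq\CC_X\subseteq H_n(X)$ together with $A_n(X)=A_n(H_n(X))$ forces $A_n(X)=A_n(\CC_X)$.

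The gap is in Step 1, and it is precisely the diagonal normalisation. Your speculation that the counit and antipode will rescue you is circular: by Proposition~\ref{PropExHopf}(c) the antipode relation $m\circ(S\otimes\id)\circ\Delta=\eta\circ\epsilon$ already \emph{requires} $\paarpart,\baarpart\in H_n(X)$, so you cannot use it to establish orthogonality. Mere invertibility of $u$ does not help either, since you do not yet know that the inverse is $u^t$. The paper's tool is the \emph{Idempotents Lemma} (Lemma~\ref{LemIdem}): if $z=z^*$ in a $C^*$-algebra and $z^k=1$, then $z^2=1$ (functional calculus on $\spec(z)\subset\{-1,1\}$). In your model case, once you have $u_{ik}u_{jk}=u_{ki}u_{kj}=0$ for $i\neq j$, set $z_i:=\sum_k u_{ik}$; the off-diagonal vanishing collapses $z_i^4=\sum_k u_{ik}^4=1$, so $z_i^2=\sum_k u_{ik}^2=1$ by the lemma. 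This spectral step is the genuine $C^*$-algebraic input, and without it the argument does not close.

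You should also be aware that the general investigation of $P_O$ is not just ``abstracting the two-stage argument''. The paper first builds a combinatorial toolbox (weak line rotation, shifted and partial doublings, multi-nestings; Section~\ref{SectCombToolbox}) to reduce an arbitrary $p$ to one of a few normal forms $\pi_m$, $\sigma_m$, $\tau_m$, and only then applies the positivity/Idempotents-Lemma mechanism to those. For instance, a block of length $\geq 3$ is handled by partial doubling to produce a projection-like partition $q=q^*=q^2$ (Lemma~\ref{LemThreeBlock}), and singletons are handled separately via Lemma~\ref{LemPOSingleton}. Your proposal does not yet contain this reduction layer.
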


So, partitions in $P_O$ bring us back to Banica-Speicher quantum groups. However, we expect that the partitions $\positioner$ and $\crosspartoneline$ are \emph{not} in $P_O$; if this was true, then this would lead to first examples of non-unitary Banica-Speicher quantum groups and potentially to a whole new class of compact matrix quantum groups, see Section \ref{SectSummary}. The missing part in our proof is to find concrete operators on a Hilbert space satisfying certain concrete relations, see \cite[Problems 3.5 \& 3.6]{PartI}.

\pagebreak

\section{Basics on partition $C^*$-algebras}
\label{SectPart}

In this section we recall some facts and notions from \cite{PartI}.

\subsection{Partitions and operations}
\label{SectRemind1}

Let $k,l\in\N_0$. A decomposition of the  ordered set $\{1,\ldots,k+l\}$ into disjoint, non-empty subsets (called the \emph{blocks}) whose union is the complete set, is called a \emph{(set) partition}. Such partitions are represented by pictures drawing lines between $k$ upper and $l$ lower points according to the block structure. As an example, the partition 
\[\{1,7,9\},\quad \{2,5\},\quad \{3\},\quad\{4\},\quad\{6,8\}\]
of the set $\{1,\ldots,9\}$, with  $k=4$ and $l=5$, may be drawn as:
\newsavebox{\boxp}
   \savebox{\boxp}
   { \begin{picture}(4,5.5)
     \put(-1,6.35){\parti{5}{1}}
     \put(-1,6.35){\partii{1}{2}{4}}
     \put(0.3,3.5){\line(1,0){2}}
     \put(2.3,3.5){\line(0,1){0.5}}
     \put(2.3,4.8){\line(0,1){0.6}}
     \put(2.3,4.4){\oval(0.8,0.8)[r]}
     \put(0.05,5.3){$\circ$}
     \put(1.05,5.3){$\circ$}
     \put(2.05,5.3){$\circ$}
     \put(3.05,5.3){$\circ$}
     \put(-1,0.35){\uppartii{2}{2}{5}}
     \put(-1,0.35){\upparti{1}{3}}
     \put(-1,0.35){\upparti{1}{4}}     
     \put(0.05,0){$\circ$}
     \put(1.05,0){$\circ$}
     \put(2.05,0){$\circ$}
     \put(3.05,0){$\circ$}     
     \put(4.05,0){$\circ$}  
     \end{picture}}     
\begin{center}
\begin{picture}(10,7.5)
 \put(0,3.5){$p=$}
 \put(1.5,0.7){\usebox{\boxp}}
 \put(7,3.5){$\in P(4,5)$}
 \put(1.8,0){1}
 \put(2.8,0){2}
 \put(3.8,0){3}
 \put(4.8,0){4}
 \put(5.8,0){5}
 \put(1.8,6.5){9}
 \put(2.8,6.5){8}
 \put(3.8,6.5){7}
 \put(4.8,6.5){6}  
\end{picture}
\end{center}
We usually omit the numberings of the points. We denote by $P(k,l)$ the set of all partitions with $k$ upper and $l$ lower points, and we let $P$ be the union of all $P(k,l)$, $k,l\in\N_0$. 
Given partitions $p\in P(k,l)$ and $q\in P(k',l')$, we define
\begin{itemize}
\item the \emph{tensor product} $p\otimes q\in P(k+k',l+l')$ by horizontal concatenation, i.e. writing $p$ and $q$ side by side,
\item  the \emph{composition} $qp\in P(k,l')$ by vertical concatenation, i.e. writing $q$ below $p$ (here, we require $l=k'$),
\item the \emph{vertical reflection} $\tilde p\in P(k,l)$ by reflection at the vertical axis,
\item and the \emph{involution} $p^*\in P(l,k)$ by reflection at the horizontal axis, i.e. turning $p$ upside down.
\end{itemize}
See \cite{PartI} for examples of these operations.

\subsection{Partition relations and partition $C^*$-algebras}
\label{SectRemind2}

Let $n\in\N$ and let $A$ be a unital $C^*$-algebra generated by $n^2$ elements $u_{ij}$, $1\leq i,j\leq n$.
Let $p\in P(k,l)$ be a partition. We say that the generators $u_{ij}$ \emph{fulfill the relations} $R(p)$, if
all elements $u_{ij}$ are self-adjoint, and for all $\alpha_1,\ldots, \alpha_k\in\{1,\ldots,n\}$ and  all $\beta_1,\ldots,\beta_l\in\{1,\ldots,n\}$, we have:
\[\sum_{\gamma_1,\ldots,\gamma_k=1}^n \delta_p(\gamma,\beta) u_{\gamma_1\alpha_1}\ldots u_{\gamma_k\alpha_k}
=\sum_{\gamma_1',\ldots,\gamma_l'=1}^n \delta_p(\alpha,\gamma') u_{\beta_1\gamma_1'}\ldots u_{\beta_l\gamma_l'}
\]
In the cases $k=0$ or $l=0$ this is understood as:
\begin{align*}
\delta_p(\emptyset,\beta)1
&=\sum_{\gamma_1',\ldots,\gamma_l'=1}^n \delta_p(\emptyset,\gamma') u_{\beta_1\gamma_1'}\ldots u_{\beta_l\gamma_l'}
&\textnormal{if } k= 0, l\neq 0\\
\sum_{\gamma_1,\ldots,\gamma_k=1}^n \delta_p(\gamma,\emptyset) u_{\gamma_1\alpha_1}\ldots u_{\gamma_k\alpha_k}
&=\delta_p(\alpha,\emptyset)1
&\textnormal{if } k\neq 0, l= 0
\end{align*}
Here, $\delta_p(\alpha,\beta)\in\{0,1\}$ depending on whether or not the multi indices $\alpha$ and $\beta$ match the block structure of the partition $p$, see \cite[Sect. 2.1]{PartI}.

\begin{ex}
\label{ExOrth}
We have:
\begin{itemize}
\item[(a)] $R(\paarpart)$: $\sum_m u_{im}u_{jm}=\delta_{ij}$ for all $i,j$, which means $uu^t=1$ for $u=(u_{ij})$.
\item[(b)] $R(\baarpart)$: $\sum_m u_{mi}u_{mj}=\delta_{ij}$ for all $i,j$, which means $u^tu=1$.
\end{itemize}
\end{ex}

For $n\in\N$ and a set $X\subset P$, we define the \emph{partition $C^*$-algebra associated to} $X$ as  the unital universal $C^*$-algebra
\[A_n(X):=C^*(1, u_{ij}, i,j=1,\ldots,n\;|\; u_{ij}=u_{ij}^*, \textnormal{the relations $R(p)$ hold for all }p\in X),\]
if it exists. In this case, we say that $X$ is \emph{$n$-admissible}. One can show that $X$ is $n$-admissible for all $n\in\N$, if $\paarpart\in X$ or $\baarpart\in X$, see \cite[Lemma 2.4]{PartI}. See \cite[Sect. 2 and App. A]{PartI} for examples of relations $R(p)$ and $C^*$-algebras $A_n(X)$.
If $X\subset P$ is an $n$-admissible set of partitions, we denote by $H_n(X)$ the set of all partitions $p\in P$ such that the associated relations $R(p)$ hold in $A_n(X)$. We hence have $X\subset H_n(X)$; see also \cite[Lemma 2.7]{PartI} for more on $H_n(X)$.
The main theorem in \cite{PartI} is now that $H_n(X)$ is closed under several operations on partitions.

\begin{thm}[{\cite[Thm. 2.8]{PartI}}]\label{ThmPartI}
Let $X\subset P$ be an $n$-admissible set. Let $p\in P(k,l)$ and $q\in P(k',l')$.
\begin{itemize}
\item[(a)] We have $\idpart\in H_n(X)$.
\item[(b)] If $p,q\in H_n(X)$, then also $p\otimes q\in H_n(X)$.
\item[(c)] If $p,q\in H_n(X)$ and $l=k'$, then also $qp\in H_n(X)$.
\item[(d)] If $p\in H_n(X)$, then also $\tilde p\in H_n(X)$.
\item[(e)] Let  $\paarpart,\baarpart\in H_n(X)$. If $p\in H_n(X)$, then also $p^*\in H_n(X)$.
\end{itemize}
\end{thm}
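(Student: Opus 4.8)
The plan is to reduce every assertion to the intertwiner reformulation recorded just above the statement: for $r\in P(a,b)$ the relations $R(r)$ hold in $A_n(X)$ if and only if $T_r u^{\otimes a}=u^{\otimes b}T_r$, where $u=(u_{ij})$ and $T_r$ is Banica--Speicher's linear map; equivalently, $r\in H_n(X)$ iff $T_r$ intertwines $u^{\otimes a}$ and $u^{\otimes b}$. The whole proof then rides on the functoriality of $r\mapsto T_r$, which I would state at the outset and use as a black box: $T_{\idpart}=\id_{\C^n}$, $T_{p\otimes q}=T_p\otimes T_q$, and $T_qT_p=n^{c(p,q)}T_{qp}$ for some integer $c(p,q)\ge 0$ (the number of middle loops removed under composition).

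Parts (a)--(c) are then formal. For (a), $T_{\idpart}=\id$ turns the intertwiner relation into $u=u$, so $\idpart\in H_n(X)$ unconditionally. For (b), assuming $T_pu^{\otimes k}=u^{\otimes l}T_p$ and $T_qu^{\otimes k'}=u^{\otimes l'}T_q$, I would compute
\begin{align*}
T_{p\otimes q}\,u^{\otimes(k+k')}&=(T_p\otimes T_q)(u^{\otimes k}\otimes u^{\otimes k'})=(T_pu^{\otimes k})\otimes(T_qu^{\otimes k'})\\
&=(u^{\otimes l}\otimes u^{\otimes l'})(T_p\otimes T_q)=u^{\otimes(l+l')}T_{p\otimes q},
\end{align*}
the middle factorisation being legitimate by the mixed-product rule, since the scalar entries of $T_p,T_q$ commute with the algebra-valued entries of the $u^{\otimes}$'s. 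For (c), with $l=k'$, the chain $T_qT_p\,u^{\otimes k}=T_qu^{\otimes l}T_p=u^{\otimes l'}T_qT_p$ shows $T_qT_p$ is an intertwiner; dividing by the nonzero scalar $n^{c(p,q)}$ (here $n\ge1$ is essential) yields $T_{qp}u^{\otimes k}=u^{\otimes l'}T_{qp}$, i.e. $qp\in H_n(X)$.

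For (d) I would leave the intertwiner picture and exploit the $C^*$-structure directly. Applying the conjugate-linear, anti-multiplicative involution $*$ to a valid instance of $R(p)$ keeps it valid in $A_n(X)$, reverses the order of every monomial $u_{\gamma_1\alpha_1}\cdots u_{\gamma_k\alpha_k}$, and fixes each factor because $u_{ij}=u_{ij}^*$. After re-indexing by the order-reversing bijection $\alpha\mapsto\bar\alpha$, $\beta\mapsto\bar\beta$ on multi-indices and using $\delta_{\tilde p}(\gamma,\beta)=\delta_p(\bar\gamma,\bar\beta)$, the resulting family of equations is exactly $R(\tilde p)$; hence $\tilde p\in H_n(X)$. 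The only thing to check carefully here is this $\delta$-bookkeeping.

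The genuinely structural step is (e), and it is where the hypothesis $\paarpart,\baarpart\in H_n(X)$ enters: now $T_{p^*}=T_p^{*}$ is the Hilbert-space adjoint (the transpose of the $0/1$ matrix $T_p$), which is \emph{not} visibly an intertwiner of the original $u^{\otimes}$'s. My plan is to realise $p^*$ inside the calculus generated by $p$ and the pair partitions: using the cup $T_{\paarpart}$, the cap $T_{\baarpart}$ and the snake (zig-zag) identities, $T_{p^*}$ is a composite of tensor products of $T_p$, $T_{\paarpart}$, $T_{\baarpart}$ and identities; equivalently $p^*$ arises from $p,\paarpart,\baarpart$ by finitely many rotations, each a composition with a tensored-in pair partition. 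Since $\idpart\in H_n(X)$ by (a), $\paarpart,\baarpart\in H_n(X)$ by hypothesis, and $p\in H_n(X)$, closure under $\otimes$ and composition from (b) and (c) forces this composite partition --- which equals $p^*$ up to a harmless power of $n$ --- into $H_n(X)$. I expect the main obstacle to be making this duality rigorous for matrices with \emph{non-commuting} entries: one must verify that the rotation formulas, classically phrased for scalar intertwiners, survive verbatim at the level of $A_n(X)$-valued operators, which is precisely the point where the pair partitions are indispensable.
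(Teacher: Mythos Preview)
Your proof is correct and follows the standard intertwiner approach. Note that the present paper does not actually prove this theorem; it is quoted from the companion article \cite{PartI}, where the argument is carried out. Your reduction via Lemma~\ref{LemIntertwiner} and the functoriality $T_{\idpart}=\id$, $T_{p\otimes q}=T_p\otimes T_q$, $T_qT_p=n^{c(p,q)}T_{qp}$ is exactly the expected line, and parts (a)--(d) are clean.

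One remark on (e): the ``main obstacle'' you anticipate --- making the zig-zag duality rigorous for matrices with non-commuting entries --- is not actually an obstacle at all, and you should not present it as one. Once (a)--(d) are established, part (e) becomes a purely \emph{combinatorial} statement: it suffices to exhibit $p^*$ as a partition built from $p$, $\paarpart$, $\baarpart$ and $\idpart$ using only $\otimes$, composition, and vertical reflection. This is done by iterated one-point rotation (see \cite[Lemma~1.1]{TWcomb} or \cite[App.~B]{PartI}): composing $p$ with $\paarpart$ or $\baarpart$ tensored with identities moves a single point between rows, and performing this $k+l$ times yields a rotated version of $p$ which, together with $\tilde{\cdot}$ from (d), gives $p^*$. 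No analysis of $A_n(X)$-valued operators is needed beyond what (b) and (c) already provide; the non-commutativity of the $u_{ij}$ was fully absorbed there. So your instinct to reduce (e) to (a)--(d) plus a combinatorial identity is right, but the worry you flag is a red herring.
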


See \cite[Sect. 4]{PartI} for extensions of the definition of partition $C^*$-algebras to other kinds of partitions, in particular to the non-selfadjoint situation $u_{ij}\neq u_{ij}^*$.

\pagebreak

\section{A combinatorial tool box}
\label{SectCombToolbox}

Before we pass to quantum algebraic structures associated to partition $C^*$-algebras, let us develop in this section certain tools and constructions which can be performed within $H_n(X)$ for an $n$-admissible set $X\subset P$. Since these tools are pureley combinatorial, we may formulate it for slightly more general sets, namely for generalized categories of partitions.

\subsection{Categories of partitions in two senses}

\begin{defn}
\label{DefCateg}
Let $\CC\subset P$ be the union of subsets $\CC(k,l)\subseteq P(k,l)$, for all $k,l\in\N_0$. 
\begin{itemize}
\item[(a)] $\CC$ is called a \emph{generalized category of partitions}, if $p,q\in\CC$ implies $p\otimes q\in\CC$, $pq\in\CC$ and $\tilde p\in\CC$ (for the composition, we assume that the number of lower points of $q$ matches the number of upper points of $p$). Moreover, we require $\idpart\in\CC$. For partitions $p_1,\ldots,p_m\in P$, we denote by $\langle p_1,\ldots,p_m\rangle_g$ the smallest generalized category of partitions containing $p_1,\ldots,p_m$; we say that this category is \emph{generated} by $p_1,\ldots,p_m$.
\item[(b)] $\CC$ is called a \emph{(Banica-Speicher) category of partitions}, if in addition $p\in\CC$ implies $p^*\in\CC$ and we have $\paarpart,\baarpart\in\CC$.
\end{itemize}
\end{defn}

Categories of partitions in the sense of Definition \ref{DefCateg}(b) were introduced first by Banica and Speicher \cite{BS}, see also \cite{WeEQGLN, WeLInd}. Note that in this case, the vertical reflection may be derived from the other operations, \cite[Lemma 1.1]{TWcomb}. See Section \ref{SectExNonBS} for an example of a generalized category of partitions which is not a Banica-Speicher category of partitions.
The key to the generalized Definition \ref{DefCateg}(a) is the following rephrasing of Theorem \ref{ThmPartI}.

\begin{thm}[{\cite[Thm. 2.8]{PartI}}]
Let $X\subset P$ be an $n$-admissible set. Then $H_n(X)$ is a generalized category of partitions. If moreover $\paarpart,\baarpart\in H_n(X)$, then $H_n(X)$ is even a Banica-Speicher category of partitions.
\end{thm}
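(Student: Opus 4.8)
The plan is to recognise this statement as a direct reformulation of Theorem \ref{ThmPartI}: the proof amounts to matching the defining axioms of a (generalized) category of partitions from Definition \ref{DefCateg} against the five items of that theorem, with no new combinatorial input. Since $H_n(X)\subseteq P$, it is automatically graded as $H_n(X)=\bigcup_{k,l}\bigl(H_n(X)\cap P(k,l)\bigr)$, so the required decomposition into pieces $\CC(k,l)\subseteq P(k,l)$ comes for free and need not be argued.

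First I would verify the four requirements of Definition \ref{DefCateg}(a). It demands that $H_n(X)$ contain $\idpart$ and be closed under tensor products, under composition, and under vertical reflection; these are supplied respectively by Theorem \ref{ThmPartI}(a), (b), (c), and (d). The only point deserving attention is the composition axiom: Definition \ref{DefCateg}(a) phrases the matching condition as ``the number of lower points of $q$ equals the number of upper points of $p$'', whereas Theorem \ref{ThmPartI}(c) writes it as $l=k'$ for $p\in P(k,l)$ and $q\in P(k',l')$; after relabeling these coincide, so the correspondence is exact. This establishes that $H_n(X)$ is a generalized category of partitions.

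For the second assertion, I would bring in the extra hypothesis $\paarpart,\baarpart\in H_n(X)$. Beyond the generalized-category axioms already checked, Definition \ref{DefCateg}(b) requires only that $H_n(X)$ contain the pair partitions and be closed under the involution $p\mapsto p^*$. The former is precisely the standing hypothesis, and the latter is exactly Theorem \ref{ThmPartI}(e), whose own hypothesis $\paarpart,\baarpart\in H_n(X)$ is now in force. Combined with the generalized-category structure, this yields that $H_n(X)$ is a Banica-Speicher category of partitions.

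I expect no genuine obstacle at this level: all the substance lives in the cited Theorem \ref{ThmPartI}, where stability of the relations $R(p)$ under each operation is established by manipulating the matching functions $\delta_p$ together with the defining relations. Here the task is purely the bookkeeping of lining the two definitions up, and the only place one could slip is the composition convention noted above.
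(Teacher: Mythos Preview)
Your proposal is correct and matches exactly what the paper does: the paper explicitly introduces this theorem as ``the following rephrasing of Theorem \ref{ThmPartI}'' and gives no further proof, so the entire content is precisely the bookkeeping you describe of lining up the items of Theorem \ref{ThmPartI} with the axioms in Definition \ref{DefCateg}. Your observation about the relabeling in the composition convention is the only point worth noting, and you handle it correctly.
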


The remainder of this section is formulated for generalized categories of partitions, having $H_n(X)$ in mind.

\subsection{Joining and cutting blocks. Nestings}
\label{SectNest}

We recall some basics on generalized categories of partitions from \cite[App. B]{PartI} or rather \cite{TWcomb}.

\begin{lem}
\label{LemBasicOper}
Let $\CC\subset P$ be a generalized category of partitions.
\begin{itemize}
\item[(a)]  let $p\in P(0,l)$ and $q\in P(0,m)$. If $p,q\in \CC$, then every partition obtained from placing $q$ between two legs of $p$ is in $\CC$. We say that $q$ is \emph{nested} between two legs of $p$.
\item[(b)] If $\idpartsingletonww\in H_n(X)$, then $\CC$ is closed under  disconnecting any point from a block of a partition $p\in\CC$ and turning it into a singleton.
\item[(c)]  If $\vierpartrot\in\CC$, then $\CC$ is closed under  connecting neighbouring blocks of a partition $p\in\CC$ on the same line.
\end{itemize}
\end{lem}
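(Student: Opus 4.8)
The plan is to deduce all three closure properties from a single device. In each part the hypothesis supplies one distinguished partition ($q$ in (a), $\idpartsingleton$ in (b), $\vierpartrot$ in (c)); I would build an auxiliary partition $w$ by tensoring copies of $\idpart$ on either side of this distinguished partition so that it sits at the correct horizontal slot, and then compose $w$ with $p$. Because $\CC$ is a generalized category of partitions, it contains $\idpart$ and is closed under $\otimes$ and under composition, so both $w$ and the composite automatically lie in $\CC$. Thus the only thing to verify is that the composite realizes exactly the announced operation on $p$, which is a matter of tracing blocks through the identified middle row of the composition.

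For (a), write $p\in P(0,l)$ and suppose $q$ is to be inserted in the gap between legs $i$ and $i+1$. Put
\[s:=\idpart^{\otimes i}\otimes q\otimes\idpart^{\otimes(l-i)}\in P(l,l+m),\]
whose blocks are $i$ through-strings, then the block pattern of $q$ on $m$ fresh lower points, then $l-i$ through-strings. Forming $sp\in P(0,l+m)$ glues the $l$ lower points of $p$ to the $l$ upper points of $s$; the through-strings transport the block pattern of $p$ to the outer legs, while the legs of $q$ are left intact in the chosen gap. Hence $sp$ is $p$ with $q$ nested between legs $i$ and $i+1$, and $sp\in\CC$. The boundary choices $i=0$ and $i=l$ reproduce $q\otimes p$ and $p\otimes q$.

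For (b), assume $\idpartsingleton\in\CC$ and let $p\in P(k,l)$. To detach the $j$-th lower point, compose with
\[t:=\idpart^{\otimes(j-1)}\otimes\idpartsingleton\otimes\idpart^{\otimes(l-j)}\in P(l,l),\]
forming $tp$: the upper singleton of $\idpartsingleton$ caps off the strand descending from $p$ (so the $j$-th leg leaves its block as it is absorbed into the middle row), while the lower singleton yields a fresh singleton in position $j$ of the result; all other columns pass through unchanged. Thus $tp$ is $p$ with its $j$-th lower point turned into a singleton; an upper point is detached symmetrically by composing the analogous partition on top, and iterating handles any set of points. For (c), assume $\vierpartrot\in\CC$, the four-point block in $P(2,2)$, and to join the blocks of the neighbouring lower points $j,j+1$ compose with
\[w:=\idpart^{\otimes(j-1)}\otimes\vierpartrot\otimes\idpart^{\otimes(l-j-1)}\in P(l,l).\]
Since $\vierpartrot$ puts its two upper points in one block, the identified middle points $j$ and $j+1$ fuse the two blocks of $p$ that meet the lower line there, and the two lower points of $\vierpartrot$ become the legs of the merged block; everything else is carried by through-strings. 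Hence $wp$ is $p$ with the two neighbouring blocks joined, and the upper line is symmetric.

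I expect the only real hurdle to be this block-merging bookkeeping: one has to check carefully that each composite creates precisely the intended fusion (or the intended cut) and introduces no spurious extra connection or stray singleton, and that the relabelling of legs is consistent. The degenerate cases cost nothing but should be noted --- in (c) the formula leaves $p$ unchanged when $j,j+1$ already share a block, and the boundary slots in (a) behave as above. This is exactly the kind of diagrammatic verification carried out in \cite[App. B]{PartI} and \cite{TWcomb}, to which the argument reduces.
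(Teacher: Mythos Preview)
Your proposal is correct and follows essentially the same approach as the paper: in each case you compose $p$ with a partition of the form $\idpart^{\otimes\alpha}\otimes r\otimes\idpart^{\otimes\beta}$, where $r$ is $q$, $\idpartsingleton$, or $\vierpartrot$ respectively, and then verify that the composite realizes the desired operation. The paper's proof is the same one-line observation (with references to \cite{TWcomb} for the block-tracing details you have spelled out explicitly).
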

\begin{proof}
The assertions follow from composing $p$ with  $\idpart^{\alpha}\otimes q\otimes\idpart^{\beta}$ for suitable $\alpha$ and $\beta$ in case (a), see \cite[Lem. 1.1(d)]{TWcomb}; with  $\idpart^{\alpha}\otimes \idpartsingletonww\otimes\idpart^{\beta}$ in case (b), see \cite[Lem. 1.3]{TWcomb}, and  $\idpart^{\alpha}\otimes \vierpartrot\otimes\idpart^{\beta}$ in case (c), see  \cite[Lem. 1.3]{TWcomb}.
\end{proof}

For $p\in P(0,l)$,  $1\leq s\leq l$ and $m\in\N$, we denote by 
\[\mult(p,s,m)\in P(0,lm)\]
the partition obtained from \emph{multi nesting} $p$ between the $s$-th and $(s+1)$-th legs of itself, $m$ times, i.e.
\begin{itemize}
\item we nest one copy of $p$ between the  $s$-th and $(s+1)$-th legs of $p$ in the sense of Lemma \ref{LemBasicOper}(a), obtaining $\mult(p,s,2)$;
\item we then  recursively nest  $\mult(p,s,r)$ between the  $s$-th and $(s+1)$-th legs of $p$, for $2\leq r\leq m-1$;
\item and we finally obtain $\mult(p,s,m)$.
\end{itemize}

Here is an example of a multi nesting:

\begin{center}
\begin{picture}(20,5)
 \put(0,0){$\mult(\dreipart,1,4)=$}
 \put(7.5,0){\uppartiii{4}{1}{11}{12}}
 \put(7.5,0){\uppartiii{3}{2}{9}{10}}
 \put(7.5,0){\uppartiii{2}{3}{7}{8}}
 \put(7.5,0){\uppartiii{1}{4}{5}{6}}  
\end{picture}
\end{center}

\subsection{Line rotation}

Let $p,p'\in P(0,l)$ be two partitions. 
\begin{itemize}
\item[(a)] If for all $2\leq i<j\leq l$, the points $i$ and $j$ are in the same block of $p$ if and only if the points $i-1$ and $j-1$ are in the same block of $p'$, we say that $p'$ is a \emph{weakly line rotated version of} $p$.
\item[(b)] If in addition for all $2\leq j\leq l$ the points $1$ and $j$ are in the same block of $p$ if and only if the points $j-1$ and $l$ are in the same block of $p'$, we say that $p'$ is a \emph{line rotated version of} $p$.
\end{itemize}
The above defines a (weak) line rotation of the first point of $p$ to the right of all of its other points. After a line rotation, this last point is connected to the block to which it was connected before the rotation; for a weak line rotation, this requirement is dropped.
 We may also define a (weak) line rotation to the left. In the sequel, we will not specify left or right rotation.
Here is an example of a weakly line rotated version $p'$ of $p$ and a line rotated version $p''$ of $p$.

\begin{center}
\begin{picture}(29,4)
 \put(0,1){$p=$}
 \put(1,1){\uppartiv{2}{1}{4}{6}{7}}
 \put(1,1){\upparti{1}{2}}
 \put(1,1){\uppartii{1}{3}{5}}
 \put(10,1){$p'=$}
 \put(11,1){\uppartiii{2}{3}{5}{6}}
 \put(11,1){\upparti{1}{1}}
 \put(11,1){\uppartii{1}{2}{4}} 
 \put(11,1){\upparti{1}{7}}
 \put(11,0){(weakly line rotated)}
 \put(20,1){$p''=$}
 \put(21,1){\uppartiv{2}{3}{5}{6}{7}}
 \put(21,1){\upparti{1}{1}}
 \put(21,1){\uppartii{1}{2}{4}} 
 \put(23,0){(line rotated)} 
\end{picture}
\end{center}

\begin{lem}
\label{LemWeakRot}
Let $\CC\subset P$ be a generalized category of partitions, let $p\in P(0,l)$ and $q\in P(k,0)$. Assume $p\in\CC$.
\begin{itemize}
\item[(a)] If $q\in\CC$, then any weakly line rotated version of $p$ is in $\CC$.
\item[(b)] If $q,q^*\in\CC$ and $q^*\neq\singleton^{\otimes k}$, then any line rotated version of $p$ is in $\CC$.
\end{itemize}
\end{lem}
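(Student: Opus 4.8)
The plan is to realize each (weak) line rotation of $p$ as the result of composing $p$ with an explicit ``corner-turning'' gadget assembled from $\idpart$, from $q$ and its vertical reflection $\tilde q$, and (in case (b)) from the cup $q^*$, and then to invoke the closure of a generalized category under tensor products, compositions and vertical reflection (Definition \ref{DefCateg}(a)) to conclude $p'\in\CC$. It suffices to treat a single elementary rotation, namely moving the first leg of $p$ to the far right, since this is exactly the operation named in the definition. I view the $l$ legs of $p$ as sitting on a line (equivalently, on a circle), so that the desired $p'$ is the cyclic shift of $p$ by one position.

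For (b) I would bend the first leg ``over the top'' to the right by a Frobenius-type move: tensor $p$ with enough identity strands, cup off on the appropriate side with $q^*$ and cap off on the other side with $q$, using $k-1$ of the legs of $q$ and $q^*$ to form through-(identity) strands and one block of $q^*$ to carry the connection of the rotated leg to its new rightmost position. The hypothesis $q^*\neq\singleton^{\otimes k}$ guarantees that $q^*$ possesses a block of size $\geq 2$; this is precisely the feature that lets the cup transmit a genuine connection, so that the block to which the first leg belonged in $p$ is faithfully transported to the new last leg of $p'$ -- which is exactly the additional requirement distinguishing a line rotation from a weak one. Every partition occurring in the construction lies in $\CC$, being a tensor product and composition of $\idpart$, $q$, $q^*\in\CC$ with $p\in\CC$.

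For (a) I would run the same corner-turning construction but with only the cap $q$ at my disposal. One can still bend the first leg around, but without the cup $q^*$ there is no mechanism to reconstitute the old connection at the new position, so the new rightmost leg is attached in an unconstrained way. This is exactly the freedom allowed by the definition of a weakly line rotated version (whose condition only constrains the images of the legs $2,\ldots,l$), and by choosing the auxiliary data in the gadget appropriately one reaches every admissible $p'$. Again the closure of $\CC$ under the category operations keeps every intermediate partition inside $\CC$.

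The main obstacle is the explicit bookkeeping: writing down the gadget precisely and checking that the composite partition has exactly the claimed block structure, i.e. that the legs $2,\ldots,l$ are uniformly shifted by one while the first leg is either reconnected correctly (case (b)) or left free (case (a)). The delicate point is the correct use of $q^*\neq\singleton^{\otimes k}$: one must verify that, after capping the $k-1$ auxiliary legs, the surviving block of $q^*$ indeed links the rotated leg to the shifted diagram, and that no unwanted identifications among the legs $2,\ldots,l$ are introduced. Planarity causes no trouble, since only bends (cups and caps) and never the crossing $\crosspart$ are used; this is also what confines the construction to (weak) line rotations rather than to arbitrary permutations of the legs.
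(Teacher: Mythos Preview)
Your outline for (b) is essentially the paper's argument: one sandwiches $p$ between $q^*$ (as cup) and $q$ (as cap), with $k-1$ identity strands linking the surplus legs of $q^*$ to those of $q$, and the non-singleton block of $q^*$ then carries the first leg of $p$ around to the new last position. One detail you glide over is that this only works once the non-singleton block of $q^*$ has been arranged to contain the \emph{last} point of $q^*$; the paper does this by first applying part (a) to $q^*$ (which lies in $P(0,k)$), so (b) genuinely relies on (a).

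For (a), however, there is a real gap. You write that you ``would run the same corner-turning construction but with only the cap $q$ at my disposal,'' but any corner-turning gadget needs both a cap \emph{and} a cup: something in $P(0,m)$ must supply the new last leg. Since $q^*$ is not assumed to be available, you have to say what plays the role of the cup, and your proposal does not. The paper's resolution is to use tensor powers of $p$ itself: it sets
\[
p' \;=\; \bigl(q^{\otimes l}\otimes \idpart^{\otimes l}\bigr)\,\bigl(\idpart^{\otimes (kl-1)}\otimes p\otimes\idpart\bigr)\, p^{\otimes k},
\]
so that $p^{\otimes k}\in P(0,kl)\cap\CC$ furnishes the cup and $q^{\otimes l}\in P(kl,0)\cap\CC$ the matching cap. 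The first $kl-1$ legs of $p^{\otimes k}$ together with the first leg of the inner copy of $p$ are absorbed by $q^{\otimes l}$; legs $2,\ldots,l$ of that inner $p$ become output legs $1,\ldots,l-1$; and the last leg of $p^{\otimes k}$ survives as the new last output leg. This is a missing idea, not merely missing bookkeeping.

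A final remark: the paper's construction produces \emph{one specific} weakly rotated version of $p$ (not every conceivable attachment of the freed leg), which is in fact all that is ever used downstream. Your claim that ``by choosing the auxiliary data in the gadget appropriately one reaches every admissible $p'$'' would require a separate argument.
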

\begin{proof}
(a) The following partition $p'$ is a weakly line rotated version of $p$.
\newsavebox{\boxweaki}
   \savebox{\boxweaki}
   { \begin{picture}(9,3)
      \put(4.25,1.5){$p^{\otimes k}$}
      \put(0,1){\line(0,1){1.5}}
      \put(9.5,1){\line(0,1){1.5}}
      \put(0,1){\line(1,0){9.5}}    
      \put(0,2.5){\line(1,0){9.5}}          
      \put(-1,0){\upparti{1}{1}}
      \put(1.2,0){$\cdots$}
      \put(-1,0){\upparti{1}{4}}      
      \put(-1,0){\upparti{1}{10}}      
     \end{picture}}     
\newsavebox{\boxweakii}
   \savebox{\boxweakii}
   { \begin{picture}(4,3)
      \put(2,1.5){$p$}      
      \put(0,1){\line(0,1){1.5}}
      \put(4.5,1){\line(0,1){1.5}}
      \put(0,1){\line(1,0){4.5}}    
      \put(0,2.5){\line(1,0){4.5}}          
      \put(-1,0){\upparti{1}{1}}
      \put(-1,0){\upparti{1}{2}}      
      \put(2.2,0){$\cdots$}
      \put(-1,0){\upparti{1}{5}}      
     \end{picture}}     
\newsavebox{\boxweakiii}
   \savebox{\boxweakiii}
   { \begin{picture}(4,3)
      \put(2,1){$q^{\otimes l}$}         
      \put(0,0.5){\line(0,1){1.5}}
      \put(4.5,0.5){\line(0,1){1.5}}
      \put(0,0.5){\line(1,0){4.5}}    
      \put(0,2){\line(1,0){4.5}}          
      \put(-1,2){\upparti{1}{1}}
      \put(1.2,2.5){$\cdots$}
      \put(-1,2){\upparti{1}{4}}      
      \put(-1,2){\upparti{1}{5}}      
     \end{picture}}     
\begin{center}
\begin{picture}(28,10)
 \put(0,3){$p':=\left(q^{\otimes l}\otimes\idpart^{\otimes l}\right)\left(\idpart^{\otimes (kl-1)}\otimes p\otimes\idpart\right)p^{\otimes k}=$}
 \put(17.7,7){\usebox{\boxweaki}}
 \put(18.05,6.65){$\circ$}
 \put(21.05,6.65){$\circ$} 
 \put(27.05,6.65){$\circ$} 
 \put(17,3.7){\upparti{3}{1}}
 \put(19.2,5){$\cdots$}
 \put(17,3.7){\upparti{3}{4}}
 \put(21.7,3.7){\usebox{\boxweakii}}
 \put(17,3.7){\upparti{3}{10}}
 \put(18.05,3.35){$\circ$}
 \put(21.05,3.35){$\circ$} 
 \put(22.05,3.35){$\circ$} 
 \put(23.05,3.35){$\circ$}   
 \put(26.05,3.35){$\circ$} 
 \put(27.05,3.35){$\circ$} 
 \put(17.7,0.4){\usebox{\boxweakiii}}
 \put(17,0.4){\upparti{3}{6}}
 \put(24.2,1.5){$\cdots$}
 \put(17,0.4){\upparti{3}{9}} 
 \put(17,0.4){\upparti{3}{10}} 
 \put(23.05,0.05){$\circ$}   
 \put(26.05,0.05){$\circ$} 
 \put(27.05,0.05){$\circ$} 
\end{picture}
\end{center}
It is in $\CC$, since $\CC$ is closed under tensor products and compositions. 

(b) Since $q^*\neq\singleton^{\otimes k}$, there are two points $1\leq s<t\leq k$ which are in the same block of $q^*$. By weak line rotation, we may assume $t=k$. Replacing $p^{\otimes k}$ by $q^*$ and $q^{\otimes l}$ by $q$ in the construction of $p'$ in (a), we infer that the partition
\[p':=\left(q\otimes\idpart^{\otimes l}\right)\left(\idpart^{\otimes (k-1)}\otimes p\otimes\idpart\right)q^*\]
is a line rotated version of $p$, which is in $\CC$. Note that as the last point of $q$ is connected to some point $s$, which in turn is connected by $q^*$ to the very right point of the construction, we ensure that the first point of $p$ is not disconnected from the other points of $p$ throughout the procedure.
\end{proof}

Note that if $\CC$ is a Banica-Speicher category of partitions, the partition $q=\paarpart$ does the job in Lemma \ref{LemWeakRot}(b). See also \cite[App. B]{PartI} for more on rotations of partitions in Banica-Speicher categories.

\subsection{Doubling}
\label{SectDoubling}

Given a partition $p\in P(0,l)$ and a number $1\leq s\leq l$, we call the partition
\newsavebox{\boxdoubli}
   \savebox{\boxdoubli}
   { \begin{picture}(7,3)
      \put(3.5,1.5){$p$}
      \put(-0.2,0.2){1}
      \put(2.8,0.2){$s$}      
      \put(0,1){\line(0,1){1.5}}
      \put(7.5,1){\line(0,1){1.5}}
      \put(0,1){\line(1,0){7.5}}    
      \put(0,2.5){\line(1,0){7.5}}          
      \put(-1,0){\upparti{1}{1}}
      \put(1.2,0){$\cdots$}
      \put(-1,0){\upparti{1}{4}}      
      \put(-1,0){\upparti{1}{5}}  
      \put(5.2,0){$\cdots$}      
      \put(-1,0){\upparti{1}{8}}      
     \end{picture}}     
\newsavebox{\boxdoublii}
   \savebox{\boxdoublii}
   { \begin{picture}(7,3)
      \put(3.5,1){$\tilde p^*$}  
      \put(4.4,2.2){$s$}       
      \put(7.4,2.2){1}             
      \put(0,0.5){\line(0,1){1.5}}
      \put(7.5,0.5){\line(0,1){1.5}}
      \put(0,0.5){\line(1,0){7.5}}    
      \put(0,2){\line(1,0){7.5}}          
      \put(-1,2){\upparti{1}{1}}
      \put(1.2,2.5){$\cdots$}
      \put(-1,2){\upparti{1}{4}}      
      \put(-1,2){\upparti{1}{5}} 
      \put(5.2,2.5){$\cdots$}           
      \put(-1,2){\upparti{1}{8}} 
     \end{picture}}     
\begin{center}
\begin{picture}(24,7)
 \put(0,3){$q:=\left(\idpart^{\otimes s}\otimes \tilde p^*\right)\left(p\otimes\idpart^{\otimes s}\right)=$}
 \put(12.7,4){\usebox{\boxdoubli}}
 \put(13.05,3.6){$\circ$}
 \put(16.05,3.6){$\circ$} 
 \put(17.05,3.6){$\circ$} 
 \put(20.05,3.6){$\circ$}  
 \put(21.05,3.6){$\circ$}  
 \put(24.05,3.6){$\circ$}  
 \put(12,3.95){\upparti{3}{9}} 
 \put(12,3.95){\upparti{3}{12}}  
 \put(14.2,2){$\cdots$}
 \put(21.05,6.8){$\circ$}  
 \put(24.05,6.8){$\circ$}  
 \put(12,0.7){\upparti{3}{1}}
 \put(12,0.7){\upparti{3}{4}} 
 \put(16.7,0.7){\usebox{\boxdoublii}}
 \put(13.05,0.35){$\circ$}
 \put(16.05,0.35){$\circ$} 
\end{picture}
\end{center}

the \emph{shifted doubling (on $s$ legs) of} $p$.

\begin{lem}\label{LemWeakDoubling}
Let $\CC\subset P$ be a generalized category of partitions and let $p\in P(0,l)$. If $p,p^*\in \CC$, then all shifted doublings of $p$ are in $\CC$, for all $1\leq s\leq l$.
\end{lem}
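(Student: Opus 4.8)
The plan is to read off from the definition of the shifted doubling that
\[q=\left(\idpart^{\otimes s}\otimes \tilde p^*\right)\left(p\otimes\idpart^{\otimes s}\right)\]
is assembled entirely from the three building blocks $p$, $\tilde p^*$ and $\idpart$ using only the tensor product and a single composition. Since a generalized category of partitions is by definition closed under tensor products and compositions and contains $\idpart$, the whole argument reduces to checking that each of these three blocks already lies in $\CC$; everything else is automatic.

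Two of the blocks are immediate: $p\in\CC$ holds by hypothesis, and $\idpart\in\CC$ by Definition \ref{DefCateg}(a), whence $\idpart^{\otimes s}\in\CC$ for every $s$ by closure under tensor products. The one genuinely nontrivial point — and the place where the hypothesis $p^*\in\CC$ is needed rather than merely $p\in\CC$ — is to produce $\tilde p^*\in\CC$. Here I would use that the vertical reflection (reflection across the vertical axis) and the involution (reflection across the horizontal axis) commute, since performing one after the other amounts in either order to the $180^\circ$ point reflection. Consequently $\tilde p^*=\widetilde{p^*}$, the vertical reflection of $p^*$; as $p^*\in\CC$ and $\CC$ is closed under vertical reflection, this gives $\tilde p^*\in\CC$. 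Note that one cannot instead write $\tilde p^*=(\tilde p)^*$ and try to obtain it from $\tilde p\in\CC$, because a generalized category need not be closed under the involution — this is precisely why $p^*\in\CC$ is assumed and not just $p\in\CC$.

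It then remains to assemble the pieces. By closure under tensor products, $p\otimes\idpart^{\otimes s}\in\CC$ and $\idpart^{\otimes s}\otimes\tilde p^*\in\CC$. These two partitions have matching middle point counts (both equal to $l+s$), so their composition is defined, and closure under composition yields $q\in\CC$. Since $s$ was arbitrary with $1\le s\le l$, all shifted doublings of $p$ lie in $\CC$. I do not expect any real obstacle here: the combinatorial content of the lemma is packaged into the definition of the doubling itself, and the only subtlety is the commutation of vertical reflection and involution that converts the hypothesis $p^*\in\CC$ into the required $\tilde p^*\in\CC$.
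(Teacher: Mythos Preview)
Your proof is correct and follows exactly the paper's approach: the paper's proof simply says ``Clear from the construction. Note that $\widetilde{(p^*)}=(\tilde p)^*$,'' and you have spelled out precisely this argument, including the key commutation of vertical reflection and involution that converts the hypothesis $p^*\in\CC$ into $\tilde p^*\in\CC$.
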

\begin{proof}
Clear from the construction. Note that $\widetilde{(p^*)}=(\tilde p)^*$.
\end{proof}

The following construction of $q\in P(s,s)$  is called the \emph{partial doubling of $p\in P(0,l)$ (on its first $s$ legs)}.
\newsavebox{\boxpartdoublii}
   \savebox{\boxpartdoublii}
   { \begin{picture}(7,3)
      \put(3.5,1){$p^*$}  
      \put(-0.2,2.2){1}             
      \put(2.8,2.2){$s$}       
      \put(0,0.5){\line(0,1){1.5}}
      \put(7.5,0.5){\line(0,1){1.5}}
      \put(0,0.5){\line(1,0){7.5}}    
      \put(0,2){\line(1,0){7.5}}          
      \put(-1,2){\upparti{1}{1}}
      \put(1.2,2.5){$\cdots$}
      \put(-1,2){\upparti{1}{4}}      
      \put(-1,2){\upparti{1}{5}} 
      \put(5.2,2.5){$\cdots$}           
      \put(-1,2){\upparti{1}{8}} 
     \end{picture}}     
\begin{center}
\begin{picture}(16,14)
 \put(0,7){$q:=$}
 \put(2,10.15){\upparti{3}{1}}
 \put(2,10.15){\upparti{3}{4}} 
 \put(2,10.15){\uppartii{2}{5}{12}} 
 \put(2,10.15){\uppartii{1}{8}{9}}  
 \put(2.7,6.8){\usebox{\boxpartdoublii}}
 \put(2.7,3.8){\usebox{\boxdoubli}} 
 \put(2,3.8){\upparti{6}{9}}
 \put(2,3.8){\upparti{6}{12}}
 \put(2,4.5){\parti{3}{1}}
 \put(2,4.5){\parti{3}{4}} 
 \put(2,4.5){\partii{2}{5}{12}} 
 \put(2,4.5){\partii{1}{8}{9}}  
 \put(3.05,0.15){$\circ$}
 \put(6.05,0.15){$\circ$} 
 \put(3.05,3.4){$\circ$} 
 \put(6.05,3.4){$\circ$} 
 \put(7.05,3.4){$\circ$} 
 \put(10.05,3.4){$\circ$} 
 \put(11.05,3.4){$\circ$} 
 \put(14.05,3.4){$\circ$} 
 \put(3.05,9.7){$\circ$} 
 \put(6.05,9.7){$\circ$} 
 \put(7.05,9.7){$\circ$} 
 \put(10.05,9.7){$\circ$} 
 \put(11.05,9.7){$\circ$} 
 \put(14.05,9.7){$\circ$}  
 \put(3.05,13){$\circ$}
 \put(6.05,13){$\circ$} 
 \put(4.2,1.5){$\cdots$}
 \put(4.2,11.2){$\cdots$}
 \put(12.2,6.5){$\cdots$} 
 \put(7.5,0.5){${\mult(\baarpart,1,l-s)}$}
 \put(7.5,12.5){${\mult(\paarpart,1,l-s)}$}
\end{picture}
\end{center}

The crucial point is, that if a point $1\leq r\leq s$ of $p$ is connected to some point $s<t\leq l$, then the lower $r$-th point of $q$ is connected to the upper $r$-the point of $q$.

\begin{lem}\label{LemPartDoubling}
Let $\CC\subset P$ be a generalized category of partitions and let $p\in P(0,l)$ with $p\neq \singleton^{\otimes l}$. If $p,p^*\in\CC$, then all partial doublings of $p$ are in $\CC$, for all $1\leq s\leq l$.
\end{lem}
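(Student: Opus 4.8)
The plan is to read the partition $q$ off its defining diagram as an iterated composition of tensor products of partitions that already lie in $\CC$, and then to invoke closure of a generalized category under $\otimes$ and under composition. Reading top to bottom, $q$ splits into four horizontal layers: a cap layer $\idpart^{\otimes s}\otimes\mult(\paarpart,1,l-s)$ routing the last $l-s$ legs; a copy of $p^*$; a copy of $p$; and a cup layer $\idpart^{\otimes s}\otimes\mult(\baarpart,1,l-s)$. Once each layer is shown to lie in $\CC$, the composite $q$ does too.

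First I would dispose of the easy ingredients. By hypothesis $p,p^*\in\CC$; since $\CC$ is closed under vertical reflection we also get $\tilde p\in\CC$ and $\tilde p^*=\widetilde{(p^*)}\in\CC$ (exactly as noted for Lemma \ref{LemWeakDoubling}); and $\idpart\in\CC$, hence $\idpart^{\otimes s}\in\CC$. The two rainbow turnarounds $\mult(\paarpart,1,l-s)$ and $\mult(\baarpart,1,l-s)$ are the only factors not immediately at hand, and realising them inside $\CC$ is the crux.

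The hard part will be producing these rainbows without assuming $\paarpart,\baarpart\in\CC$, and this is precisely where the hypothesis $p\neq\singleton^{\otimes l}$ is forced on us. Since $p$ is not the all-singletons partition, some block of $p$ contains two distinct legs, giving a genuine cup (and, in $p^*$, a genuine cap). I would grow this single cup into the full rainbow by iterating the nesting operation of Lemma \ref{LemBasicOper}(a) — that is, through the $\mult$ construction applied to $p$ and $p^*$ together with the shifted doublings of Lemma \ref{LemWeakDoubling} — rather than by importing the pair partitions directly. Conceptually, a nontrivial block is exactly what lets strings be turned around inside $\CC$; were $p$ all singletons, the construction would instead force the all-singletons partition of $P(s,s)$ into $\CC$, which need not be available, so the hypothesis is not merely technical.

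It then remains to check the asserted connectivity by tracing paths through the diagram. If a leg $r\le s$ of $p$ shares a block with some leg $t>s$, then the point of $p^*$ sitting above position $r$ is linked, via the rainbow routing of the internal legs, down to the point of $p$ at position $r$, which produces the through-string joining the upper and lower $r$-th points of $q$; a leg $r\le s$ meeting no leg beyond the $s$-th stays inside a boundary block and contributes no such string. The only delicate bookkeeping is that the rainbow must send internal position $s+j$ at the top to internal position $s+j$ at the bottom, so that the block pattern of $p$ is copied faithfully and the through-strings appear at the correct indices.
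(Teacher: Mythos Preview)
Your decomposition of $q$ into four layers is correct, and you rightly locate the difficulty in the two rainbow factors $\mult(\paarpart,1,l-s)$ and $\mult(\baarpart,1,l-s)$. However, your stated plan --- to \emph{realise these rainbows inside} $\CC$ by growing a single cup via nestings and shifted doublings --- cannot succeed in general. Take $\CC=NC_{[3]}$ (Proposition~\ref{PropExNonBSCateg}) and $p=b_3$: every block of a rainbow of pairs has $0$ upper and $2$ lower points, and $0-2=-2\notin 3\Z$, so $\mult(\paarpart,1,m)\notin NC_{[3]}$ for any $m\geq 1$. No combination of nestings or shifted doublings starting from $b_3$ will manufacture a pair partition in this category. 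Your intuition that ``a nontrivial block is exactly what lets strings be turned around'' is the right one, but the conclusion it supports is not that the pair-rainbow lies in $\CC$; it is that the pair-rainbow can be \emph{replaced} by something in $\CC$ that performs the same routing.

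The paper does exactly this. First, since $p\neq\singleton^{\otimes l}$, one uses Lemma~\ref{LemWeakRot}(a) to obtain a weakly line-rotated $p'\in\CC\cap P(0,l)$ whose \emph{first} leg is not a singleton --- a preparatory step you do not mention, but which is essential for nesting at position $1$. One then substitutes $\mult(p',1,l-s)$ for $\mult(\paarpart,1,l-s)$ and $\mult(p',1,l-s)^*$ for $\mult(\baarpart,1,l-s)$, enlarging the identity tensors in the middle two layers to match. In $\mult(p',1,l-s)$ the first $l-s$ legs are precisely the first legs of the $l-s$ nested copies of $p'$; because the first leg of $p'$ sits in a nontrivial block, each of these is connected within its copy to points beyond position $l-s$, which travel through the identity strands to the mirror points of $\mult(p',1,l-s)^*$ on the other side. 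This guarantees that leg $s+j$ of $p^*$ meets leg $s+j$ of $p$ while the remaining legs of the $p'$-copies close up into loops not touching the boundary, so the resulting partition in $P(s,s)$ coincides with the partial doubling $q$. The connectivity check in your final paragraph then goes through verbatim --- but for the substitute turnaround, not the pair-rainbow.
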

\begin{proof}
By weak line rotation (Lemma \ref{LemWeakRot}), we obtain a partition $p'\in P(0,l)$ whose first point is not a singleton. The proof is then clear from the partial doubling construction with $\mult(p',1,l-s)$ playing the role of $\mult(\paarpart,1,l-s)$, while $\mult(p'^*,1,l-s)$ plays the role of $\mult(\baarpart,1,l-s)$.
\end{proof}

\begin{lem}\label{LemPartDoublingProj}
Let $q\in P(s,s)$ be a partition obtained from partial doubling. Then $q=q^*=qq$.
\end{lem}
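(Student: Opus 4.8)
The plan is to read off the block structure of the partial doubling $q\in P(s,s)$ directly from its construction and then to verify both identities combinatorially. Writing $U_1,\dots,U_s$ for the upper and $L_1,\dots,L_s$ for the lower points of $q$, I would first argue that the blocks of $q$ are governed by the blocks of $p$ restricted to its first $s$ legs, as follows. Every block $B$ of $p$ with $B\subseteq\{1,\dots,s\}$ contributes two separate blocks of $q$, namely $\{U_i\mid i\in B\}$ and $\{L_i\mid i\in B\}$; every block $B$ of $p$ meeting $\{s+1,\dots,l\}$ contributes the single block $\{U_i,L_i\mid i\in B\cap\{1,\dots,s\}\}$; and every block of $p$ contained in $\{s+1,\dots,l\}$ contributes nothing. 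This is precisely the content of the ``crucial point'' stated before the lemma, upgraded from single legs to whole blocks: the nested pair partitions $\mult(\paarpart,1,l-s)$ and $\mult(\baarpart,1,l-s)$ together with the copies of $p$ and $p^*$ transport a connection of a leg $r\le s$ to a leg $t>s$ into a connection between $U_r$ and $L_r$, and more generally merge the upper and lower copies of any block that reaches beyond the $s$-th leg.

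Given this description, $q=q^*$ is immediate, since the involution reflects $q$ at the horizontal axis and thereby interchanges the roles of $U_i$ and $L_i$. A block of the first type, $\{U_i\mid i\in B\}$, is swapped with $\{L_i\mid i\in B\}$, so the pair is preserved; a block of the second type, $\{U_i,L_i\mid i\in B\cap\{1,\dots,s\}\}$, is invariant under the swap. Hence the block structure is unchanged and $q=q^*$. Alternatively one can see this from the construction itself, since reflecting the vertical stack turns $\mult(\baarpart,1,l-s)$ into $\mult(\paarpart,1,l-s)$, turns the $p$-layer into the $p^*$-layer and vice versa, and thus reproduces the same diagram.

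For $q=qq$ I would stack a copy of $q$ on top of another copy, identify the lower points of the upper copy with the upper points of the lower copy (call them $M_1,\dots,M_s$), and trace out the middle row, checking the outcome block by block. For a block $B\subseteq\{1,\dots,s\}$ of $p$ the upper copy contributes $\{U_i\mid i\in B\}$ and $\{M_i\mid i\in B\}$, while the lower copy contributes $\{M_i\mid i\in B\}$ and $\{L_i\mid i\in B\}$; identifying the middle points closes $\{M_i\mid i\in B\}$ into a loop and leaves $\{U_i\mid i\in B\}$ and $\{L_i\mid i\in B\}$ as two disjoint blocks, exactly as in $q$. For a block $B$ meeting $\{s+1,\dots,l\}$, writing $B'=B\cap\{1,\dots,s\}$, the upper copy contributes the single block $\{U_i,M_i\mid i\in B'\}$ and the lower copy the single block $\{M_i,L_i\mid i\in B'\}$; these share the points $M_i$ ($i\in B'$), so they merge, and after removing the middle row one is left with the single block $\{U_i,L_i\mid i\in B'\}$, again exactly as in $q$. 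Since the removed loops only produce scalar factors for the associated maps and do not affect the underlying partition, we conclude $qq=q$.

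The step I expect to be the main obstacle is the very first one: pinning down the block structure of $q$ rigorously, in particular the connections between the upper and the lower row. One has to follow a leg $r\le s$ through the copy of $p$ in the doubling layer, through the nested pair partitions that close off the legs beyond $s$, and through the copy of $p^*$, and to check that two legs $r,r'\le s$ lying in a common block of $p$ that reaches past the $s$-th leg really do end up in a common block of $q$ across both rows, while blocks staying within the first $s$ legs stay split into an upper and a lower copy. Once this bookkeeping is carried out, the two identities $q=q^*$ and $q=qq$ follow from the purely formal block computations above.
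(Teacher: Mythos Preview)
Your proof is correct and is precisely the detailed unpacking of what the paper dismisses in one line as ``Clear from the construction.'' Your description of the block structure of $q$ (blocks of $p$ contained in $\{1,\dots,s\}$ split into an upper and a lower copy, blocks reaching past $s$ get their upper and lower copies merged) is exactly the content of the construction, and from there the verifications of $q=q^*$ and $qq=q$ are routine, as you show.
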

\begin{proof}
Clear from the construction. 
\end{proof}

\begin{lem}\label{LemProjPart}
Let $q\in P(s,s)$ be a partition such that $q=q^*=qq$. Then, if an upper point $i$ and a lower point $j$ are in the same block $V$, then also the lower point $i$ and the upper point $j$ are in $V$.
\end{lem}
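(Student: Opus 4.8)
The plan is to read off two separate structural facts, one from $q=q^*$ and one from $q=qq$, and then combine them. Throughout I write the $s$ upper points of $q$ as $\bar 1,\dots,\bar s$ and the $s$ lower points as $\underline 1,\dots,\underline s$, and I write $x\sim y$ to mean that $x$ and $y$ lie in a common block of $q$. We are given $\bar i\sim\underline j$ with common block $V$, and must show $\underline i\in V$ and $\bar j\in V$.

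First I would exploit $q=q^*$. Since the involution is the reflection in the horizontal axis, it acts on points by the position-preserving flip $\sigma\colon \bar k\leftrightarrow\underline k$, and $q=q^*$ says precisely that $\sigma$ is an automorphism of the block structure of $q$. Applying $\sigma$ to the hypothesis $\bar i\sim\underline j$ yields $\underline i\sim\bar j$; call their common block $V'$. This already places $\underline i$ and $\bar j$ in one block, but to finish I must prove $V'=V$. Self-adjointness alone cannot give this (one can write down a self-adjoint but non-idempotent $q$, e.g. the blocks $\{\bar 1,\underline 2\},\{\underline 1,\bar 2\}$, for which $V'\neq V$), so the idempotency $q=qq$ must enter.

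Next I would analyze the composite $qq$. Writing $qq$ as a top copy of $q$ (upper points $U_a$, lower points $M_b$) stacked over a bottom copy (upper points $M_a$, lower points $L_b$), where the $M_b$ are the identified middle row, two outer points lie in the same block of $qq$ exactly when they are joined by a path alternating between the two copies through the shared middle points (closed middle loops do not affect the resulting set partition). The relation $\bar i\sim\underline j$ gives $U_i\sim M_j$ in the top copy, while the relation $\underline i\sim\bar j$ from the previous step gives $L_i\sim M_j$ in the bottom copy; concatenating through $M_j$ shows that $U_i$ and $L_i$ lie in a common block of $qq$. Since $qq=q$, this reads back as $\bar i\sim\underline i$ in $q$.

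Finally I would combine the pieces: from $\bar i\sim\underline i$ together with $\bar i\in V$ we get $\underline i\in V$, and then $\underline i\sim\bar j$ together with $\underline i\in V$ gives $\bar j\in V$, which is exactly the claim. The only delicate point is the bookkeeping in the middle step: one must keep straight which position in each copy is upper, lower, or middle, and confirm that the connection $U_i\sim L_i$ genuinely survives the deletion of the middle row. Once the path through $M_j$ is identified this is immediate, so I expect no real obstacle beyond this combinatorial care.
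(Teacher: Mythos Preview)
Your proof is correct and follows essentially the same route as the paper's: use $q=q^*$ to get $\underline i\sim\bar j$ from $\bar i\sim\underline j$, then use $q=qq$ (tracing through the middle point $M_j$) to obtain $\bar i\sim\underline i$, and finally combine to place all four points in one block. Your write-up is more explicit about the composition step and includes the helpful counterexample showing why $q=q^*$ alone is insufficient, but the argument is the same.
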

\begin{proof}
Using $q=q^*$, check that if an upper point $i$ is connected to a lower point $j$, then also the upper point $j$ is connected to the lower point $i$. By $q=qq$, this yields that the upper point $i$ is connected to the lower point $i$, while the upper point $j$ is connected to the lower point $j$. This yields that all four points (upper $i$ and $j$ as well as lower $i$ and $j$) are in the same block. See also \cite{FW} for such an argument.
\end{proof}

The partial doubling procedure  may be extended a bit: Let $p_1\in P(0,l)$ and $p_2\in P(k,0)$ and let $q\in P(s,s)$. Let $1\leq a< b\leq s$. If $\CC\subset P$ is a generalized category of partitions and if $p_1,p_2,q\in \CC$, then the following \emph{weak restriction of} $q$ is in $\CC\cap P(b-a,b-a)$:

\[\left(p_2^{\otimes l}\otimes \idpart^{\otimes (b-a)}\otimes p_2^{\otimes l}\right)\left(\idpart^{\otimes(kl-a)}\otimes q\otimes\idpart^{\otimes(kl-(s-b))}\right)\left(p_1^{\otimes k}\otimes\idpart^{\otimes (b-a)}\otimes p_1^{\otimes k}\right)\]
\newsavebox{\boxwresti}
   \savebox{\boxwresti}
   { \begin{picture}(7,3)
      \put(3.5,1.5){$p_1^{\otimes k}$}
      \put(0,1){\line(0,1){1.5}}
      \put(7.5,1){\line(0,1){1.5}}
      \put(0,1){\line(1,0){7.5}}    
      \put(0,2.5){\line(1,0){7.5}}          
      \put(-1,0){\upparti{1}{1}}
      \put(1.2,0){$\cdots$}
      \put(-1,0){\upparti{1}{4}}      
      \put(-1,0){\upparti{1}{5}}  
      \put(5.2,0){$\cdots$}      
      \put(-1,0){\upparti{1}{8}}      
     \end{picture}}  
\newsavebox{\boxwrestii}
   \savebox{\boxwrestii}
   { \begin{picture}(7,3)
      \put(3.5,1){$p_2^{\otimes l}$}  
      \put(0,0.5){\line(0,1){1.5}}
      \put(7.5,0.5){\line(0,1){1.5}}
      \put(0,0.5){\line(1,0){7.5}}    
      \put(0,2){\line(1,0){7.5}}          
      \put(-1,2){\upparti{1}{1}}
      \put(1.2,2.5){$\cdots$}
      \put(-1,2){\upparti{1}{4}}      
      \put(-1,2){\upparti{1}{5}} 
      \put(5.2,2.5){$\cdots$}           
      \put(-1,2){\upparti{1}{8}} 
     \end{picture}}     
\newsavebox{\boxwrestiii}
   \savebox{\boxwrestiii}
   { \begin{picture}(12,3)
      \put(5.5,1.3){$q$}  
      \put(0,1){\line(0,1){1}}
      \put(11.5,1){\line(0,1){1}}
      \put(0,1){\line(1,0){11.5}}    
      \put(0,2){\line(1,0){11.5}}          
      \put(-1,2){\upparti{1}{1}}
      \put(-1,2){\upparti{1}{4}}      
      \put(-1,2){\upparti{1}{5}} 
      \put(-1,2){\upparti{1}{8}} 
      \put(-1,2){\upparti{1}{9}} 
      \put(-1,2){\upparti{1}{12}} 
      \put(-1,0){\upparti{1}{1}}
      \put(-1,0){\upparti{1}{4}}      
      \put(-1,0){\upparti{1}{5}} 
      \put(-1,0){\upparti{1}{8}} 
      \put(-1,0){\upparti{1}{9}} 
      \put(-1,0){\upparti{1}{12}} 
      \put(0.4,2.2){$_1$}
      \put(3.4,2.2){$_a$}
      \put(4.4,2.2){$_{a+1}$}
      \put(7.4,2.2){$_b$}
      \put(8.4,2.2){$_{b+1}$}
      \put(11.4,2.2){$_s$}                              
     \end{picture}}        
\begin{center}
\begin{picture}(22,10)
 \put(0,5){$=$}
 \put(10.35,9.85){$\circ$}
 \put(13.35,9.85){$\circ$}
 \put(2,7){\usebox{\boxwresti}}
 \put(1.25,7){\upparti{3}{9}}
 \put(11.5,8.5){$\cdots$}
 \put(1.25,7){\upparti{3}{12}}
 \put(14,7){\usebox{\boxwresti}}
 \put(2.35,6.65){$\circ$}
 \put(5.35,6.65){$\circ$}
 \put(6.35,6.65){$\circ$}
 \put(9.35,6.65){$\circ$}
 \put(10.35,6.65){$\circ$}
 \put(13.35,6.65){$\circ$}
 \put(14.35,6.65){$\circ$}
 \put(17.35,6.65){$\circ$}
 \put(18.35,6.65){$\circ$}
 \put(21.35,6.65){$\circ$}
 \put(1.25,3.7){\upparti{3}{1}}
 \put(3.5,5){$\cdots$}
 \put(1.25,3.7){\upparti{3}{4}}
 \put(6,3.7){\usebox{\boxwrestiii}}
 \put(1.25,3.7){\upparti{3}{17}}
 \put(20.2,5){$\cdots$} 
 \put(1.25,3.7){\upparti{3}{20}} 
 \put(2.35,3.35){$\circ$}
 \put(5.35,3.35){$\circ$}
 \put(6.35,3.35){$\circ$}
 \put(9.35,3.35){$\circ$}
 \put(10.35,3.35){$\circ$}
 \put(13.35,3.35){$\circ$}
 \put(14.35,3.35){$\circ$}
 \put(17.35,3.35){$\circ$}
 \put(18.35,3.35){$\circ$}
 \put(21.35,3.35){$\circ$}   
 \put(2,0.4){\usebox{\boxwrestii}}
 \put(1.25,0.4){\upparti{3}{9}}
 \put(10.8,1.9){$\cdots$}
 \put(1.25,0.4){\upparti{3}{12}} 
 \put(14,0.4){\usebox{\boxwrestii}} 
 \put(10.35,0.05){$\circ$}
 \put(13.35,0.05){$\circ$}
\end{picture}
\end{center}

\subsection{An example of a generalized category of partitions}
\label{SectExNonBS}

Let $m\in\N$. Let $NC_{[m]}\subset P$ be the set of all noncrossing partitions $p$ satisfying the following \emph{grading condition} on the block structure: If a block of $p$ contains $a$ upper and $b$ lower points, then $a-b\in m\Z$. For $m\geq 3$, these sets are categories in the sense of Definition \ref{DefCateg}(a) but not in the sense of Definition \ref{DefCateg}(b), see Proposition \ref{PropExNonBSCateg}. By $b_m\in P(0,m)$ we denote the partition consisting in a single block on $m$ lower points:
\[b_1=\singleton,\qquad b_2=\paarpart,\qquad b_3=\dreipart,\qquad b_4=\vierpart,\ldots\]
In order to prove Proposition \ref{PropExNonBSCateg}, we need the following lemma about a kind of an ``$m$-rotation'' of the upper left point to the lower line.

\begin{lem}
\label{LemExNonBSCateg}
Let $m\in\N$ and let $p\in P(k,l)$, $k\geq 1$. If $p\in NC_{[m]}$, then also $p'\in NC_{[m]}$, where $p'\in P(k-1,l+(m-1))$ is defined as:
%
\newsavebox{\boxmroti}
   \savebox{\boxmroti}
   { \begin{picture}(5,3)
      \put(2,1.5){$b_m$}
      \put(-1,0){\uppartiii{1}{1}{4}{5}}
      \put(1.2,0){$\cdots$}
     \end{picture}}     
\newsavebox{\boxmrotii}
   \savebox{\boxmrotii}
   { \begin{picture}(7,3)
      \put(2.5,1.3){$p$}  
      \put(0,1){\line(0,1){1}}
      \put(5.5,1){\line(0,1){1}}
      \put(0,1){\line(1,0){5.5}}    
      \put(0,2){\line(1,0){5.5}}          
      \put(-1,2){\upparti{1}{1}}
      \put(2.7,2.5){$\cdots$}
      \put(-1,2){\upparti{1}{2}}      
      \put(-1,2){\upparti{1}{6}} 
      \put(-1,0){\upparti{1}{1}}
      \put(2.2,0.2){$\cdots$}
      \put(-1,0){\upparti{1}{2}}      
      \put(-1,0){\upparti{1}{5}} 
     \end{picture}}     
\begin{center}
\begin{picture}(24,7)
 \put(0,3){$p':=\left(\idpart^{\otimes m-1} \otimes p\right)\left(b_m\otimes \idpart^{\otimes k-1}\right)=$}
 \put(14.7,4){\usebox{\boxmroti}}
 \put(15.05,3.6){$\circ$}
 \put(18.05,3.6){$\circ$} 
 \put(19.05,3.6){$\circ$} 
 \put(20.05,3.6){$\circ$}  
 \put(24.05,3.6){$\circ$}  
 \put(14,3.95){\upparti{3}{6}} 
 \put(14,3.95){\upparti{3}{10}}  
 \put(16.2,2){$\cdots$}
 \put(20.05,6.8){$\circ$}  
 \put(24.05,6.8){$\circ$}  
 \put(14,0.7){\upparti{3}{1}}
 \put(14,0.7){\upparti{3}{4}} 
 \put(18.7,0.7){\usebox{\boxmrotii}}
 \put(15.05,0.35){$\circ$}
 \put(18.05,0.35){$\circ$} 
 \put(19.05,0.35){$\circ$} 
 \put(20.05,0.35){$\circ$}  
 \put(23.05,0.35){$\circ$}  
\end{picture}
\end{center}
\end{lem}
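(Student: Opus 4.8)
The plan is to verify the two defining properties of $NC_{[m]}$ separately: that $p'$ is noncrossing, and that $p'$ satisfies the grading condition. First I would record the effect of the construction on the block structure of $p$. Writing $B:=b_m\otimes\idpart^{\otimes k-1}\in P(k-1,m+k-1)$ on top and $A:=\idpart^{\otimes m-1}\otimes p\in P(m+k-1,m+l-1)$ below, I trace through the composition $p'=AB$. The $k-1$ upper points of $B$ are its identity strands, and their lower ends meet the upper points $2,\ldots,k$ of $p$ inside $A$; hence the upper points of $p'$ are exactly the former upper points $2,\ldots,k$ of $p$. The single block $b_m$ has its $m$ lower points connected, and in the composition these meet the $m-1$ left identity strands of $A$ together with the upper point $1$ of $p$. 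Consequently the upper point $1$ is ``rotated down'': it is replaced by the $m-1$ new leftmost lower points of $p'$, all of which are merged into the block that contained upper point $1$. The remaining $l$ lower points of $p'$ are the lower points of $p$, unchanged.

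For the noncrossing property I would argue structurally. Both $A$ and $B$ are tensor products of the noncrossing partitions $\idpart$, $p$ and $b_m$, hence noncrossing; since the class $NC$ of all noncrossing partitions is itself a Banica-Speicher category \cite{BS} and in particular closed under composition, the composite $p'=AB$ is again noncrossing. Alternatively one reads this off the picture: the operation is the standard rotation of the leftmost upper point to the lower line, except that the single strand is replaced by a bundle of $m-1$ parallel strands occupying adjacent leftmost positions and lying in one block, which cannot create a crossing.

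It then remains to check the grading condition $a-b\in m\Z$ for every block of $p'$. Let $V$ be a block of $p$ with $a$ upper and $b$ lower points, so $a-b\in m\Z$. If $V$ does not contain the upper point $1$ of $p$, then by the description above it passes to a block of $p'$ with the same numbers $a$ and $b$ of upper and lower points, so its grading is unchanged. If $V$ does contain upper point $1$, then in $p'$ this upper point is deleted while the $m-1$ new lower points are adjoined to $V$; the resulting block of $p'$ has $a-1$ upper points and $b+(m-1)$ lower points, and its grading is $(a-1)-(b+m-1)=(a-b)-m\in m\Z$. In both cases the grading condition survives, and combined with the noncrossing property this gives $p'\in NC_{[m]}$.

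The step I expect to require the most care is the bookkeeping of the first paragraph, namely identifying correctly through the composition which points of $p$ survive as upper points of $p'$ and which block absorbs the $m-1$ new lower points, since both the grading count and the noncrossing picture downstream depend on getting this block-tracing right. Once it is pinned down, the noncrossing claim is essentially a citation (closure of $NC$ under composition) and the grading reduces to the one-line count above; the only mild point to keep in mind is that for $m\geq 2$ the grading condition forces upper point $1$ never to be a singleton of $p$, so the affected block is genuinely present, whereas the degenerate case $m=1$ (where no new lower points are added) is covered by the same argument.
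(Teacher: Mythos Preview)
Your proof is correct and follows essentially the same approach as the paper: identify that only the block containing the upper left point of $p$ changes, and check the grading arithmetic $(a-1)-(b+m-1)=(a-b)-m\in m\Z$. You are in fact more thorough than the paper, which records only this one-line count and leaves both the noncrossing property and the block-tracing implicit.
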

\begin{proof}
Let $V_1,\ldots, V_l$ be the blocks of $p$ and suppose that $V_1$ contains the upper left point of $p$. Then the blocks of $p'$ are $V_1',V_2,\ldots,V_l$, where $V_1'$ contains the lower left point of $p'$. In fact, $p$ and $p'$ only differ by the blocks $V_1$ and $V_1'$ respectively. Now, suppose that $V_1$ has $a$ upper and $b$ lower points. Then, $V_1'$ has $a-1$ upper and $(m-1)+b$ lower points, and we have
$(a-1)-((m-1)+b)=a-b-m\in m\Z$.
\end{proof}

We may now show that the sets $NC_{[m]}$ are generalized categories of partitions but no Banica-Speicher categories for $m\geq 3$. However, we will see later that the sets $H_n(NC_{[m]})$ \emph{are} Banica-Speicher categories of partitions, see Remark \ref{RemNCm}.

\begin{prop}
\label{PropExNonBSCateg}
The sets $NC_{[m]}$ are generalized categories of partitions for all $m\in\N$; in fact, they are even closed under taking involutions. However, the sets $NC_{[m]}$ are no Banica-Speicher categories for $m\geq 3$. Their generators are:
\[NC_{[1]}=\langle \paarpart,\baarpart,\dreipart\rangle_g,\quad NC_{[2]}=\langle\paarpart,\baarpart,\vierpart\rangle_g,\quad NC_{[m]}=\langle b_m, b_m^*\rangle_g, m\geq 3\]
\end{prop}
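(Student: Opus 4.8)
\emph{Plan.} I would split the statement into the closure properties (generalized category, closed under involution, not Banica--Speicher for $m\ge 3$) and the three generator descriptions, and attack the latter by a rotation-plus-assembly argument.

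\emph{Closure and non-containment.} To each block $V$ of a partition I would attach the defect $\chi(V):=(\#\text{upper points of }V)-(\#\text{lower points of }V)$, so the grading condition reads $\chi(V)\in m\Z$ for all $V$. Then $\idpart\in NC_{[m]}$ since its block has $\chi=0$; the tensor product leaves all blocks untouched; the vertical reflection preserves the upper/lower status of every point; and the involution sends $V$ to a block of defect $-\chi(V)$. Hence all these operations preserve both the grading condition and noncrossingness, which settles every axiom except composition. For $qp$ I would argue that a block $W$ of the composite comes from a connected component $C$ of the diagram glueing the lower row of $p$ to the upper row of $q$, so $C$ is a union of blocks $P_1,\dots,P_s$ of $p$ and $Q_1,\dots,Q_t$ of $q$; every middle point of $C$ is one lower point of some $P_i$ and one upper point of some $Q_j$, hence cancels when one sums the defects, giving $\sum_i\chi(P_i)+\sum_j\chi(Q_j)=\chi(W)$. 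As each summand lies in $m\Z$, so does $\chi(W)$, and noncrossingness of $qp$ is the standard fact that composing noncrossing partitions yields a noncrossing partition. Finally, for $m\ge 3$ the block of $\paarpart$ has defect $-2\notin m\Z$, so $\paarpart\notin NC_{[m]}$ and $NC_{[m]}$ is not a Banica--Speicher category.

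\emph{The generator descriptions.} The inclusion $\langle\cdots\rangle_g\subset NC_{[m]}$ is immediate: each listed generator passes the grading check ($\chi(\paarpart)=-2$, $\chi(\baarpart)=2$, $\chi(b_m)=-m$, $\chi(b_m^*)=m$) and is noncrossing, so it lies in the generalized category $NC_{[m]}$ of the first part. For the reverse inclusion I would first observe that in each case the generating set is invariant under $p\mapsto p^*$; since the involution distributes over tensor product, composition (reversing the order) and vertical reflection, this forces $\langle\cdots\rangle_g$ to be closed under involution as well. Both directions of rotation are then available: Lemma~\ref{LemWeakRot}(b) applies with $q=\baarpart$ when $m\in\{1,2\}$ and with $q=b_m^*$ when $m\ge 3$ (noting $q^*=b_m\neq\singleton^{\otimes m}$), while the ``$m$-rotation'' of Lemma~\ref{LemExNonBSCateg}, built only from $b_m$ and identities, together with its involution, rotates points between the two lines inside $\langle\cdots\rangle_g$. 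Using these rotations I would move all upper points of a given $p\in NC_{[m]}$ down, reducing reversibly to a one-line partition $\hat p\in NC_{[m]}\cap P(0,L)$ whose blocks, by the grading condition, all have a number of legs divisible by $m$.

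\emph{Assembling one-line partitions, and the obstacle.} I would build such $\hat p$ in two steps: lay out its noncrossing block structure by repeatedly nesting copies of $b_m$ (Lemma~\ref{LemBasicOper}(a)), which realizes every block of size exactly $m$ in the correct arrangement, and then enlarge blocks to arbitrary multiples of $m$ by fusing neighbours. The fusion comes from rotating one generator block into a single block carrying through-strings and composing it into the diagram: $\vierpart$ rotates to $\vierpartrot$ (yielding the neighbour-merge of Lemma~\ref{LemBasicOper}(c)) for $m=2$, $\dreipart$ rotates to a single block in $P(2,1)$ merging two legs for $m=1$, and for $m\ge 3$ the block $B_{m-1,m-1}\in P(m-1,m-1)$ obtained from one rotation of $b_m^*$ fuses the blocks meeting its $m-1$ lower legs --- letting all but one of those legs fall inside a single block fuses exactly two neighbours and returns a one-line block. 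The main obstacle is precisely this reverse inclusion: one must verify that the rotated merger really lies in $\langle b_m,b_m^*\rangle_g$ and that overlapping its legs converts the ``merge $m-1$ at once'' move into a genuine ``merge two'' move, and one must organize the noncrossing bookkeeping (e.g.\ by induction on the number of blocks of $\hat p$, peeling off an innermost block and matching each rotation in the reduction by its inverse) so that no crossing is ever created. The grading condition is what makes this possible: it forces every block size to be a multiple of $m$, exactly the sizes that nesting and fusing copies of $b_m$ can produce, so beyond noncrossingness and the grading there is no further obstruction.
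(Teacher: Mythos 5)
Your closure argument is fine; in fact your treatment of composition (summing the defects $\chi$ over the blocks of $p$ and $q$ that form a connected component, with middle points cancelling) is more direct than the paper's proof, which instead proceeds by induction on the number of middle points using the ``$m$-rotation'' of Lemma \ref{LemExNonBSCateg}. The non-containment of $\paarpart$ for $m\geq 3$ and the inclusion $\langle\cdots\rangle_g\subset NC_{[m]}$ are also correct. The problems are in the reverse inclusion, which you yourself flag as the main obstacle. A first, smaller issue: your claim that ``in each case the generating set is invariant under $p\mapsto p^*$'' is false for $m=1,2$, since $\dreipart^*\notin\{\paarpart,\baarpart,\dreipart\}$ and $\vierpart^*\notin\{\paarpart,\baarpart,\vierpart\}$; there, involution-closure of the generated category has to be derived from rotations using $\paarpart$ and $\baarpart$ (the paper sidesteps $m=1,2$ entirely by quoting the known Banica--Speicher classification, and only argues $m\geq 3$ by hand).

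The serious gap is the assembly step for one-line partitions: ``lay out the block structure by nesting copies of $b_m$, then enlarge blocks by fusing neighbours'' does not work, because the fusion move cannot be performed in general. Composing with a shifted copy of your $B_{m-1,m-1}$ merges \emph{all} blocks meeting $m-1$ consecutive legs, so to merge exactly two blocks $A$ and $B$ you need $m-1$ consecutive legs contained in $A\cup B$. Take $m=3$ and the target $\hat p\in NC_{[3]}\cap P(0,21)$ with blocks $W=\{1,5,9,13,17,21\}$ and $\{2,3,4\},\{6,7,8\},\{10,11,12\},\{14,15,16\},\{18,19,20\}$. The legs of $W$ are pairwise non-adjacent, so any two sub-blocks of $W$ in a size-$3$ layout contain no two consecutive legs, and any fusion touching two consecutive legs necessarily swallows one of the interval blocks sitting in the gaps; since fusions never split blocks and never move legs, no sequence of your merge moves applied to a layout in these positions can ever create $W$. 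The repair is to reverse the order of operations, which is exactly what the paper does: first build the single block $b_{nm}$ while its legs are adjacent (the paper multinests $b_m$ into itself and caps with $b_m^*$; fusing $b_m^{\otimes n}$ at adjacent legs would also work), and only afterwards insert the nested material between the legs, via nesting (Lemma \ref{LemBasicOper}(a)) and tensor products, peeling off interval blocks by induction on the number of blocks. Your parenthetical remark about peeling off an innermost block points in this direction, but it must be the primary strategy --- blocks first, nesting second --- not layout first, fusion second.
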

\begin{proof}
Let $m\in \N$. Firstly, note that $b_m,b_m^*\in NC_{[m]}$ for all $m\in\N$. Secondly, it is straighforward to check that $NC_{[m]}$ is closed under taking  tensor products, vertical reflections and even involutions of partitions; moreover, $\idpart\in NC_{[m]}$. Thus we are left with proving that $NC_{[m]}$ is closed under compositions in order to see that it is a generalized category of partitions. We prove it by induction with respect to the number $n$ of middle points in the composition. Let $p\in P(n,l)$ and $q\in P(k,n)$. Assume $p,q\in NC_{[m]}$.

\emph{Case 1.} If $n=0$, then $pq\in P(k,l)$ is the partition obtained from placing the partition $q\in P(k,0)$ on the upper points and $p\in P(0,l)$ on the lower points. Any block of $pq$ is either a block of $p$ or a block of $q$ and hence the grading condition of $NC_{[m]}$ is satisfied; hence $pq\in NC_{[m]}$.

\emph{Case 2.} If $n=1$, there is exactly one block in $q$ containing $a$ upper points and one lower point, such that $a-1\in m\Z$. Likewise, $p$ contains exactly one block on $b$ lower points and one upper point, such that $b-1=-(1-b)\in m\Z$. Now, composing $p$ and $q$, we obtain a partition $pq\in P(k,l)$ containing one block on $a$ upper and $b$ lower points. Now, $a-b=(a-1)-(b-1)\in m\Z$. Moreover, all other blocks of $pq$ either consist only in upper points or only in lower points. Thus, taking into account Case 1, we infer $pq\in NC_{[m]}$.

\emph{Case 3.} If $n\geq 2$, we consider the partitions:
\[p':=\left(\idpart^{\otimes m-1} \otimes p\right)\left(b_m\otimes \idpart^{\otimes n-1}\right),\qquad
q':=\left(b_m^*\otimes \idpart^{\otimes n-1}\right)\left(\idpart^{\otimes m-1} \otimes q\right)\]
Then $p'\in P(n-1,l+(m-1))$ and $q'\in P(k+(m-1),n-1)$ are in $NC_{[m]}$ by Lemma \ref{LemExNonBSCateg}. By the induction hypothesis, the composition $p'q'$ is in $NC_{[m]}$. Moreover:
\[pq=\left(b_m^*\otimes\idpart^{\otimes l}\right)\left(\idpart\otimes p'q'\right)\left(b_m\otimes \idpart^{\otimes k}\right)\]
Using Lemma \ref{LemExNonBSCateg}, this completes the proof that $NC_{[m]}$ is a generalized category of partitions.

For $m\geq 3$, it is clear, that $\paarpart\notin NC_{[m]}$, so $NC_{[m]}$ is no Banica-Speicher category of partitions. Regarding the generators, all we have to prove is $NC_{[m]}\subset \langle b_m,b_m^*\rangle_g$ for $m\geq 3$. Note that $NC_{[1]}=NC$ while $NC_{[2]}$ consists in all noncrossing partitions having blocks of even size, so for $m=1$ and $m=2$, the situation is clear from the known theory on Banica-Speicher categories of partitions, see for instance \cite{We}.

As for proving $NC_{[m]}\subset \langle b_m,b_m^*\rangle_g$, $m\geq 3$, we first convince ourselves that $b_{nm}, b_{nm}^*\in \langle b_m,b_m^*\rangle_g$ for all $n\in\N$. Indeed, using the multinest notation from Section \ref{SectNest}, we have:
\begin{align*}
b_{2m}&=\left(b_m^*\otimes \idpart^{\otimes 2m}\right)\mult(b_m,1,3)\in \langle b_m,b_m^*\rangle_g\\
b_{(n+1)m}&=\left(\idpart^{\otimes (m+1)}\otimes b_m^*\otimes \idpart^{\otimes (nm-1)}\right)\left(b_{2m}\otimes b_{nm}\right)\in \langle b_m,b_m^*\rangle_g, \quad n\geq 2
\end{align*}
Now, let $p\in NC_{[m]}$ be a partition consisting in a single block, with  $a$ upper and $b$ lower points. Let $x,y\in\N$ be such that $xm>a\geq(x-1)m$ and $ym>b\geq(y-1)m$. As $a-b\in m\Z$, we have $b-(y-1)m=a-(x-1)m$. We infer:
\[p=\left(b_m^*\otimes\idpart^{\otimes b}\right)\left(\idpart^{\otimes (b-(y-1)m)}\otimes b_{ym}\otimes b_{xm}^*\right)\left(b_m\otimes\idpart^{\otimes a}\right)\in \langle b_m,b_m^*\rangle_g\]
Thus, $p\in\langle b_m,b_m^*\rangle_g$. Using nestings and tensor products, we infer $NC_{[m]}\subset\langle b_m,b_m^*\rangle_g$.
\end{proof}

\section{Basics on Banica-Speicher quantum groups and others}

In this section we recall the definitions of Hopf algebras (Section \ref{SectHopf}), compact matrix quantum groups (Section \ref{SectCMQG}) and Banica-Speicher quantum groups (Section \ref{SectBSQG}) building on Woronowicz's Tannaka-Krein duality (Section \ref{SectTK}). In Section \ref{SectOrignPart} we explain how partition $C^*$-algebras originate from the theory of Banica-Speicher quantum groups.

\subsection{Hopf algebras}
\label{SectHopf}

Let $A$ be a unital algebra. We denote by $m:A\otimes A\to A, (a,b)\mapsto ab$ the multiplication map 
and by $\eta:\C\to A,\lambda\mapsto \lambda 1$ the embedding of the complex numbers as multiples of the identity.

A \emph{(unital) Hopf algebra} is a tuple $(A,\Delta,\epsilon, S)$ such that
\begin{itemize}
\item $A$ is a unital complex  algebra,
\item $\Delta:A\to A\otimes A$ is a unital algebra homomorphism (the \emph{comultiplication}) with \[(\Delta\otimes\id)\circ\Delta=(\id\otimes\Delta)\circ\Delta,\]
\item $\epsilon: A\to \C$ is an algebra homomorphism (the \emph{counit}) with 
\[(\epsilon\otimes \id)\circ\Delta=\id=(\id\otimes\epsilon)\circ\Delta,\]
\item $S:A\to A$ is a linear map (the \emph{antipode}) with 
\[m\circ(S\otimes\id)\circ\Delta=\eta\circ\epsilon=m\circ(\id\otimes S)\circ\Delta.\]
\end{itemize}

See also \cite{Tim} for more on Hopf algebras.

\subsection{Compact matrix quantum groups}
\label{SectCMQG}

Let $n\in \N$. A \emph{compact matrix quantum group} is a tuple $G=(A,u)$ such that
\begin{itemize}
\item $A$ is a unital $C^*$-algebra generated by elements $u_{ij}$, for $i,j=1,\ldots, n$,
\item the matrices $u=(u_{ij})$ and $\bar u=(u_{ij}^*)$ are invertible in $M_n(A)$,
\item $\Delta: A\to A\otimes_{\min} A, u_{ij}\mapsto \sum_k u_{ik}\otimes u_{kj}$ is a unital $^*$-homomorphism.
\end{itemize}
We sometimes denote $C(G):=A$, even when $A$ is noncommutative.

Compact matrix quantum groups have been introduced by Woronowicz \cite{WoCMQG, WoCMQG2}, see also \cite{Tim, NT}. Some authors prefer to replace the condition that $\bar u$ needs to be invertible by imposing invertibility of $u^t=(u_{ji})$. It is straightforward to check that both conditions are equivalent. 
Hopf algebras and compact matrix quantum groups are naturally linked: Any compact matrix quantum group contains a dense Hopf algebra, see \cite[Sect. 5.4]{Tim} or \cite[Sect. 1.6]{NT}.

\subsection{Tannaka-Krein duality for compact matrix quantum groups}
\label{SectTK}

 Woronowicz \cite{WoTK} proved a quantum version of Tannaka-Krein duality, also called Schur-Weyl duality. See also \cite{Tim, NT, Mala}.

\begin{prop}[Tannaka-Krein duality]
\label{PropTK}
Compact matrix quantum groups are completely determined by their representation theory. More precisely:
\begin{itemize}
\item[(a)] If $G$ is a compact matrix quantum group, then the class of finite-dimensional unitary representations $\textnormal{Rep}(G)$ forms a tensor category in Woronowicz's sense.
\item[(b)] Conversely, if $R$ is an (abstract) tensor category in Woronowicz's sense, there is a compact matrix quantum group $G$ such that $\textnormal{Rep}(G)=R$.
\end{itemize}
\end{prop}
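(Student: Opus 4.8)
The plan is to handle the two directions separately: part (a), building a category out of a given quantum group, is essentially a bookkeeping exercise once the analytic input is in place, whereas part (b), reconstructing a quantum group from an abstract category, carries the real content of the duality.

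For (a) I would first fix the relevant notions. A finite-dimensional unitary representation of $G=(A,u)$ is a unitary matrix $v=(v_{ij})\in M_m(A)$ with $\Delta(v_{ij})=\sum_k v_{ik}\otimes v_{kj}$, and a morphism from $v\in M_m(A)$ to $w\in M_{m'}(A)$ is a linear map $T\colon\C^m\to\C^{m'}$ with $(1\otimes T)\,v=w\,(1\otimes T)$ inside $A\otimes B(\C^m,\C^{m'})$; note this is exactly the intertwiner relation $T_p\,u^{\otimes k}=u^{\otimes l}\,T_p$ already recalled above, now allowed for arbitrary $T$. I would then verify, operation by operation, that this class is closed under direct sums (block-diagonal $v$), tensor products (with $(v\otimes w)_{(i,k),(j,l)}=v_{ij}w_{kl}$) and conjugation $\bar v=(v_{ij}^*)$, the invertibility of $\bar u$ from the CMQG axioms being precisely what keeps conjugation inside the class. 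The genuine input is the existence of the Haar state on $A$; averaging against it produces invariant inner products, hence unitarizability and complete reducibility of every finite-dimensional representation, and thereby Frobenius reciprocity. Assembling these facts shows $\mathrm{Rep}(G)$ is a tensor category in Woronowicz's sense, rigid with conjugates.

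For (b) I would use that Woronowicz's categories are already concrete: the objects are the tensor powers of a fixed $n$-dimensional Hilbert space together with a conjugate object, and the data are prescribed morphism spaces $\mathrm{Mor}(u^{\otimes k},u^{\otimes l})\subseteq B((\C^n)^{\otimes k},(\C^n)^{\otimes l})$ closed under composition, tensoring and adjoints. I would form the universal $*$-algebra $\mathcal A$ on generators $u_{ij}$, $i,j=1,\ldots,n$, subject to the relations $T\,u^{\otimes k}=u^{\otimes l}\,T$ for every $T$ in the category. One checks that $\Delta(u_{ij})=\sum_k u_{ik}\otimes u_{kj}$ and $\epsilon(u_{ij})=\delta_{ij}$ descend to $\mathcal A$, since these relations are stable under the coproduct, turning $\mathcal A$ into a bialgebra; the antipode is then determined on generators by $S(u_{ij})=u_{ji}^*$ and exists precisely because rigidity supplies a duality morphism $t\colon\C\to u\otimes\bar u$ exhibiting $\bar u$ as a conjugate of $u$. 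Because the morphisms force $u$ to be unitary, every $*$-representation of $\mathcal A$ satisfies $\|u_{ij}\|\le 1$, so the universal $C^*$-seminorm is finite; its separated completion is a compact matrix quantum group $G$.

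The hard part is the final claim $\mathrm{Rep}(G)\cong R$. The inclusion $R\hookrightarrow\mathrm{Rep}(G)$ is automatic, since the defining relations make each $T\in R$ a $G$-intertwiner, and complete reducibility together with the presence of $\bar u$ shows every object of $\mathrm{Rep}(G)$ is a subobject of a tensor product of copies of $u$ and $\bar u$. What needs real work is \emph{fullness}: that no new intertwiners appear, i.e.\ the natural map $\mathrm{Mor}_R(u^{\otimes k},u^{\otimes l})\to\mathrm{Mor}_G(u^{\otimes k},u^{\otimes l})$ is onto. I expect this to be the main obstacle. The standard route expresses the right-hand space as the range of an averaging projection built from the Haar state and then identifies that projection with one assembled purely from the morphisms of $R$, forcing the two spaces to coincide. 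This tightness argument is the technical heart of Woronowicz's theorem \cite{WoTK}; cleaner modern proofs may be found in \cite{NT, Mala}.
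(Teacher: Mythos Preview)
Your sketch is a reasonable outline of the standard proof, but you should note that the paper does not actually prove this proposition: it is stated as a known result due to Woronowicz \cite{WoTK}, with pointers to \cite{Tim, NT, Mala} for modern expositions, and no proof is given. So there is nothing in the paper to compare your argument against; the proposition functions purely as background for the Banica--Speicher construction in Section~\ref{SectBSQG} and for Proposition~\ref{PropBSisPart}.

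That said, your outline matches the architecture of the cited proofs: in (a) the Haar state provides unitarizability and semisimplicity, and in (b) one builds the universal $*$-algebra from the intertwiner relations, checks the Hopf structure descends, completes, and then proves fullness of $R\hookrightarrow\mathrm{Rep}(G)$ via a Haar-averaging argument. If you intend to include a proof rather than a citation, be aware that the fullness step you flag as ``the main obstacle'' genuinely is the bulk of the work and is not short; for the purposes of this paper a citation is the appropriate choice, and indeed one of the paper's explicit goals (see the introduction and Section~\ref{SectExCMQG}) is to \emph{circumvent} this result rather than reprove it.
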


The key step in the above duality between tensor categories and compact matrix quantum groups is the interpretation of the morphisms from the tensor category as intertwiners of the quantum group. An intertwiner is a linear map $T:H_1\to H_2$ between finite-dimensional Hilbert spaces such that $Tu_1=u_2T$ holds for representations $u_1$ and $u_2$ of the quantum group.

\subsection{Banica-Speicher quantum groups}
\label{SectBSQG}

Due to the work of Banica and Speicher \cite{BS} we may associate a Woronowicz tensor category to a category of partitions: 
Let $n\in\N$ and let $e_1,\ldots,e_n$ be the canonical basis of $\C^n$. Given a partition $p\in P(k,l)$, we define a linear map $T_p:(\C^n)^{\otimes k}\to(\C^n)^{\otimes l}$ by:
\[T_p(e_{i_1}\otimes \ldots\otimes e_{i_k})=\sum_{j_1,\ldots, j_l=1}^n\delta_p(i,j)e_{j_1}\otimes \ldots\otimes e_{j_l}\]
If $k=0$, then $T_p:\C\to(\C^n)^{\otimes l}$ is given by $T_p(1)=\sum_{j_1,\ldots, j_l=1}^n\delta_p(\emptyset,j)e_{j_1}\otimes \ldots\otimes e_{j_l}$. Analogously, if $l=0$ then $T_p:(\C^n)^{\otimes k}\to\C$ is defined by $T_p(e_{i_1}\otimes \ldots\otimes e_{i_k})=\delta_p(i,\emptyset)$.
One can check \cite[Prop. 1.9]{BS} that the tensor product, the composition and the involution on partitions passes through  the assignment $p\mapsto T_p$ to the tensor product, composition and involution of the maps $T_p$. Hence, given a category of partitions $\CC$, the set
\[\lspan\{T_p\;|\; p\in\CC\}\]
is a (weak version of a) tensor category in Woronowicz's sense, and we may associate a compact matrix quantum group $(A,u)$ to it, a \emph{Banica-Speicher quantum group}, by definition. As sketched above, the maps $T_p$ are then interpreted as intertwiners of representations of $(A,u)$. See \cite{BS} or also \cite{WeEQGLN, TWApp, WeLInd}.

\subsection{Extracting partition $C^*$-algebras from the Banica-Speicher theory}
\label{SectOrignPart}

The definition of partition $C^*$-algebras is very much inspired from the Banica-Speicher theory, as will be explained in this section.
Let $(A,u)$ be any compact matrix quantum group. There are canonical representations of $(A,u)$ given by tensor powers $u^{\otimes k}\in M_{n^k\times n^k}(\C)\otimes A$ of the matrix $u$, for $k\in \N$. They  act on elements $e_{i_1}\otimes\ldots\otimes e_{i_k}\otimes 1$ of $(\C^n)^k\otimes A$ in the following way.  We write:
\[u^{\otimes k}(e_{i_1}\otimes\ldots\otimes e_{i_k}\otimes 1)=\sum_{j_1,\ldots,j_k=1}^n e_{j_1}\otimes\ldots\otimes e_{j_k}\otimes u_{j_1 i_1}\ldots u_{j_ki_k}\]
Given a partition $p\in P(k,l)$, the map $T_p$ is an intertwiner for $u^{\otimes k}$ and $u^{\otimes l}$ if and only if the relations $R(p)$ hold in $A$.

\begin{lem}\label{LemIntertwiner}
 Let $p\in P(k,l)$ and let $n\in\N$. Let $A$ be a $C^*$-algebra generated by elements $u_{ij}$, $1\leq i,j\leq n$. Then, the generators $u_{ij}$ fulfill the relations $R(p)$ if and only if $T_p u^{\otimes k}=u^{\otimes l} T_p$ and all $u_{ij}$ are self-adjoint.
\end{lem}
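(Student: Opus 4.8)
The plan is to prove the equivalence by a single direct computation, comparing the two composite maps $T_p u^{\otimes k}$ and $u^{\otimes l}T_p$ coefficientwise. Throughout I interpret $T_p$ as $T_p\otimes\id_A$, so that both composites are right $A$-linear maps from $(\C^n)^{\otimes k}\otimes A$ to $(\C^n)^{\otimes l}\otimes A$; by right $A$-linearity it suffices to evaluate them on the generating vectors $e_{\alpha_1}\otimes\ldots\otimes e_{\alpha_k}\otimes 1$, for $\alpha\in\{1,\ldots,n\}^k$.

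First I would apply $u^{\otimes k}$ and then $T_p$ to such a vector, using the defining formulas for $u^{\otimes k}$ and for $T_p$. Expanding and collecting the coefficient of each basis vector $e_{\beta_1}\otimes\ldots\otimes e_{\beta_l}$ yields
\[T_p u^{\otimes k}(e_{\alpha_1}\otimes\ldots\otimes e_{\alpha_k}\otimes 1)=\sum_{\beta_1,\ldots,\beta_l=1}^n e_{\beta_1}\otimes\ldots\otimes e_{\beta_l}\otimes\Bigl(\sum_{\gamma_1,\ldots,\gamma_k=1}^n\delta_p(\gamma,\beta)\,u_{\gamma_1\alpha_1}\ldots u_{\gamma_k\alpha_k}\Bigr).\]
Performing the reverse order, applying $T_p$ first and then $u^{\otimes l}$, and again collecting coefficients gives
\[u^{\otimes l} T_p(e_{\alpha_1}\otimes\ldots\otimes e_{\alpha_k}\otimes 1)=\sum_{\beta_1,\ldots,\beta_l=1}^n e_{\beta_1}\otimes\ldots\otimes e_{\beta_l}\otimes\Bigl(\sum_{\gamma_1',\ldots,\gamma_l'=1}^n\delta_p(\alpha,\gamma')\,u_{\beta_1\gamma_1'}\ldots u_{\beta_l\gamma_l'}\Bigr).\]

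Then I would invoke freeness of the module: since $(\C^n)^{\otimes l}\otimes A$ is a free right $A$-module with basis $\{e_{\beta_1}\otimes\ldots\otimes e_{\beta_l}\otimes 1\}$, two of its elements agree if and only if their $A$-valued coefficients agree for every $\beta$. Comparing the two displays coefficientwise therefore shows that $T_p u^{\otimes k}=u^{\otimes l}T_p$ holds if and only if, for all $\alpha,\beta$, one has $\sum_{\gamma}\delta_p(\gamma,\beta)u_{\gamma_1\alpha_1}\ldots u_{\gamma_k\alpha_k}=\sum_{\gamma'}\delta_p(\alpha,\gamma')u_{\beta_1\gamma_1'}\ldots u_{\beta_l\gamma_l'}$, which is precisely the relation $R(p)$. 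Since both formulations in the statement explicitly demand that the $u_{ij}$ be self-adjoint (this is part of the definition of fulfilling $R(p)$), the self-adjointness hypothesis matches on both sides, and the equivalence follows.

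I expect no deep obstacle here: the content is entirely the bookkeeping in the two coefficient computations. The only points requiring care are the conventions for the degenerate cases $k=0$ and $l=0$ --- where $u^{\otimes 0}$ is the identity on $A$ and $T_p(1)=\sum_\beta\delta_p(\emptyset,\beta)e_\beta$, so that the same computation reproduces exactly the two special forms of $R(p)$ stated in Section~\ref{SectRemind2} --- and the observation that the monomials $u_{\gamma_1\alpha_1}\ldots u_{\gamma_k\alpha_k}$ live in the noncommutative algebra $A$, so that their ordering must be tracked faithfully through both compositions.
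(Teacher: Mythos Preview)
Your proposal is correct and follows essentially the same approach as the paper: both compute $T_p u^{\otimes k}$ and $u^{\otimes l}T_p$ on the basis vectors $e_{\alpha_1}\otimes\ldots\otimes e_{\alpha_k}\otimes 1$, expand, and compare coefficients of $e_{\beta_1}\otimes\ldots\otimes e_{\beta_l}$ to recover exactly the relations $R(p)$. Your explicit remarks on the free-module justification for coefficient comparison and on the degenerate cases $k=0$, $l=0$ are welcome clarifications but do not change the argument.
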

\begin{proof}
For any multi index $\alpha=(\alpha_1,\ldots,\alpha_k)$ we have:
\begin{align*}
T_p u^{\otimes k} (e_{\alpha_1}\otimes\ldots\otimes e_{\alpha_k}\otimes 1)
&= \sum_{\gamma_1,\ldots,\gamma_k} T_p (e_{\gamma_1}\otimes\ldots\otimes e_{\gamma_k})\otimes u_{\gamma_1\alpha_1}\ldots u_{\gamma_k\alpha_k}\\
&= \sum_{\gamma_1,\ldots,\gamma_k} \sum_{\beta_1,\ldots,\beta_l} \delta_p(\gamma,\beta)e_{\beta_1}\otimes\ldots\otimes e_{\beta_l}\otimes u_{\gamma_1\alpha_1}\ldots u_{\gamma_k\alpha_k}\\
&=  \sum_{\beta_1,\ldots,\beta_l}e_{\beta_1}\otimes\ldots\otimes e_{\beta_l}\otimes \left(\sum_{\gamma_1,\ldots,\gamma_k}  \delta_p(\gamma,\beta) u_{\gamma_1\alpha_1}\ldots u_{\gamma_k\alpha_k}\right)
\end{align*}
On the other hand:
\begin{align*}
u^{\otimes l}T_p  (e_{\alpha_1}\otimes\ldots\otimes e_{\alpha_k}\otimes 1)
&= \sum_{\gamma_1',\ldots,\gamma_l'} u^{\otimes l} \delta_p(\alpha,\gamma')(e_{\gamma_1'}\otimes\ldots\otimes e_{\gamma_l'}\otimes 1)\\
&= \sum_{\gamma_1',\ldots,\gamma_l'} \sum_{\beta_1,\ldots,\beta_l} \delta_p(\alpha,\gamma') e_{\beta_1}\otimes\ldots\otimes e_{\beta_l}\otimes u_{\beta_1\gamma_1'}\ldots u_{\beta_l\gamma_l'}\\
&= \sum_{\beta_1,\ldots,\beta_l} e_{\beta_1}\otimes\ldots\otimes e_{\beta_l}\otimes \left(\sum_{\gamma_1',\ldots,\gamma_l'}\delta_p(\alpha,\gamma')u_{\beta_1\gamma_1'}\ldots u_{\beta_l\gamma_l'}\right)
\end{align*}
Comparison of the coefficients yields the result.
\end{proof}

\begin{prop}
\label{PropBSisPart}
Let $n\in\N$ and let $\CC\subset P$ be a category of partitions. Let $(A,u)$ be the Banica-Speicher quantum group arising via Tannaka-Krein duality. Then $A$ is a partition $C^*$-algebra:
\[A=A_n(\CC)\]
\end{prop}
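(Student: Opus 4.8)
The plan is to realize both $A_n(\CC)$ and the Banica--Speicher quantum group $A$ as solutions of universal properties that point in opposite directions, and then to check that the resulting surjections invert one another. The bridge between the $C^*$-algebraic relations and the representation-theoretic language will be Lemma \ref{LemIntertwiner} throughout: for a matrix $v=(v_{ij})$ of self-adjoint elements, the relations $R(p)$ hold precisely when $T_p v^{\otimes k}=v^{\otimes l}T_p$ for all $p\in\CC(k,l)$. I would also record at the outset that $A_n(\CC)$ exists, since $\paarpart,\baarpart\in\CC$ forces $\CC$ to be $n$-admissible by \cite[Lemma 2.4]{PartI}.

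First I would produce a surjection $\phi\colon A_n(\CC)\to A$. By the construction of orthogonal Banica--Speicher quantum groups (Section \ref{SectBSQG}, \cite{BS}), the fundamental matrix $u$ of $A$ is orthogonal; in particular its entries $u_{ij}$ are self-adjoint, and since each $T_p$ with $p\in\CC$ is an intertwiner of $(A,u)$, Lemma \ref{LemIntertwiner} shows that these generators satisfy every relation $R(p)$, $p\in\CC$. Thus the generators of $A$ form a representation of the relations defining the universal $C^*$-algebra $A_n(\CC)$, and the universal property of $A_n(\CC)$ yields a $*$-homomorphism $\phi$ with $\phi(u_{ij})=u_{ij}$, which is surjective because the $u_{ij}$ generate $A$.

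For the reverse surjection $\psi\colon A\to A_n(\CC)$, I would first equip $(A_n(\CC),u)$ with a compact matrix quantum group structure. The elements $w_{ij}:=\sum_k u_{ik}\otimes u_{kj}\in A_n(\CC)\otimes_{\min}A_n(\CC)$ are again self-adjoint, and I would verify that they satisfy $R(p)$ for all $p\in\CC$ --- equivalently, that $T_p$ intertwines $w^{\otimes k}$ and $w^{\otimes l}$, which holds because the intertwiner relations for $u$ are preserved under forming the tensor product of representations. The universal property of $A_n(\CC)$ then delivers a comultiplication $\Delta\colon A_n(\CC)\to A_n(\CC)\otimes_{\min}A_n(\CC)$ with $u_{ij}\mapsto w_{ij}$; coassociativity holds on generators, and by $R(\paarpart),R(\baarpart)$ together with Example \ref{ExOrth} the matrices $u$ and $\bar u=u$ are orthogonal, hence invertible. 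So $(A_n(\CC),u)$ is a compact matrix quantum group whose fundamental matrix realizes all $T_p$ ($p\in\CC$) as intertwiners, again by Lemma \ref{LemIntertwiner}. Invoking the universality inherent in Tannaka--Krein duality (Proposition \ref{PropTK}), by which $A$ is initial among compact matrix quantum groups whose $n\times n$ fundamental matrix has the prescribed morphisms as intertwiners, I obtain $\psi$ with $\psi(u_{ij})=u_{ij}$.

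Finally, the compositions $\psi\circ\phi$ and $\phi\circ\psi$ fix the generators $u_{ij}$ and are therefore the respective identity maps, so $\phi$ is the desired isomorphism $A_n(\CC)\cong A$. I expect the main obstacle to lie in the second direction: checking that $A_n(\CC)$ is genuinely a compact matrix quantum group --- the only nontrivial point being that the candidate comultiplication $w_{ij}$ still satisfies every $R(p)$, i.e.\ that the intertwiner relations survive the tensor-product operation --- and then applying the precise universal form of the Tannaka--Krein construction in the correct direction. By contrast, the first direction is essentially immediate once self-adjointness of the $u_{ij}$ in $A$ and Lemma \ref{LemIntertwiner} are in hand.
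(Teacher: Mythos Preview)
Your argument is correct, but it takes a longer route than the paper. The paper's proof is a one-liner: the Tannaka--Krein construction (as made explicit in the cited references \cite{TWop,TWApp}) already presents $A$ as the universal $C^*$-algebra generated by self-adjoint $u_{ij}$ subject to the intertwiner relations $T_p u^{\otimes k}=u^{\otimes l}T_p$ for all $p\in\CC$; Lemma~\ref{LemIntertwiner} then says these are literally the same relations as $R(p)$, so the two universal $C^*$-algebras coincide on the nose. No maps need to be built.

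Your approach instead constructs $\phi$ and $\psi$ explicitly and checks they are mutually inverse. This is sound, but notice that the work you do for $\psi$ --- verifying that $A_n(\CC)$ carries a comultiplication and that $u$ is invertible --- is exactly what the paper postpones to Section~\ref{SectExCMQGandHopf} (Proposition~\ref{PropExHopf} and Theorem~\ref{ThmExCMQG1}), and in the end you still invoke the same universal property of the Tannaka--Krein object to produce $\psi$. So your proof is more self-contained at the cost of duplicating later material, while the paper's proof is shorter because it treats the universal-$C^*$-algebra description of $A$ as a known consequence of Tannaka--Krein and simply matches relations via Lemma~\ref{LemIntertwiner}.
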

\begin{proof}
Following the lines of the Tannaka-Krein Theorem (Prop. \ref{PropTK}), we infer that $A$ is defined as the universal $C^*$-algebra generated by the intertwiner relations $T_pu^{\otimes k}=u^{\otimes l}T_p$; see \cite{TWop} or rather \cite{TWApp} for more on this, also \cite{WeEQGLN, WeLInd}. By Lemma \ref{LemIntertwiner} we infer $A=A_n(\CC)$.
\end{proof}

In Section \ref{SectExCMQG} we will show how to circumvent the Tannaka-Krein theorem in the construction of Banica-Speicher quantum groups. In this context, we also want to recall a result from \cite{PartI}.

\begin{prop}[{\cite[Cor. 2.9]{PartI}}]
Let $\mathcal C=\langle p_1,\ldots,p_m\rangle$ be a category of partitions generated by partitions $p_1,\ldots,p_m$ (i.e. $\mathcal C$ is the smallest category of partitions containing $p_1,\ldots,p_m$), then:
\[A_n(\mathcal C)=A_n(\{p_1,\ldots,p_m,\paarpart,\baarpart\})\]
\end{prop}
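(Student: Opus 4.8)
The plan is to realize the claimed equality as a pair of mutually inverse surjections of universal $C^*$-algebras, both acting as the identity on the generating matrix $(u_{ij})$. Write $X:=\{p_1,\ldots,p_m,\paarpart,\baarpart\}$. Since $\paarpart\in X$, the set $X$ is $n$-admissible for every $n\in\N$ by \cite[Lemma 2.4]{PartI}, so $A_n(X)$ exists; likewise $\mathcal{C}$ is $n$-admissible because, being a Banica-Speicher category, it too contains $\paarpart$.

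First I would dispatch the easy inclusion. As $\mathcal{C}$ is a Banica-Speicher category containing $p_1,\ldots,p_m$, it contains these partitions together with $\paarpart,\baarpart$, so $X\subseteq\mathcal{C}$. Imposing the additional relations $R(p)$ for $p\in\mathcal{C}\setminus X$ can only pass to a quotient of the universal $C^*$-algebra, so the generators induce a canonical surjection $\pi\colon A_n(X)\surj A_n(\mathcal{C})$ fixing each $u_{ij}$.

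The substantive step is the reverse direction, for which I would show $\mathcal{C}\subseteq H_n(X)$, where $H_n(X)$ is the set of all partitions whose relations $R(p)$ hold in $A_n(X)$. By Theorem \ref{ThmPartI}(a)--(d) the set $H_n(X)$ is a generalized category of partitions, and since $\paarpart,\baarpart\in X\subseteq H_n(X)$, Theorem \ref{ThmPartI}(e) makes it closed under involution as well; thus $H_n(X)$ is in fact a full Banica-Speicher category of partitions, and it contains $p_1,\ldots,p_m$. Because $\mathcal{C}$ is by definition the smallest such category, $\mathcal{C}\subseteq H_n(X)$, that is, the relations $R(p)$ hold in $A_n(X)$ for every $p\in\mathcal{C}$. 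Hence the self-adjoint generators of $A_n(X)$ satisfy all the defining relations of $A_n(\mathcal{C})$, and the universal property of $A_n(\mathcal{C})$ yields a surjection $\rho\colon A_n(\mathcal{C})\surj A_n(X)$ fixing each $u_{ij}$.

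Finally, both $\pi\circ\rho$ and $\rho\circ\pi$ fix the generators and therefore equal the respective identity maps, so $\pi$ and $\rho$ are mutually inverse isomorphisms and $A_n(\mathcal{C})=A_n(X)$. I expect the only genuine content to lie in the observation that adjoining $\paarpart$ and $\baarpart$ to the generating set upgrades $H_n(X)$ from a generalized category to a genuine Banica-Speicher category (via closure under involution), which is precisely what forces the entire generated category $\mathcal{C}$ into $H_n(X)$; everything else is the standard universal-property bookkeeping.
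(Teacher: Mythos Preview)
Your proof is correct and is essentially the intended argument: the present paper does not reprove this proposition but cites it as \cite[Cor.~2.9]{PartI}, where it is obtained exactly as you do, namely as an immediate consequence of \cite[Thm.~2.8]{PartI} (restated here as Theorem~\ref{ThmPartI}) together with the universal property of $A_n(-)$. The only cosmetic difference is that the double-surjection bookkeeping you spell out is packaged in \cite[Lem.~2.7]{PartI} as the statements $A_n(X)=A_n(H_n(X))$ and $X\subseteq Y\subseteq H_n(X)\Rightarrow A_n(X)=A_n(Y)$, but the content is identical.
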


\section{Quantum algebraic structures on partition $C^*$-algebras}
\label{SectExCMQGandHopf}

In this section, we show how partition $C^*$-algebras may be equiped with quantum algebraic structures, namely with the structure of a Hopf algebra on the one hand, and a compact matrix quantum group on the other hand. This is by no means surprising: The quantum algebraic structures are just the same as the standard structures on Wang's $O_n^+$ and $S_n^+$ \cite{WaOn, WaSn} or the Banica-Speicher quantum groups. However, we give purely algebraic proofs and we work with a class of $C^*$-algebras which is larger than the one underlying Banica-Speicher quantum groups.

\subsection{Existence of a Hopf algebra structure}

\begin{prop}\label{PropExHopf}
Let $X\subset P$ be an $n$-admissible set.
\begin{itemize}
\item[(a)] There is a unital $^*$-homomorphism $\Delta:A_n(X)\to A_n(X)\otimes A_n(X)$, mapping $u_{ij}\mapsto \sum_k u_{ik}\otimes u_{kj}$ with $(\Delta\otimes\id)\circ\Delta=(\id\otimes\Delta)\circ\Delta$.
\item[(b)] There is a unital $^*$-homomorphism $\epsilon: A_n(X)\to\C$, mapping $u_{ij}\to\delta_{ij}$ with $(\epsilon\otimes \id)\circ\Delta=\id=(\id\otimes\epsilon)\circ\Delta$.
\item[(c)] If $H_n(X)$ is closed under taking adjoints of partitions, then there is a linear map $S: A_n(X)\to A_n(X)$, mapping $u_{ij}\to u_{ji}$. If moreover $\paarpart,\baarpart\in H_n(X)$, then $m\circ(S\otimes\id)\circ\Delta=\eta\circ\epsilon=m\circ(\id\otimes S)\circ\Delta$ holds.
\end{itemize}
\end{prop}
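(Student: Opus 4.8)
The unifying tool is the universal property of $A_n(X)$: to produce a unital $^*$-homomorphism out of $A_n(X)$ into a target $C^*$-algebra $B$, it suffices to exhibit self-adjoint elements $b_{ij}\in B$ satisfying all the relations $R(p)$, $p\in X$, and the homomorphism is then the unique one with $u_{ij}\mapsto b_{ij}$. In each of (a), (b), (c) the plan is first to produce the map this way, and only afterwards to verify the relevant compatibility axiom on the generators and extend it to all of $A_n(X)$ by a formal argument. Throughout, the most economical way to check $R(p)$ is via Lemma \ref{LemIntertwiner}: $R(p)$ is equivalent to the single matrix identity $T_pv^{\otimes k}=v^{\otimes l}T_p$ together with self-adjointness of the $v_{ij}$.

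For (a) I would take $B=A_n(X)\otimes_{\min}A_n(X)$ and $v_{ij}:=\sum_k u_{ik}\otimes u_{kj}$. Self-adjointness is immediate from $u_{ij}=u_{ij}^*$. For $R(p)$, I would write $v^{\otimes k}$ in leg notation as the product of the two tensor copies of $u^{\otimes k}$ and push $T_p$ through each copy in turn using $T_pu^{\otimes k}=u^{\otimes l}T_p$, arriving at $T_pv^{\otimes k}=v^{\otimes l}T_p$; equivalently, one substitutes $v_{ab}=\sum_c u_{ac}\otimes u_{cb}$ into $R(p)$ and applies $R(p)$ for $u$ twice, once in each tensor factor. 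Universality then yields $\Delta$, and coassociativity is checked on generators, where both $(\Delta\otimes\id)\circ\Delta$ and $(\id\otimes\Delta)\circ\Delta$ send $u_{ij}\mapsto\sum_{k,m}u_{ik}\otimes u_{km}\otimes u_{mj}$; since both composites are $^*$-homomorphisms and the $u_{ij}$ generate, they coincide. For (b) I would take $B=\C$ and $b_{ij}:=\delta_{ij}$, where $R(p)$ holds trivially because the identity matrix intertwines everything ($T_p\,\id^{\otimes k}=\id^{\otimes l}T_p$), or directly because the Kronecker deltas collapse both sides of $R(p)$ to $\delta_p(\alpha,\beta)$. Universality gives $\epsilon$, and the counit identities follow on generators from $(\epsilon\otimes\id)\Delta(u_{ij})=\sum_k\delta_{ik}u_{kj}=u_{ij}$ and symmetrically, extended by the homomorphism property.

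For (c) I would construct $S$ as a $^*$-anti-homomorphism, that is, as a unital $^*$-homomorphism $A_n(X)\to A_n(X)^{\mathrm{op}}$ into the opposite algebra with $u_{ij}\mapsto u_{ji}$. The candidate generators $w_{ij}:=u_{ji}$ are self-adjoint, and the key point is that the relation $R(p)$ for the $w_{ij}$, with products formed in $A_n(X)^{\mathrm{op}}$, is exactly the relation $R\big((\tilde p)^*\big)$ for the original $u_{ij}$ in $A_n(X)$: reversing the order of a product corresponds to the vertical reflection $p\mapsto\tilde p$ and transposing the indices corresponds to the involution $p\mapsto p^*$, so together they realize the $180^\circ$ rotation $\widetilde{(p^*)}=(\tilde p)^*$. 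Since $\tilde p\in H_n(X)$ always (Theorem \ref{ThmPartI}(d)) and $H_n(X)$ is assumed closed under adjoints, we get $(\tilde p)^*\in H_n(X)$, so $R\big((\tilde p)^*\big)$ holds and universality produces $S$. For the antipode axiom, $\paarpart,\baarpart\in H_n(X)$ give $uu^t=u^tu=1$ by Example \ref{ExOrth}, whence on generators $m\circ(S\otimes\id)\circ\Delta(u_{ij})=\sum_k u_{ki}u_{kj}=\delta_{ij}=\eta\circ\epsilon(u_{ij})$ and $m\circ(\id\otimes S)\circ\Delta(u_{ij})=\sum_k u_{ik}u_{jk}=\delta_{ij}$. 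To pass from generators to all of $A_n(X)$, I would observe that $\{a:m(S\otimes\id)\Delta(a)=\epsilon(a)1\}$ is a unital subalgebra, which uses that $\Delta$ is a homomorphism and $S$ an anti-homomorphism, and likewise for the other identity.

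The genuinely delicate points, I expect, all sit in (c). The first is the bookkeeping identifying the opposite-algebra relation $R(p)$ for the transpose with $R\big((\tilde p)^*\big)$: getting the two reflections right, and treating the boundary forms $k=0$ or $l=0$ of $R(p)$ separately. The second is the extension of the antipode identity beyond the generators, which cannot be done by ``checking on generators'' alone because $m\circ(S\otimes\id)\circ\Delta$ is \emph{not} an algebra homomorphism; the subalgebra argument above is precisely what repairs this. By contrast, everything in (a) and (b) is routine once the universal property is invoked.
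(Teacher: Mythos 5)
Your proposal is correct, and in part (c) it takes a genuinely different route from the paper. For (a) and (b) the two arguments coincide: the paper verifies $R(p)$ for $u'_{ij}=\sum_k u_{ik}\otimes u_{kj}$ by applying $R(p)$ once in each tensor leg, and for $u'_{ij}=\delta_{ij}$ by collapsing the deltas, then invokes universality and checks coassociativity and the counit identities on generators; your detour through Lemma~\ref{LemIntertwiner} is cosmetic. In (c), however, the paper does \emph{not} pass to the opposite algebra: transposing the indices factor by factor, \emph{without} reversing the order of the products, already interchanges the two sides of the relation, so $R(p)$ for $u'_{ij}=u_{ji}$ is equivalent to $R(p^*)$; closure of $H_n(X)$ under adjoints then yields $S$ by universality as a unital $^*$-\emph{homomorphism}, and the antipode identity is verified on the generators only. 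Your $S$ is instead a $^*$-anti-homomorphism, built from $R((\tilde p)^*)$, with $\tilde p\in H_n(X)$ supplied for free by Theorem~\ref{ThmPartI}(d). This costs slightly more combinatorial input but buys exactly what the paper's generator check cannot give: for a multiplicative $S$ the identity $m\circ(S\otimes\id)\circ\Delta=\eta\circ\epsilon$ does not propagate from generators to products. Concretely, in $A_n(\{\paarpart,\baarpart\})=C(O_n^+)$ the multiplicative map $u_{ij}\mapsto u_{ji}$ gives $m\circ(S\otimes\id)\circ\Delta(u_{ii}u_{kk})=\sum_{r,s}u_{ri}u_{sk}u_{ri}u_{sk}$, and a Weingarten computation shows that the Haar state of this element equals $2\delta_{ik}/(n+1)$, so it cannot be $\eta\circ\epsilon(u_{ii}u_{kk})=1$ when $i\neq k$. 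Thus the identity of maps asserted in (c) genuinely requires an anti-multiplicative $S$: the proposition as stated remains true (your $S$ witnesses it, and it agrees with the paper's on generators), but the paper's own proof, which constructs the homomorphic $S$ and checks only generators, leaves a real gap that your convolution/subalgebra argument --- valid precisely because your $S$ reverses products --- closes; this stronger conclusion is also what the Hopf-algebra corollary following the proposition actually needs.
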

\begin{proof}
Throughout the proof, let $p\in X$, let $\alpha$ be a multi index of length $k$, and let $\beta$ be a multi index of length $l$.

(a) Let $u_{ij}':=\sum_k u_{ik}\otimes u_{kj}$. We have to show that the elements $u_{ij}'$ fulfill the relations $R(p)$. By the universal property of $A_n(X)$, we may then conclude the existence of $\Delta$.
We compute, using twice the relations $R(p)$ for the $u_{ij}$:
\begin{align*}
\sum_\gamma \delta_p(\gamma,\beta)u'_{\gamma_1\alpha_1}\ldots u'_{\gamma_k\alpha_k}
&= \sum_\zeta\left(\sum_\gamma \delta_p(\gamma,\beta)u_{\gamma_1\zeta_1}\ldots u_{\gamma_k\zeta_k}\right)\otimes u_{\zeta_1\alpha_1}\ldots u_{\zeta_k\alpha_k}\\
&= \sum_\zeta\left(\sum_{\zeta'} \delta_p(\zeta,\zeta')u_{\beta_1\zeta_1'}\ldots u_{\beta_l\zeta_l'}\right)\otimes u_{\zeta_1\alpha_1}\ldots u_{\zeta_k\alpha_k}\\
&= \sum_{\zeta'}u_{\beta_1\zeta_1'}\ldots u_{\beta_l\zeta_l'}\otimes\left(\sum_\zeta \delta_p(\zeta,\zeta') u_{\zeta_1\alpha_1}\ldots u_{\zeta_k\alpha_k}\right)\\
&= \sum_{\zeta'}u_{\beta_1\zeta_1'}\ldots u_{\beta_l\zeta_l'}\otimes\left(\sum_{\gamma'} \delta_p(\alpha,\gamma') u_{\zeta_1'\gamma_1'}\ldots u_{\zeta_l'\gamma_l'}\right)\\
&=\sum_{\gamma'} \delta_p(\alpha,\gamma')\left( \sum_{\zeta'}u_{\beta_1\zeta_1'}\ldots u_{\beta_l\zeta_l'}\otimes u_{\zeta_1'\gamma_1'}\ldots u_{\zeta_l'\gamma_l'}\right)\\
&=\sum_{\gamma'} \delta_p(\alpha,\gamma')u'_{\beta_1\gamma_1'}\ldots u'_{\beta_l\gamma_l'}
\end{align*}
It is then routine to check $(\Delta\otimes\id)\circ\Delta=(\id\otimes\Delta)\circ\Delta$.

(b) Put $u'_{ij}:=\delta_{ij}$ and check:
\[\sum_\gamma \delta_p(\gamma,\beta)u'_{\gamma_1\alpha_1}\ldots u'_{\gamma_k\alpha_k}
=\delta_p(\alpha,\beta)
=\sum_{\gamma'} \delta_p(\alpha,\gamma')u'_{\beta_1\gamma_1'}\ldots u'_{\beta_l\gamma_l'}
\]
Again, $(\epsilon\otimes \id)\circ\Delta=\id=(\id\otimes\epsilon)\circ\Delta$ is straightforward to check.

(c) For $u_{ij}':=u_{ji}$, the relations 
\[\sum_{\gamma_1,\ldots,\gamma_k=1}^n \delta_p(\gamma,\beta) u'_{\gamma_1\alpha_1}\ldots u'_{\gamma_k\alpha_k}
=\sum_{\gamma_1',\ldots,\gamma_l'=1}^n \delta_p(\alpha,\gamma') u'_{\beta_1\gamma_1'}\ldots u'_{\beta_l\gamma_l'}\]
are equivalent to $R(p^*)$. Moreover $m\circ(S\otimes\id)\circ\Delta(u_{ij})=\sum_k u_{ki}u_{kj}$. If now $\paarpart,\baarpart\in H_n(X)$, then $\sum_k u_{ki}u_{kj}=\eta\circ\epsilon(u_{ij})$. Similarly $\eta\circ\epsilon=m\circ(\id\otimes S)\circ\Delta$.
\end{proof}

\begin{cor}
If $X\subset P$ is $n$-admissible  and if $\paarpart, \baarpart\in H_n(X)$, then $A_n(X)$ may be endowed with a Hopf algebra structure.
\end{cor}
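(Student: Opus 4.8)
The plan is to verify, one by one, the four data required of a Hopf algebra in the sense of Section~\ref{SectHopf}, reading each of them off from Proposition~\ref{PropExHopf}. The natural carrier of the structure is not the full $C^*$-algebra but the dense unital $*$-subalgebra $\mathcal A\subset A_n(X)$ generated algebraically by $1$ and the $u_{ij}$: this is where the comultiplication takes values in the \emph{algebraic} tensor product, as required by the definition of a Hopf algebra. So first I would pass to $\mathcal A$ and note that, since all the maps below are determined on the generators $u_{ij}$, they restrict to $\mathcal A$.

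The comultiplication and the counit are immediate. Proposition~\ref{PropExHopf}(a) supplies a coassociative unital $*$-homomorphism $\Delta$ with $\Delta(u_{ij})=\sum_k u_{ik}\otimes u_{kj}\in\mathcal A\otimes\mathcal A$, and Proposition~\ref{PropExHopf}(b) supplies the counit $\epsilon$ with $\epsilon(u_{ij})=\delta_{ij}$ together with $(\epsilon\otimes\id)\circ\Delta=\id=(\id\otimes\epsilon)\circ\Delta$. Neither of these uses the hypothesis $\paarpart,\baarpart\in H_n(X)$; they hold for every $n$-admissible $X$. In particular $\Delta$ is a unital algebra homomorphism and $\epsilon$ an algebra homomorphism, exactly as demanded by Section~\ref{SectHopf}.

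The only genuine point is the antipode, and this is where the hypothesis enters. Proposition~\ref{PropExHopf}(c) produces the linear map $S$ with $S(u_{ij})=u_{ji}$ \emph{provided} $H_n(X)$ is closed under taking adjoints of partitions, and it yields the antipode identities $m\circ(S\otimes\id)\circ\Delta=\eta\circ\epsilon=m\circ(\id\otimes S)\circ\Delta$ \emph{provided} $\paarpart,\baarpart\in H_n(X)$. I would therefore close the logical loop by invoking Theorem~\ref{ThmPartI}(e): since $\paarpart,\baarpart\in H_n(X)$ by hypothesis, $H_n(X)$ is indeed closed under involution, so the existence condition for $S$ in part~(c) is met. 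With $S$ in hand and the antipode identities guaranteed by the same hypothesis, the tuple $(\mathcal A,\Delta,\epsilon,S)$ satisfies all axioms of a Hopf algebra.

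I do not expect a serious obstacle: essentially all the work has already been done in Proposition~\ref{PropExHopf}, and the corollary amounts to checking that its hypotheses are in force and that its outputs match the definition. The one subtlety worth flagging is the tensor-product issue above---the $C^*$-algebraic $\Delta$ lands a priori in the minimal tensor product, so one should restrict to $\mathcal A$ to obtain a Hopf algebra in the strict algebraic sense. Because the relevant quantum group is of Kac type (the $u_{ij}$ are self-adjoint and, by $\paarpart,\baarpart\in H_n(X)$, the matrix $u$ is orthogonal), the map $S$ is even a bounded $*$-anti-automorphism, so this restriction is harmless.
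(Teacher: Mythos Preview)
Your proof is correct and follows exactly the route the paper intends: the corollary is stated without proof immediately after Proposition~\ref{PropExHopf}, and your argument---invoking parts (a), (b), (c) of that proposition together with Theorem~\ref{ThmPartI}(e) to secure the involution-closure hypothesis needed for the antipode---is precisely the intended one. Your additional remark about passing to the dense $*$-subalgebra $\mathcal A$ to land in the algebraic tensor product is a genuine subtlety the paper glosses over, and it is good that you flag it.
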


\subsection{Existence of a compact matrix quantum group structure}
\label{SectExCMQG}

Since the generators $u_{ij}\in A_n(X)$ are self-adjoint and since the existence of a map $\Delta$ is ensured by  Proposition \ref{PropExHopf}(a) (even in the case $\paarpart,\baarpart\notin X$), the only thing left to prove is the invertibility of the matrix $u$, if we want to equip $A_n(X)$ with a compact matrix quantum group structure.

\begin{lem}\label{LemInvert}
Let $X\subset P$ be $n$-admissible.
\begin{itemize}
\item[(a)] Let  $p\in P(0,l)$ be such that the first point of $p$ is not a singleton. If $p\in H_n(X)$, then $u$ is right invertible.
\item[(b)] Let $q\in P(k,0)$ be such that the last point of $q$ is not a singleton.  If $q\in H_n(X)$, then $u$ is left invertible.
\end{itemize}
\end{lem}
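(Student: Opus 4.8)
The plan is to prove (a) by exhibiting an explicit matrix $v=(v_{mj})$ over $A_n(X)$ with $\sum_m u_{im}v_{mj}=\delta_{ij}$, read straight off the relation $R(p)$, so that $uv=1$; part (b) is then the mirror image and produces a left inverse. The single hypothesis that is really exploited is that the relevant end point of the partition is not a singleton.

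First I would fix the block $V$ of the first point of $p$. Since this point is not a singleton, $V$ contains at least one further point. For fixed indices $i,j\in\{1,\ldots,n\}$ I choose the length-$l$ multi index $\beta=\beta(i,j)$ by setting $\beta_1=i$, setting $\beta_s=j$ for every other point $s\in V$, and assigning to each block of $p$ other than $V$ a fixed value independent of $i,j$. With this choice every block except possibly $V$ is constant by construction, and $V$ is constant exactly when $i=j$; hence $\delta_p(\emptyset,\beta)=\delta_{ij}$.

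Next I expand the right-hand side of $R(p)$ for this $\beta$. In $\sum_{\gamma'}\delta_p(\emptyset,\gamma')u_{\beta_1\gamma_1'}\cdots u_{\beta_l\gamma_l'}$ the factor $\delta_p(\emptyset,\gamma')$ forces $\gamma'$ to be constant on $V$, so writing $m:=\gamma_1'$ the leftmost factor of the (noncommuting) product is $u_{\beta_1\gamma_1'}=u_{im}$. Grouping the sum according to the value $m$ of $\gamma_1'$ lets me pull $u_{im}$ out on the left, giving
\[\delta_{ij}=\sum_{m=1}^n u_{im}\,v_{mj},\qquad v_{mj}:=\sum_{\substack{\gamma':\,\gamma_1'=m\\ \delta_p(\emptyset,\gamma')=1}} u_{\beta_2\gamma_2'}\cdots u_{\beta_l\gamma_l'}.\]
The step I would check most carefully is that $\beta_2,\ldots,\beta_l$ depend only on $j$ and the fixed auxiliary choices, and never on $i$: the index $i$ enters solely through $\beta_1$, which has just been removed together with the leftmost factor. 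Hence $v_{mj}\in A_n(X)$ is well defined and $i$-independent, so $uv=1$ and $u$ is right invertible. (Note that the non-singleton hypothesis is essential here: if $V=\{1\}$, then $\delta_p(\emptyset,\beta)$ would not depend on $\beta_1$ and one could not force $\delta_{ij}$.)

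For part (b) I run the mirror-image argument on $R(q)$ for $q\in P(k,0)$. Let $W$ be the block of the last point of $q$; it contains a further point since it is not a singleton. I choose $\alpha=\alpha(i,j)$ with $\alpha_k=j$, with $\alpha_s=i$ for the other points of $W$, and fixed constants elsewhere, so that $\delta_q(\alpha,\emptyset)=\delta_{ij}$. Now the relevant factor $u_{\gamma_k\alpha_k}=u_{mj}$ (with $m:=\gamma_k$) is the rightmost factor of $u_{\gamma_1\alpha_1}\cdots u_{\gamma_k\alpha_k}$, so grouping by $m$ and factoring it out on the right yields $\sum_m w_{im}u_{mj}=\delta_{ij}$ with $w_{im}$ independent of $j$; thus $wu=1$ and $u$ is left invertible. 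In both parts the only genuine subtlety is the bookkeeping guaranteeing that the index to be eliminated ($i$ in (a), $j$ in (b)) occurs in exactly one, extremal, factor of the product — which is precisely what the non-singleton hypothesis secures.
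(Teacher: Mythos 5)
Your proof is correct and takes essentially the same route as the paper: both read a one-sided inverse directly off the relation $R(p)$ by choosing a lower multi-index $\beta$ with $\beta_1=i$, with $j$ placed in the block of the first point, and with the remaining entries fixed blockwise so that $\delta_p(\emptyset,\beta)=\delta_{ij}$, and then peel off the extremal factor $u_{im}$ (resp.\ $u_{mj}$ in part (b)) to exhibit the inverse matrix. If anything, your choice of putting $j$ on \emph{all} remaining points of the block $V$ is slightly more careful than the paper's, which places $j$ only at a single point $r$ connected to $1$ and hence, as literally written, requires that block to be a pair to make $\delta_p(\emptyset,\beta)=\delta_{ij}$ achievable; apart from this refinement (and your spelling out the mirror argument for (b), which the paper leaves as ``similar''), the two arguments coincide.
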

\begin{proof}
(a) The first point of $p$ is connected to the $r$-th point, for some $1< r\leq l$. Choose indices 
\[\beta_2,\ldots,\beta_{r-1},\beta_{r+1},\ldots,\beta_l\in\{1,\ldots,n\}\]
 such that for all $i,j\in\{1,\ldots,n\}$:
\[\delta_p(\emptyset,(i,\beta_2,\ldots,\beta_{r-1},j,\beta_{r+1},\ldots,\beta_l))=\delta_{ij}\]
Put:
\[t_{ij}:=\sum_{\gamma_2',\ldots,\gamma_l'}\delta_p(\emptyset,(i,\gamma_2',\ldots,\gamma_l'))u_{\beta_2\gamma_2'}\ldots u_{\beta_{r-1}\gamma'_{r-1}}u_{j\gamma'_r}u_{\beta_{r+1}\gamma'_{r+1}}\ldots u_{\beta_l\gamma_l'}\]
Then $\sum_{\gamma_1'} u_{i\gamma_1'}t_{\gamma_1'j}=\delta_{ij}$ by relation $R(p)$, hence $(t_{ij})$ is a right inverse of $u$.

(b) The proof for (b) is similar.
\end{proof}

As an immediate consequence, we obtain:

\begin{thm}\label{ThmExCMQG1}
Let $X\subset P$ be $n$-admissible. Let $p\in P(0,l)$ with $p\neq \singleton^{\otimes l}$ and $q\in P(k,0)$ with $q\neq \downsingleton^{\otimes k}$ for some $k,l\geq 2$. If $p,q\in H_n(X)$, then $(A_n(X),u)$ is a compact matrix quantum group. In particular, if $X$ is a Banica-Speicher category of partitions, $(A_n(X),u)$ is a compact matrix quantum group.
\end{thm}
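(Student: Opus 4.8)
The plan is to verify the three defining properties of a compact matrix quantum group for $(A_n(X),u)$ and to observe that all but one are already in hand. By construction $A_n(X)$ is a unital $C^*$-algebra generated by the $u_{ij}$; since these generators are self-adjoint we have $\bar u=(u_{ij}^*)=u$, so the invertibility requirements on $u$ and on $\bar u$ coincide. The comultiplication $\Delta\colon A_n(X)\to A_n(X)\otimes_{\min}A_n(X)$ with $u_{ij}\mapsto\sum_k u_{ik}\otimes u_{kj}$ is produced, as a unital $^*$-homomorphism, by Proposition~\ref{PropExHopf}(a), which needs nothing beyond $n$-admissibility of $X$. Hence the whole statement reduces to showing that $u$ is invertible in $M_n(A_n(X))$, and I would do this by producing both a right and a left inverse.

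For right invertibility I would feed a suitable partition into Lemma~\ref{LemInvert}(a), which demands a partition in $H_n(X)\cap P(0,l)$ whose first point is not a singleton. Since $p\neq\singleton^{\otimes l}$ with $l\geq 2$, the partition $p$ has a block of size $\geq 2$, and the task is to rotate such a block into first position. By Theorem~\ref{ThmPartI}, $H_n(X)$ is a generalized category of partitions, and as $q\in H_n(X)\cap P(k,0)$ is available, Lemma~\ref{LemWeakRot}(a) lets me weakly line-rotate $p$ within $H_n(X)$. Repeated application realises any cyclic shift of the legs of $p$ while preserving the mutual block structure of the legs that are not moved to the back; choosing a block $\{a_1<\cdots<a_m\}$ with $m\geq2$ and shifting the $a_1-1$ legs preceding $a_1$ to the back brings the former leg $a_1$ to the front, still connected to the former legs $a_2,\ldots,a_m$. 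The resulting $p'\in H_n(X)\cap P(0,l)$ then yields a right inverse of $u$ by Lemma~\ref{LemInvert}(a).

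Left invertibility I would handle symmetrically through Lemma~\ref{LemInvert}(b), which needs a partition in $H_n(X)\cap P(k,0)$ whose last point is not a singleton. Here $q\neq\downsingleton^{\otimes k}$ provides a block of size $\geq 2$ among the upper points, which I would rotate into last position by the top–bottom mirror of the construction in Lemma~\ref{LemWeakRot}(a): interchanging the roles of the cup $p\in P(0,l)$ and the cap $q\in P(k,0)$ gives a weak rotation of the \emph{upper} legs of $q$ that is built only from tensor products, compositions and $\idpart$, and therefore stays inside the generalized category $H_n(X)$. Applying Lemma~\ref{LemInvert}(b) to the rotated partition yields a left inverse $s$ of $u$. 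Since $M_n(A_n(X))$ is a unital ring, a matrix with a left inverse $s$ and a right inverse $t$ satisfies $s=s(ut)=(su)t=t$, so $u$ is invertible; together with the remarks of the first paragraph this shows $(A_n(X),u)$ is a compact matrix quantum group. For the final assertion, a Banica--Speicher category $X$ contains $\paarpart\in P(0,2)$ and $\baarpart\in P(2,0)$, which are neither $\singleton^{\otimes 2}$ nor $\downsingleton^{\otimes 2}$ and have $k=l=2\geq2$, so they serve as the required $p$ and $q$.

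The step I expect to be the main obstacle is arranging a non-singleton leg in the required extremal position, and in particular justifying the upper-leg rotation used for left invertibility: the literal involution of Lemma~\ref{LemWeakRot}(a) would produce $p^*$ and $q^*$, which are \emph{not} assumed to lie in $H_n(X)$ when $\paarpart,\baarpart\notin X$, so I must instead write out the mirror diagram directly and check both that its leg counts match (with $k\neq l$ in general) and that it genuinely performs a weak rotation of the upper legs of $q$. This combinatorial bookkeeping is the delicate point; everything else is routine once the rotated partitions are available.
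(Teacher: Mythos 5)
Your proposal is correct and takes essentially the same route as the paper: Proposition~\ref{PropExHopf}(a) supplies $\Delta$, weak rotation (Lemma~\ref{LemWeakRot}) moves a non-singleton leg of $p$ (resp.\ $q$) into the extremal position, and Lemma~\ref{LemInvert} then gives invertibility of $u$. The point you flag as delicate---that rotating the upper-row partition $q\in P(k,0)$ needs the top--bottom mirror of the construction in Lemma~\ref{LemWeakRot}(a) rather than an appeal to involution---is exactly what the paper compresses into the phrase ``likewise for the last point of $q$,'' and your mirrored diagram (built only from $p$, $q$, $\idpart$, tensor products and compositions) resolves it correctly.
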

\begin{proof}
By weak rotation (Lemma \ref{LemWeakRot}) and since $p\neq\singleton^{\otimes l}$, we may assume that the first point of $p$ is not a singleton; likewise for the last point of $q$. Thus, by Lemma \ref{LemInvert} the matrix $u$ is invertible and using Proposition \ref{PropExHopf}(a), we obtain the result.
\end{proof}

We conclude that there are two ways of obtaining a compact matrix quantum group given a category of partitions $\CC$: Either one takes the road of Tannaka-Krein (Sections \ref{SectTK}, \ref{SectBSQG}), which is the original construction of Banica and Speicher; or one uses our direct algebraic approach.

\section{Enforced orthogonality}
\label{SectEnforcedOrth}

Coming back to one of the main motivations to write this article, let us summarize what we have so far, building on Theorem \ref{ThmExCMQG1} -- how far may we go beyond Banica-Speicher quantum groups?
\begin{itemize}
\item[(1)] Let $\CC\subset P$ be a Banica-Speicher category of partitions, i.e. $p,q\in\CC$ implies $p\otimes q,pq,\tilde p,p^*\in\CC$. Moreover, $\paarpart,\baarpart,\idpart\in\CC$.

Then $A_n(\CC)$ is a compact matrix quantum group. 

In fact it is a Banica-Speicher quantum group.
\item[(2)] Let $\CC\subset P$ be a generalized category of partitions,  i.e. $p,q\in\CC$ implies $p\otimes q,pq,\tilde p\in\CC$ and we have $\idpart\in\CC$. We do not necessarily have $\paarpart,\baarpart\in\CC$, but we assume the much weaker condition that $\CC$ contains partitions $p\in P(0,l)$ with $p\neq \singleton^{\otimes l}$ and $q\in P(0,k)$ with $q\neq \downsingleton^{\otimes k}$.

Then $A_n(\CC)$ is a compact matrix quantum group.

Is it a Banica-Speicher quantum group?
\end{itemize}

A priori, there are many examples of generalized categories of partitions which are not Banica-Speicher categories of partitions,  but which fit into Case (2) above. Consider for instance the categories $NC_{[m]}$ from Proposition \ref{PropExNonBSCateg} -- are the associated compact matrix quantum groups also Banica-Speicher quantum groups? The answer is yes as will be shown in this section. The reason is, that certain relations $R(p)$ for $p\in H_n(X)$ imply $\paarpart,\baarpart\in H_n(X)$ -- and the generator $b_m$ of $NC_{[m]}$ is one of them; thus $A_n(X)=A_n(\CC_X)$ by \cite[Lem. 2.7(e)]{PartI}, where $\CC_X$ is the smallest Banica-Speicher category of partitions containing $X$. However, we will also discuss situations $A_n(X)\neq A_n(\CC_X)$ in Section \ref{SectSummary}.

\subsection{The notion of enforced orthogonality}

\begin{defn}
A partition $p\in P$ is said to \emph{enforce orthogonality}, if 
\begin{itemize}
\item for all $n\in\N$ and all $n$-admissible sets $X\subset P$
\item with $H_n(X)\cap P(0,l)\neq\emptyset$  and $H_n(X)\cap P(k,0)\neq\emptyset$ for some $k,l\in\N$
\item and  $p,p^*\in H_n(X)$,
\item we have $\paarpart,\baarpart\in H_n(X)$.
\end{itemize}
 The set $P_O\subset P$ is by definition the set of all partitions $p\in P$ enforcing orthogonality.
\end{defn}

The following proposition explains the notion of the previous definition as well as our interest in the set $P_O$ -- or rather in its complement.

\begin{prop}\label{PropEnforcedOrth}
Let $p\in P_O$. Let $X\subset P$ be an $n$-admissible set with $H_n(X)\cap P(0,l)\neq\emptyset$  and $H_n(X)\cap P(k,0)\neq\emptyset$ for some $k,l\in\N$.  If $p,p^*\in H_n(X)$, then $H_n(X)$ is a Banica-Speicher category of partitions, $(A_n(X),u)$ is a Banica-Speicher quantum group and the matrix $u$ is orthogonal.
\end{prop}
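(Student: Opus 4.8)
The plan is to recognize that, with the way $P_O$ is defined, almost all of the statement is a direct unwinding of the definition together with results already in place; the only genuine input is the hypothesis $p\in P_O$ itself.

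First I would feed the hypotheses straight into the definition of enforcing orthogonality. We are told that $X$ is $n$-admissible, that $H_n(X)\cap P(0,l)\neq\emptyset$ and $H_n(X)\cap P(k,0)\neq\emptyset$ for some $k,l\in\N$, and that $p,p^*\in H_n(X)$. These are exactly the four bullet conditions appearing in the definition of $P_O$. Since $p\in P_O$, the definition forces $\paarpart,\baarpart\in H_n(X)$. This single deduction carries the entire weight of the proposition.

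Next I would read off the three asserted conclusions. For the category statement, I invoke the rephrasing of \cite[Thm.~2.8]{PartI} (see also Theorem \ref{ThmPartI}): $H_n(X)$ is always a generalized category of partitions, and once $\paarpart,\baarpart\in H_n(X)$ it is in fact a Banica-Speicher category. For orthogonality, $\paarpart,\baarpart\in H_n(X)$ means precisely that the relations $R(\paarpart)$ and $R(\baarpart)$ hold in $A_n(X)$; by Example \ref{ExOrth} these read $uu^t=1$ and $u^tu=1$, and since the generators $u_{ij}$ are self-adjoint we have $\bar u=u$, hence $u^t=u^*$, so that $u$ is orthogonal (in particular invertible).

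Finally, for the Banica-Speicher quantum group structure, I would note that $A_n(X)=A_n(H_n(X))$: imposing the relations $R(p)$ for every $p\in H_n(X)$ adds nothing, since all of these relations hold in $A_n(X)$ by the very definition of $H_n(X)$ (cf.\ \cite[Lem.~2.7]{PartI}). As $H_n(X)$ is now a Banica-Speicher category, Proposition \ref{PropBSisPart} identifies $A_n(H_n(X))$ with the Banica-Speicher quantum group attached to $H_n(X)$ via Tannaka-Krein duality, while invertibility of $u$ (already noted, or via Theorem \ref{ThmExCMQG1}) secures the compact matrix quantum group axioms. Thus $(A_n(X),u)$ is a Banica-Speicher quantum group. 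The point I would emphasize is that the present proposition presents no real obstacle on its own: it is engineered to be a formal corollary of the definition of $P_O$, and the substantive difficulty lies entirely in deciding which partitions actually belong to $P_O$, which is the content of Theorem \ref{ThmCriteria} and the subsequent analysis.
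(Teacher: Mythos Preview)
Your proof is correct and follows essentially the same route as the paper: use the definition of $P_O$ to obtain $\paarpart,\baarpart\in H_n(X)$, then invoke Theorem~\ref{ThmPartI} for the Banica--Speicher category structure, Example~\ref{ExOrth} for orthogonality of $u$, and $A_n(X)=A_n(H_n(X))$ together with Proposition~\ref{PropBSisPart} for the quantum group identification. Your additional remarks (spelling out $u^t=u^*$ from self-adjointness, and the closing commentary on where the real content lies) are accurate elaborations but not needed for the argument.
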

\begin{proof}
From $p,p^*\in H_n(X)$, we infer $\paarpart,\baarpart\in H_n(X)$ by the definition of $P_O$. From Theorem \ref{ThmPartI}, we know that $H_n(X)$ is a Banica-Speicher category of partitions. From Proposition \ref{PropBSisPart}, we know that the Banica-Speicher quantum group associated to $H_n(X)$ is given by $(A_n(H_n(X),u)$, with $A_n(X)=A_n(H_n(X))$ (see also \cite[Lem. 2.7(c)]{PartI}). By Example \ref{ExOrth}, we know that $u$ is orthogonal.
\end{proof}

Enforced orthogonality it is the key feature of partition $C^*$-algebras to be studied when seeking new examples of ``partition''  quantum groups going beyond Banica-Speicher quantum groups. We will use our combinatorial toolbox (Setion \ref{SectCombToolbox}) as well as the following operator algebraic toolbox in order to prove that certain partitions $p_1,\ldots,p_m\in H_n(X)$ imply the existence of other partitions $q_1,\ldots,q_k\in H_n(X)$

\subsection{An operator algebraic toolbox: Selfadjoint idempotents in $C^*$-algebras}

The following innocent lemma on selfadjoint idempotents in $C^*$-algebras is the crucial ingredient for deriving the existence of partitions $q\in H_n(X)$, if we want to go beyond the (purely combinatorial) partition calculus of Section \ref{SectCombToolbox}.

\begin{lem}[Idempotents Lemma]
\label{LemIdem}
Let $A$ be a unital $C^*$-algebra and let $z\in A$ be such that $z=z^*$.
\begin{itemize}
\item[(a)] If $z^k=1$ for some $k\in\N$, then $z^2=1$.
\item[(b)] If $z^k=1$ for some odd number $k\in\N$, then $z=1$.
\item[(c)] If $z$ is positive and $z^k=1$ for some $k\in\N$, then $z=1$.
\end{itemize} 
\end{lem}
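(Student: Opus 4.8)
The plan is to treat all three parts through the continuous functional calculus for the self-adjoint element $z$, so that everything reduces to a statement about the spectrum $\sigma(z)$. The basic observations are that $z=z^*$ forces $\sigma(z)\subset\R$, that positivity of $z$ forces $\sigma(z)\subset[0,\infty)$, and that the spectral mapping theorem for polynomials turns the algebraic hypothesis $z^k=1$ into the numerical constraint $\lambda^k=1$ for every $\lambda\in\sigma(z)$. The last ingredient is that for a self-adjoint element $w$ of a unital $C^*$-algebra one has $\|w\|=r(w)=\sup\{|\lambda|\;:\;\lambda\in\sigma(w)\}$, which is exactly what is needed to upgrade information about the spectrum into an actual identity in $A$.

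I would prove (c) first, since (a) and (b) then reduce to it cheaply. For (c): as $z\geq 0$, its spectrum $\sigma(z)$ is a nonempty compact subset of $[0,\infty)$, and by spectral mapping $\{\lambda^k\;:\;\lambda\in\sigma(z)\}=\sigma(z^k)=\sigma(1)=\{1\}$. The only non-negative real solution of $\lambda^k=1$ is $\lambda=1$, whence $\sigma(z)=\{1\}$ and therefore $\sigma(z-1)=\{0\}$. Since $z-1$ is self-adjoint, $\|z-1\|=r(z-1)=0$, i.e. $z=1$.

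For (a) I would exploit that $z^2=z^*z$ is positive and satisfies $(z^2)^k=z^{2k}=(z^k)^2=1$; applying (c) to $z^2$ gives $z^2=1$ immediately. For (b) I would then feed this back in: writing $k=2m+1$, the relation $z^2=1$ from (a) yields $z^k=(z^2)^m z=z$, and since $z^k=1$ we conclude $z=1$. (Alternatively, (b) follows directly from the spectral picture, because for odd $k$ the only real root of $\lambda^k=1$ is $\lambda=1$.)

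There is no serious obstacle here; the lemma is elementary once the functional-calculus framework is fixed. The only genuine point to be careful about is not to over-conclude: the hypothesis $z^k=1$ by itself pins $\sigma(z)$ down only to the real $k$-th roots of unity, i.e. to $\{1\}$ or $\{\pm 1\}$, and it is precisely the \emph{extra} hypotheses --- positivity in (c), oddness of $k$ in (b) --- that exclude $-1$ and collapse the spectrum to $\{1\}$. The slickest step to get right is the reduction of (a) to (c) via $z^2\geq 0$, which sidesteps arguing separately that $\sigma(z)\subset\{\pm 1\}$ already forces $z^2=1$.
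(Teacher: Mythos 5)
Your proof is correct and takes essentially the same route as the paper: both arguments come down to the fact that, by the continuous functional calculus, the spectrum of the self-adjoint element $z$ consists of real $k$-th roots of unity, which the extra hypotheses (evenness of the exponent, oddness, or positivity) then collapse to $\{-1,1\}$ or $\{1\}$. The only difference is organizational --- the paper reads off all three parts at once from the Gelfand isomorphism $C^*(z,1)\cong C(\spec(z))$, whereas you prove (c) spectrally and then deduce (a) by applying (c) to $z^2\geq 0$ and (b) from (a) algebraically --- a harmless repackaging of the same idea.
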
 
\begin{proof}
Since $z$ is selfadjoint, the $C^*$-subalgebra $C^*(z,1)\subset A$ is commutative. It is therefore isomorphic to the algebra $C(\spec(z))$ of continuous functions on the spectrum $\spec(z)\subset \R$ of $z$. The isomorphism maps $z$ to the identity function $\id$ on $\spec(z)$. From $\id^k=1$ we infer $\spec(z)\subset \{-1,1\}$ if $k$ is even and $\spec(z)=\{1\}$ if $k$ is odd or if $z$ is positive. Thus, $\id^2=1$ in the first case and $\id=1$ in the latter.
\end{proof}

\subsection{Blocks of length greater or equal to three}

The next lemma makes use of the operator algebraic toolbox.

\begin{lem}
\label{LemPisigma}
For $m\geq 2$, the following partitions from $P(m,m)$  are in $P_O$:
\newsavebox{\boxpim}
   \savebox{\boxpim}
   { \begin{picture}(7,3)
      \put(1.1,3){$\circ$}
      \put(2.1,3){$\circ$}
      \put(6.1,3){$\circ$}
      \put(7.1,3){$\circ$}                  
      \put(0,0){\uppartiv{1}{1}{2}{6}{7}}
      \put(3.8,2.3){$\cdots$}
      \put(0,1){\upparti{1}{4}}
      \put(0,4){\partiv{1}{1}{2}{6}{7}}      
      \put(3.8,0){$\cdots$}
      \put(1.1,-0.3){$\circ$}
      \put(2.1,-0.3){$\circ$}
      \put(6.1,-0.3){$\circ$}
      \put(7.1,-0.3){$\circ$}    
     \end{picture}}  
\newsavebox{\boxsigmambogen}
   \savebox{\boxsigmambogen}
   { \begin{picture}(1,3)
       \put(0,0){\line(0,1){0.5}}
       \put(0,1.5){\oval(0.5,2)[r]}
       \put(0,2.5){\line(0,1){0.5}}
     \end{picture}} 
\newsavebox{\boxsigmam}
   \savebox{\boxsigmam}
   { \begin{picture}(9,3)
      \put(1.1,3){$\circ$}
      \put(2.1,3){$\circ$}
      \put(3.1,3){$\circ$}
      \put(7.1,3){$\circ$}                  
      \put(8.1,3){$\circ$}
      \put(9.1,3){$\circ$}                  
      \put(0,0){\uppartii{1}{1}{9}}
      \put(4.8,2.3){$\cdots$}      
      \put(0,1){\upparti{1}{5}}
      \put(2,0){\usebox{\boxsigmambogen}}
      \put(3,0){\usebox{\boxsigmambogen}}
      \put(7,0){\usebox{\boxsigmambogen}}
      \put(8,0){\usebox{\boxsigmambogen}}                  
      \put(0,4){\partii{1}{1}{9}}  
      \put(4.8,0){$\cdots$}           
      \put(1.1,-0.3){$\circ$}
      \put(2.1,-0.3){$\circ$}
      \put(3.1,-0.3){$\circ$}
      \put(7.1,-0.3){$\circ$}                  
      \put(8.1,-0.3){$\circ$}
      \put(9.1,-0.3){$\circ$}                  
     \end{picture}}        
\begin{center}
\begin{picture}(22,4)
 \put(0,1.5){$\pi_m=$}
 \put(1,0.2){\usebox{\boxpim}}
 \put(11,1.5){$\sigma_m=$}
 \put(12,0.2){\usebox{\boxsigmam}}
\end{picture}
\end{center}
\end{lem}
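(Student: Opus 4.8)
The plan is to show that the defining relations $R(\pi_m)$ (respectively $R(\sigma_m)$) force $u$ to be orthogonal, i.e.\ $R(\paarpart)$ and $R(\baarpart)$. Since $\pi_m^*=\pi_m$ and $\sigma_m^*=\sigma_m$, the requirement ``$p,p^*\in H_n(X)$'' in the definition of $P_O$ reduces to $\pi_m\in H_n(X)$ (resp.\ $\sigma_m\in H_n(X)$). First I would record that, under the standing assumptions $H_n(X)\cap P(0,l)\neq\emptyset$ and $H_n(X)\cap P(k,0)\neq\emptyset$ together with the non-singleton blocks of $\pi_m,\sigma_m$, the pair $(A_n(X),u)$ is a compact matrix quantum group by Theorem \ref{ThmExCMQG1}; in particular $u$ is invertible and $A_n(X)$ carries a Haar state $h$ with $(h\otimes\id)\circ\Delta=h(\,\cdot\,)1$. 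These two features -- invertibility and the Haar state -- are exactly what will pin down the scalar appearing below.

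The key combinatorial step is to collapse the length-$m$ blocks to length-$2$ ones. For this I would apply the weak restriction construction from the end of Section \ref{SectDoubling} to $q=\pi_m\in P(m,m)$, using partitions $p_1\in H_n(X)\cap P(0,l)$ and $p_2\in H_n(X)\cap P(k,0)$ to cap all but two consecutive legs on each side. Because the upper block and the lower block of $\pi_m$ are mutually disconnected, the capped legs only produce internal loops, and the surviving partition is the two-block partition $\pi_2=\{\text{upper }1,2\}\cup\{\text{lower }1,2\}\in H_n(X)\cap P(2,2)$. For $\sigma_m$ one first performs a line rotation (Lemma \ref{LemWeakRot}) bringing the two points of the non-through block into consecutive position, and then the same weak restriction again yields $\pi_2$, the through-strands being killed by the caps. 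This is precisely the place where the hypotheses $H_n(X)\cap P(0,l),P(k,0)\neq\emptyset$ enter the argument.

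It then remains to exploit $R(\pi_2)$. Writing it out for the two-block partition gives $\sum_c u_{ca_1}u_{ca_2}=0$ whenever $a_1\neq a_2$ and $\sum_c u_{b_1c}u_{b_2c}=0$ whenever $b_1\neq b_2$, while the diagonal values all coincide, say $\sum_c u_{ca}^2=\mu=\sum_c u_{bc}^2$ for all $a,b$. In matrix form this reads $u^*u=\mu 1=uu^*$ with $\mu=\mu^*\geq 0$. Since $u$ is invertible, $\mu$ is invertible, hence $h(\mu)>0$. A short computation from $u^*u=\mu 1$ shows $\Delta(\mu)=\mu\otimes\mu$, so applying the Haar state gives $h(\mu)\,\mu=(h\otimes\id)\Delta(\mu)=h(\mu)1$, whence $\mu=1$. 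Therefore $u^*u=uu^*=1$, i.e.\ $R(\paarpart)$ and $R(\baarpart)$ hold (Example \ref{ExOrth}), proving $\pi_m,\sigma_m\in P_O$.

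The main obstacle I expect is the precise combinatorial identification of the weak restriction: verifying that capping the superfluous legs of $\pi_m$ (and, after rotation, of $\sigma_m$) genuinely returns $\pi_2$ rather than, say, the full four-point block or the identity partition, and that the outcome is insensitive to the choice of $p_1,p_2$. The analytic half is comparatively soft: once the block length has been reduced to two, the normalization $\mu=1$ is forced by the group-like identity $\Delta(\mu)=\mu\otimes\mu$ together with invertibility and the Haar state, both supplied by Theorem \ref{ThmExCMQG1}. A secondary point to treat with care is the production of the non-singleton partitions in $P(0,l)$ and $P(k,0)$ needed to invoke Theorem \ref{ThmExCMQG1} at the outset; these can be obtained by capping the free block of $\pi_m$ (resp.\ $\sigma_m$) against the hypothesized partitions.
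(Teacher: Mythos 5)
Your proof collapses at the very first combinatorial step, because you have misread both partitions. In the paper's pictures $\pi_m$ and $\sigma_m$ each carry a vertical middle line joining the upper horizontal line to the lower one: $\pi_m$ is the partition in $P(m,m)$ in which \emph{all} $2m$ points form a \emph{single} block (so $\pi_2=\vierpartrot$, the one-block partition on four points), and in $\sigma_m$ the four corner points form one block of size four, the remaining points being through-pairs. There are no mutually disconnected upper and lower blocks anywhere. Since tensor product and composition can only join blocks, never split them, no weak restriction (nor any other tool from Section \ref{SectCombToolbox}) applied to $\pi_m$ can produce your partition consisting of a disconnected upper pair and a disconnected lower pair; what survives a weak restriction of $\pi_m$ to two consecutive legs is the connected four-block $\vierpartrot$, and a weak restriction of $\sigma_m$ to two consecutive legs is even just $\idpart^{\otimes 2}$, since the construction of Section \ref{SectDoubling} caps end segments of legs, not the middle segment separating the two corners. (Also, Lemma \ref{LemWeakRot} rotates partitions in $P(0,l)$ only, so ``line rotating'' $\sigma_m\in P(m,m)$ is not licensed by anything in the paper.) This misreading destroys the analytic half of your argument: the relations $R(\vierpartrot)$ are $u_{ik}u_{jk}=u_{ki}u_{kj}=0$ for $i\neq j$, and they carry \emph{no} information about the diagonal sums $\sum_c u_{ca}^2$. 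Hence there is no common element $\mu$ with $u^tu=uu^t=\mu 1$, no group-like element, and your Haar-state normalization has nothing to act on. (Had the relations genuinely been $u^tu=uu^t=\mu 1$, your normalization argument would indeed work; the problem is that they are not.)

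The paper's proof is structured quite differently, precisely because of this. For $\pi_m$: compose $p^{\otimes m}$, with $p\in H_n(X)\cap P(0,l)$, with copies of $\pi_m$ to build the one-block partition $b_{lm}\in H_n(X)$; extract $\vierpartrot$ by shifted doubling (Lemma \ref{LemWeakDoubling}); then, instead of a Haar-state argument, use $R(\vierpartrot)$ together with $R(b_l)$ to see that $z_i:=\sum_k u_{ik}$ satisfies $z_i^l=\sum_k u_{ik}^l=1$ (all cross terms vanish), so the Idempotents Lemma \ref{LemIdem} gives $z_i^2=\sum_k u_{ik}^2=1$, i.e.\ exactly $R(\paarpart)$, and likewise $R(\baarpart)$. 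For $\sigma_m$ the reduction is not combinatorial at all but operator-algebraic: substituting $u_{a_{m-2}b_{m-2}}=u_{jk}$ into $R(\sigma_m)$ and using the $C^*$-property $xx^*=0\Rightarrow x=0$ yields $R(\sigma_{m-1})$, and induction terminates at $R(\sigma_2)=R(\vierpartrot)$. Note that the paper's route never needs the quantum group structure, the invertibility of $u$, or the Haar state; the only analytic input is Lemma \ref{LemIdem}. To repair your write-up you would have to (i) work with the correct, connected partitions, and (ii) replace the group-like/Haar-state normalization by an argument compatible with $R(\vierpartrot)$ --- which is exactly the paper's $b_l$-plus-Idempotents-Lemma step.
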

\begin{proof}
Let $n\in\N$ and let $X\subset P$ be $n$-admissible. Let $p\in H_n(X)\cap P(0,l)\neq\emptyset$  and $q\in H_n(X)\cap P(k,0)\neq\emptyset$ for some $k,l\in\N$. 

(1) We first prove that $\vierpartrot\in H_n(X)$ implies $\paarpart,\baarpart\in H_n(X)$. Composing $p$ iteratively with suitable tensor products of $\vierpartrot$ and $\idpart$, we infer that the partition $b_l\in P(0,l)$ consisting in a single block on $l$ points is in $H_n(X)$, see also \cite[Lemma B.2]{PartI}. Thus $\sum_k u_{ik}^l=1$ in $A_n(X)$, using relations $R(b_l)$, see also \cite[App. A]{PartI}. Moreover, recall from \cite[App. A]{PartI} that we have:
\[R(\vierpartrot): \qquad u_{ik}u_{jk}=u_{ki}u_{kj}=0\quad \textnormal{if }i\neq j\]
So, all we have to prove is $\sum_k u_{ik}^2=\sum_k u_{kj}^2=1$ in order to verify that the relations $R(\paarpart)$ and $R(\baarpart)$ hold. Put $z_i:=\sum_k u_{ik}$. Then $z_i$ is selfadjoint and we have, using $R(\vierpartrot)$ and $R(b_l)$:
\[z_i^l=\sum_{k_1,\ldots,k_l}u_{ik_1}\ldots u_{ik_l}=\sum_k u_{ik}^l =1\]
By the Idempotents Lemma \ref{LemIdem}, we infer $1=z_i^2=\sum_{k_1,k_2}u_{ik_1}u_{ik_2}=\sum_k u_{ik}^2$. Likewise we prove $\sum_k u_{kj}^2=1$.

(2) For a general partition $\pi_m\in H_n(X)$, note that composing $p^{\otimes m}$ with suitable tensor products of $\pi_m$ and $\idpart$ yields $b_{lm}\in H_n(X)$. By shifted doubling on two legs (Lemma \ref{LemWeakDoubling}) we obtain $\vierpartrot\in H_n(X)$ and we are back to Step (1).

(3) We observe that $\sigma_m\in H_n(X)$ means that the following relations hold in $A_n(X)$:
\[R(\sigma_m):\quad  u_{ik}u_{a_1b_1}u_{a_2b_2}\ldots u_{a_{m-2}b_{m-2}}u_{jk}=u_{ki}u_{a_1b_1}u_{a_2b_2}\ldots u_{a_{m-2}b_{m-2}}u_{kj}=0\quad \textnormal{if }i\neq j\]
Now, given $i\neq j$, we put $u_{a_{m-2}b_{m-2}}=u_{jk}$ and thus:
\[ u_{ik}u_{a_1b_1}u_{a_2b_2}\ldots u_{a_{m-3}b_{m-3}}u_{jk}u_{jk}=0\]
Therefore,
\[ \left(u_{ik}u_{a_1b_1}u_{a_2b_2}\ldots u_{a_{m-3}b_{m-3}}u_{jk}\right)\left(u_{jk} u_{a_{m-3}b_{m-3}}\ldots u_{a_2b_2}u_{a_1b_1}u_{ik} \right)=0,\]
which implies that the relations $R(\sigma_{m-1})$ hold, after a similar computation with $u_{a_{m-2}b_{m-2}}=u_{kj}$. Inductively, we obtain the relations $R(\sigma_2)=R(\vierpartrot)$ and we are back to Step (1).
\end{proof}

The next lemma is a technical generalization of the previous lemma, which is needed for Lemma \ref{LemThreeBlock}. 

\begin{lem}\label{LemProjIsPO}
Let $q\in P(s,s)$ be a partition such that 
\begin{itemize}
\item[(i)] $q=q^*=q^2$,
\item[(ii)] the upper left, the upper right, the lower left and the lower right points of $q$ form a block of size four,
\item[(iii)] all other blocks of $q$ are of size two.
\end{itemize}
Then $q\in P_O$.
\end{lem}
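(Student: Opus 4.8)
The plan is to reduce the statement to Lemma \ref{LemPisigma}: I will show that, under the hypotheses, the relation $R(\sigma_m)$ (or already $R(\vierpartrot)$) holds in $A_n(X)$ for a suitable $m$, so that $\sigma_m\in H_n(X)$ and hence $\paarpart,\baarpart\in H_n(X)$ because $\sigma_m\in P_O$. Throughout, I fix $n$, an $n$-admissible $X$ with $p_1\in H_n(X)\cap P(0,l)$ and $p_2\in H_n(X)\cap P(k,0)$ as provided by the definition of $P_O$, and I note that since $q=q^*$ the assumption reduces to $q\in H_n(X)$.

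First I would pin down the shape of $q$. Write the points as $U_1,\dots,U_s$ (upper) and $L_1,\dots,L_s$ (lower). By (ii) the four corners $\{U_1,U_s,L_1,L_s\}$ form the unique block of size four, and by (i) together with Lemma \ref{LemProjPart} the whole diagram is invariant under the up-down flip, while no size-two block may contain a diagonal pair $U_r,L_{r'}$ with $r\neq r'$ (such a pair forces a block of size at least four). Hence every non-corner block is either a \emph{through-string} $\{U_r,L_r\}$ or a \emph{parallel pair} $\{U_r,U_{r'}\}$ together with its mirror $\{L_r,L_{r'}\}$.

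The core mechanism is an index substitution in $R(q)$ that exploits the corner block. Since $U_1,U_s,L_1,L_s$ lie in one block, $\delta_q(\gamma,\beta)$ vanishes unless $\beta_1=\beta_s$; choosing $\beta_1\neq\beta_s$ therefore kills the left-hand side of $R(q)$, so the right-hand side $\sum_{\gamma'}\delta_q(\alpha,\gamma')u_{\beta_1\gamma_1'}\cdots u_{\beta_s\gamma_s'}$ must vanish as well, where on it the corner block forces $\alpha_1=\alpha_s=:a$ and ties $\gamma_1'=\gamma_s'=a$. If every non-corner block is a through-string, the remaining indices are determined by $\gamma_r'=\alpha_r$, and the surviving relation reads
\[ u_{\beta_1 a}\,u_{\beta_2\alpha_2}\cdots u_{\beta_{s-1}\alpha_{s-1}}\,u_{\beta_s a}=0\qquad(\beta_1\neq\beta_s), \]
with $\beta_2,\dots,\beta_{s-1},\alpha_2,\dots,\alpha_{s-1}$ arbitrary; this is exactly $R(\sigma_s)$ (its symmetric half $u_{ki}\cdots u_{kj}=0$ is obtained by instead choosing $\alpha_1\neq\alpha_s$ and keeping the left-hand side). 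By Lemma \ref{LemPisigma} we then have $\sigma_s\in P_O$, and we are done in this case.

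The main obstacle -- and precisely what this lemma adds over Lemma \ref{LemPisigma} -- is the presence of parallel pairs: a block $\{L_r,L_{r'}\}$ carries no upper point, so in the computation above its column index becomes a free summation variable and the surviving relation acquires inner factors $\sum_c u_{\beta_r c}u_{\beta_{r'}c}$ that no choice of $\alpha,\beta$ removes. My plan is to eliminate the whole middle \emph{before} extracting the relation: the middle legs $2,\dots,s-1$ form a contiguous region, so I would cap them off on both lines by sandwiching $q$ between $\idpart^{\otimes 1}\otimes(\cdot)\otimes\idpart^{\otimes 1}$ and suitable tensor powers and nestings of $p_1$ and $\tilde p_2$, using the weak-restriction and doubling constructions of Section \ref{SectDoubling}; capping a cup against a cap produces a removable closed loop, so the corner block survives alone and $q$ collapses to $\vierpartrot$ (capping only the parallel-pair legs instead leaves a $\sigma_m$-type partition, which is equally good). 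The delicate points I expect to fight with are matching leg counts -- from $p_1$ one naturally obtains only $P(0,\cdot)$-cappers whose length is a multiple of $l$ -- and keeping the two extreme corner legs uncapped; should the exact length fail to be available, the fallback is to first build a single-block partition $b_N\in H_n(X)$ from $q$ and $p_1$ as in step~(2) of the proof of Lemma \ref{LemPisigma} and then close the residual summations with the Idempotents Lemma \ref{LemIdem}, exactly as in step~(1) there.
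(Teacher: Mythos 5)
Your high-level target coincides with the paper's (manufacture $\sigma_m$, or $\vierpartrot=\sigma_2$, inside $H_n(X)$ and invoke Lemma \ref{LemPisigma}), your structural analysis of $q$ via Lemma \ref{LemProjPart} is correct, and the through-string-only case is indeed immediate (there $q=\sigma_s$ literally as a partition, so no index substitution is even needed). The gap lies in the only case that carries content: parallel pairs. Your capping plan is combinatorially sound in the sense that \emph{if} one had $c_A\in H_n(X)\cap P(s-2,0)$ and $c_B\in H_n(X)\cap P(0,s-2)$, then $(\idpart\otimes c_A\otimes\idpart)\,q\,(\idpart\otimes c_B\otimes\idpart)=\vierpartrot$ no matter what $c_A,c_B$ are. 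But nothing guarantees such cappers exist, and this is not a bookkeeping nuisance: the quantity $(\#\textnormal{upper points})-(\#\textnormal{lower points})$ is additive under tensor products and compositions and invariant under vertical reflection, so every partition you can certify to lie in $H_n(X)$ --- i.e., anything built from $q\in P(s,s)$, $p_1\in P(0,l)$, $p_2\in P(k,0)$ and $\idpart$ --- sits in some $P(a,b)$ with $a-b\equiv 0\pmod{\gcd(k,l)}$. Hence no capper of length $s-2$ (nor one matching the parallel-pair legs) can be produced whenever $\gcd(k,l)\nmid s-2$, e.g. $k=l=4$ and $s$ odd; and since $P_O$ quantifies over \emph{all} admissible $X$, your proof must survive exactly this situation. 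The weak restriction of Section \ref{SectDoubling} cannot rescue the plan: it caps a prefix and a suffix, absorbing leg-count mismatches by $\idpart$-padding on the \emph{outside}, which is precisely what is unavailable when the region to be capped lies strictly between the two corner legs you must preserve.

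The fallback fares no better. Step (2) of the proof of Lemma \ref{LemPisigma} produces $b_{lm}$ because $\pi_m$ joins every point it touches into one block \emph{and keeps them connected downwards}; a parallel pair of $q$ does the opposite --- it merges the two blocks above it and then severs them from below, depositing a fresh block of size two on the bottom row. Consequently, composing shifted copies of $q$ below $p_1^{\otimes s}$ can never terminate in a single-block partition $b_N$ with $N\geq 3$: the bottom-most copy of $q$ always leaves a size-two block in the output. So ``exactly as in step (2) there'' does not apply, and no substitute mechanism is offered. That missing mechanism is the actual content of the lemma: the paper supplies it with the triple composition $q'=(\idpart^{\otimes r'}\otimes q)(q\otimes\idpart^{\otimes r'})(\idpart^{\otimes r'}\otimes q)$, where $r'$ marks the first non-through-string block, together with a three-fold minimality assumption (minimal $s$, then minimal number of pair through blocks, then minimal $r'$) and a case analysis in which every configuration except ``$a_0$ meets $b_0$'' contradicts minimality, whence $\sigma_{r'+1}\in H_n(X)$ by weak restriction. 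Your proposal would need to reconstruct an argument of this strength to close the gap.
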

\begin{proof}
Let $n\in\N$, let $X\subset P$ be $n$-admissible with $H_n(X)\cap P(0,l)\neq\emptyset$ and $H_n(X)\cap P(k,0)\neq\emptyset$ for some $k,l$. Assume $q\in H_n(X)$. We shall prove that $\sigma_m\in H_n(X)$ holds for some $m$, which together with Lemma \ref{LemPisigma} implies $q\in P_O$. By Lemma \ref{LemProjPart} and assumption (iii), we know that if a block of $q$ connects an $i$-th upper point, $1<i<s$, to a $j$-th lower point, then $i=j$. Such a block will be called a \emph{pair through block}. If all blocks of size two are pair through blocks, then $q=\sigma_s$ and we are done. So, assume $q\neq \sigma_s$ and let $1\leq r'<s$ be the smallest number such that the block containing $r'+1$ is not a pair through block.
We may assume the following minimality for $q$ amongst all partitions in $H_n(X)$.
\begin{itemize}
\item[(M1)] The number $s$ is minimal amongst all partitions in $H_n(X)$ satisfying (i), (ii) and (iii).
\item[(M2)] The number $r$ of pair through blocks is minimal amongst all partitions $p$ in $H_n(X)\cap P(s,s)$ satisyfying (M1).
\item[(M3)] The number $r'$ is minimal amongst all partitions satisfying (M2).
\end{itemize}
We define:
\[q':=\left(\idpart^{\otimes r'}\otimes q\right)\left(q\otimes\idpart^{\otimes r'}\right)\left(\idpart^{\otimes r'}\otimes q\right)\]
Then $q'\in H_n(X)$ by Theorem \ref{ThmPartI}. 

We require some technical notation in order to study the above composition 
 in detail.
\begin{itemize}
\item Let $h^{\textnormal{up}}\in P(s-1,s-1)$ be the partition obtained from restricting $\idpart^{\otimes r'}\otimes q$ to the points $\{r'+1,\ldots,r'+s-1\}$. Note that all blocks of $h^{\textnormal{up}}$ are of size two.
\item Let $h_{\textnormal{down}}\in P(s-1,s-1)$ be the partition obtained from restricting $q\otimes\idpart^{\otimes r'}$ to the points $\{r'+1,\ldots,r'+s-1\}$. Again, all blocks of $h_{\textnormal{down}}$ are of size two.
\item Let $1<a_1,\ldots,a_v\leq s-1$ be the upper points of $h^{\textnormal{up}}$ belonging to pair through blocks. Note that $a_o:=1$ also belongs to a pair through block by assumption (ii).
\item Let $b_0:=s-r'$. This upper point of $h_{\textnormal{down}}$ belongs to a pair through block by assumption (ii).
Let $1<b_1,\ldots,b_w\leq s-1$ be the remaining upper points of $h_{\textnormal{down}}$ belonging to pair through blocks. 
\end{itemize}
Since $r'-1$ pair through blocks of $q$ sit on the points $2,\ldots,r'$, by definition $h_{\textnormal{down}}$ has the same number of pair through blocks as $h^{\textnormal{up}}$, so $v=w$. The above notation as well as the strategy of the proof is illustrated by the following picture:
   \savebox{\boxsigmambogen}
   { \begin{picture}(1,3)
       \put(0,0){\line(0,1){0.5}}
       \put(0,1.5){\oval(0.5,2)[r]}
       \put(0,2.5){\line(0,1){0.5}}
     \end{picture}} 
\newsavebox{\boxuppaarbogen}
   \savebox{\boxuppaarbogen}
   { \begin{picture}(2,1)
       \put(0,0){\line(1,0){0.5}}
       \put(1,0){\oval(1,0.5)[t]}
       \put(1.5,0){\line(1,0){0.5}}
     \end{picture}} 
\newsavebox{\boxuppaar}
   \savebox{\boxuppaar}
   { \begin{picture}(2,1)
      \put(0,0){\line(0,1){0.5}}
      \put(-0.25,0){\usebox{\boxuppaarbogen}}
      \put(2,0){\line(0,1){0.5}}
     \end{picture}} 
\newsavebox{\boxdownpaarbogen}
   \savebox{\boxdownpaarbogen}
   { \begin{picture}(2,1)
       \put(0,0){\line(1,0){0.5}}
       \put(1,0){\oval(1,0.5)[b]}
       \put(1.5,0){\line(1,0){0.5}}
     \end{picture}} 
\newsavebox{\boxdownpaar}
   \savebox{\boxdownpaar}
   { \begin{picture}(2,1)
      \put(0,0){\line(0,1){0.5}}
      \put(-0.25,0.5){\usebox{\boxdownpaarbogen}}
      \put(2,0){\line(0,1){0.5}}
     \end{picture}} 
\newsavebox{\boxqsigmam}
   \savebox{\boxqsigmam}
   { \begin{picture}(14,4)
      \put(1,3.3){\colorbox{hellgrau}{\qquad\qquad\qquad\qquad\qquad\qquad\quad\quad\;\;\;}}
      \put(1,2.9){\colorbox{hellgrau}{\qquad\qquad\qquad\qquad\qquad\qquad\quad\quad\;\;\;}}      
      \put(1,2.5){\colorbox{hellgrau}{\qquad\qquad\qquad\qquad\qquad\qquad\quad\quad\;\;\;}}
      \put(1,2.1){\colorbox{hellgrau}{\qquad\qquad\qquad\qquad\qquad\qquad\quad\quad\;\;\;}}      
      \put(1,1.7){\colorbox{hellgrau}{\qquad\qquad\qquad\qquad\qquad\qquad\quad\quad\;\;\;}}
      \put(1,1.3){\colorbox{hellgrau}{\qquad\qquad\qquad\qquad\qquad\qquad\quad\quad\;\;\;}}      
      \put(1,0.9){\colorbox{hellgrau}{\qquad\qquad\qquad\qquad\qquad\qquad\quad\quad\;\;\;}}
      \put(1,0.5){\colorbox{hellgrau}{\qquad\qquad\qquad\qquad\qquad\qquad\quad\quad\;\;\;}}      
      \put(1,0.1){\colorbox{hellgrau}{\qquad\qquad\qquad\qquad\qquad\qquad\quad\quad\;\;\;}} 
      \put(1,-0.3){\colorbox{hellgrau}{\qquad\qquad\qquad\qquad\qquad\qquad\quad\quad\;\;\;}}                

      \put(1.1,3){$\circ$}
      \put(2.1,3){$\circ$}
      \put(5.1,3){$\circ$}
      \put(6.1,3){$\circ$}                  
      \put(7.1,3){$\circ$}
      \put(8.1,3){$\circ$}                  
      \put(10.1,3){$\circ$}                        
      \put(13.1,3){$\circ$}   
      \put(0,0){\upparti{3}{1}}
      \put(0,0){\upparti{3}{13}}
      \put(1.3,1.5){\line(1,0){12}}
      \put(3.3,2.3){$\cdots$}      
      \put(2,0){\usebox{\boxsigmambogen}}
      \put(5,0){\usebox{\boxsigmambogen}}
      \put(7,0){\usebox{\boxsigmambogen}}
      \put(10,0){\usebox{\boxsigmambogen}}   
      \put(6,2.5){\usebox{\boxuppaar}}
      \put(6,0){\usebox{\boxdownpaar}}      
      \put(3.3,0){$\cdots$}    
      \put(1.1,-0.4){$\circ$}
      \put(2.1,-0.4){$\circ$}
      \put(5.1,-0.4){$\circ$}
      \put(6.1,-0.4){$\circ$}                  
      \put(7.1,-0.4){$\circ$}
      \put(8.1,-0.4){$\circ$}                  
      \put(10.1,-0.4){$\circ$}
      \put(13.1,-0.4){$\circ$}    
     \end{picture}}        
\newsavebox{\boxhup}
   \savebox{\boxhup}
   { \begin{picture}(19,4)
 \put(1.05,9){$\circ$}
 \put(2.05,9){$\circ$}
 \put(5.05,9){$\circ$}  
 \put(0,6){\upparti{3}{1}}
 \put(0,6){\upparti{3}{2}}
 \put(3.3,7.3){$\cdots$}
 \put(0,6){\upparti{3}{5}}  
 \put(1.05,5.6){$\circ$}
 \put(2.05,5.6){$\circ$}
 \put(5.05,5.6){$\circ$}  
 \put(4.7,6){\usebox{\boxqsigmam}}
 \put(1,9.7){1}
 \put(2,9.7){2}
 \put(5,9.7){$r'$} 
 \put(6,9.7){$a_0$}
  \put(6.05,9.7){$a_0$}
 \put(7,9.7){$a_1$}
 \put(9.7,9.7){$a_{r'-1}$}
 \put(12,9.7){$a_{r'}$}
 \put(15,9.7){$a_v$}    
 \put(19.2,7.3){{\color{hgrau} $q$}}   
 \put(5.8,5.4){\dashbox{0.1}(12,5.1){}} 
 \put(17.9,10.1){$h^{\textnormal{up}}$}
     \end{picture}}        
\newsavebox{\boxhdown}
   \savebox{\boxhdown}
   { \begin{picture}(19,4)
 \put(-0.3,0.5){\usebox{\boxqsigmam}}
 \put(14.1,3.5){$\circ$}
 \put(17.1,3.5){$\circ$}
 \put(18.1,3.5){$\circ$}
 \put(1.05,0.5){\upparti{3}{13}}
 \put(15.35,1.8){$\cdots$}
 \put(1.05,0.5){\upparti{3}{16}}
 \put(1.05,0.5){\upparti{3}{17}}  
 \put(14.1,0.1){$\circ$}
 \put(17.1,0.1){$\circ$}
 \put(18.1,0.1){$\circ$} 
 \put(19.2,7.3){{\color{hgrau} $q$}}   
 \put(5.8,5.4){\dashbox{0.1}(12,5.1){}} 
 \put(17.9,10.1){$h^{\textnormal{up}}$}
 \put(7,4.2){$b_1$}
 \put(13,4.2){$b_0$}
  \put(13.05,4.2){$b_0$}  
 \put(14,4.2){$b_i$}
 \put(17,4.2){$b_v$} 
 \put(5.8,-0.1){\dashbox{0.1}(12,5.1){}}  
 \put(17.9,4.5){$h_{\textnormal{down}}$}
 \put(0,1.8){{\color{hgrau} $q$}} 
     \end{picture}}        
\begin{center}
\begin{picture}(23,17)
 \put(0,7.8){$q'=$}
 \put(4,6){\usebox{\boxhup}}
 \put(4,6){\usebox{\boxhdown}} 
 \put(4,-5){\usebox{\boxhup}}
\end{picture}
\end{center}

Since $h^{\textnormal{up}}$ and $h_{\textnormal{down}}$ are pair partitions (i.e. all blocks are of size two), their composition is a pair partition, too. Hence, $a_0$ is either connected to some $a_i$, or to some $b_i$ throughout the composition of $h^{\textnormal{up}}$ and $h_{\textnormal{down}}$.

\emph{Step 1.} If $a_0$ was connected to some $a_i$, the $(r'+1)$-th upper point and the upper point $a_i+r'$ of $q'$ would be in the same block, as well as the $(r'+1)$-th lower point and the lower point $a_i+r'$ of $q'$. Using a weak restriction as in Section \ref{SectDoubling}, we would obtain a partition $q''\in P(a_i,a_i)$ in $H_n(X)$ satisfying (i), (ii) and (iii), in contradiction to the minimality condition (M1).

\emph{Step 2a.} If $a_0$ was connected to some $b_i$, $i\neq 0$, and some $a_i$ was connected to some $a_j$, this would yield a partition in $H_n(X)$ contradicting (M2).

\emph{Step 2b.} If $a_0$ was connected to some $b_i$, $i\neq 0$, and no $a_i$ was connected to some $a_j$, then some $a_i$ was connected to $b_0$. Then, $q'$ would be a partition such that the left upper point and the upper point $a_i+r'$ are in the same block with their counterparts on the lower line; hence we would have a  contradiction to (M1) (in case $a_i+r'$ is on a position smaller than $s$) or to (M3) (in case $a_i+r'$ is on the $s$-th position and then cutting all points of the right of $s$, turning the block of the $(r'+1)$-point into a pair through block).

\emph{Step 3.} We infer that $a_0$ must be connected to $b_0$, which yields $\sigma_{r'+1}\in H_n(X)$ when weakly restricting (Section \ref{SectDoubling}) $q'$ to its first $r'+1$ points.
\end{proof}

\begin{lem}
\label{LemThreeBlock}
Let $p\in P(0,l)$ be a partition containing no singletons, but containing a block of length at least three. Then $p\in P_O$.
\end{lem}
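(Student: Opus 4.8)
The plan is to reduce the statement to the projection criterion of Lemma~\ref{LemProjIsPO}. Fix an $n$-admissible $X$ as in the definition of $P_O$ with $p,p^*\in H_n(X)$; I must produce $\paarpart,\baarpart\in H_n(X)$. By Lemma~\ref{LemProjIsPO} it suffices to exhibit inside $H_n(X)$ a single partition $q\in P(s,s)$ with $q=q^*=q^2$, whose four corner points form one block of size four and all of whose other blocks have size two: such a $q$ lies in $P_O$, and since $X$ meets the nonemptiness hypotheses of the definition this already forces orthogonality, i.e.\ $\paarpart,\baarpart\in H_n(X)$.

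To build $q$ I would use partial doubling. As $p$ contains a block of size at least three it is not $\singleton^{\otimes l}$, so Lemmas~\ref{LemPartDoubling} and~\ref{LemPartDoublingProj} put every partial doubling of $p$ into $H_n(X)$ as a projection. The block structure of the partial doubling on the first $s$ legs can be read directly from $p$ (see the remark preceding Lemma~\ref{LemPartDoubling}): a leg $\le s$ that is joined in $p$ to a leg $>s$ becomes a pair-through (its upper and lower copies are linked), while two legs $\le s$ lying in a common block of $p$ stay linked on both lines. Using line rotation --- Lemma~\ref{LemWeakRot}(b), applicable since $p^*\in P(l,0)\cap H_n(X)$ and $(p^*)^*=p\neq\singleton^{\otimes l}$ --- I may place two points of a size-$\ge 3$ block at positions $1$ and $s$ with a further point of that block beyond $s$; then both corners are pair-throughs and are mutually linked, so the four corner points of $q$ close up into a single block of size four. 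This gives condition~(ii) of Lemma~\ref{LemProjIsPO}.

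The real heart of the argument is condition~(iii): all remaining blocks of $q$ must have size two. A block meeting the open range $\{2,\dots,s-1\}$ in exactly one leg produces a harmless pair-through, and a size-two block lying wholly inside produces two harmless size-two blocks; the only way to manufacture a bigger block of $q$ is to trap two legs of a single block of size $\ge 3$ inside the range (or to trap a whole size-$\ge 3$ block there). I would exclude this by an \emph{extremal choice} of the two corner points. Among all pairs of cyclically consecutive points lying in a common block of size $\ge 3$ --- a nonempty set, since $p$ has such a block --- choose the pair $w,w'$ whose interior arc $I$ is shortest, and after rotation let the first $s$ legs be $w$, then $I$, then $w'$ (so $s=|I|+2$). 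If some size-$\ge 3$ block had two legs inside $I$, or lay entirely inside $I$, it would possess two cyclically consecutive points whose interior arc is strictly contained in $I$, contradicting the minimality of $I$; hence no offending configuration survives and the doubling is clean. The chosen block has a third point outside $I$, so it genuinely extends beyond $s$ and the corner block really has size four. Lemma~\ref{LemProjIsPO} then applies and yields $p\in P_O$; in the degenerate case $I=\emptyset$ the two corner points are adjacent and the doubling already equals $\vierpartrot=\sigma_2$, which lies in $P_O$ by Lemma~\ref{LemPisigma}. The one step I would be most careful about is verifying that the extremal choice leaves \emph{every} non-corner block of $q$ of size two, since this is exactly the hypothesis that makes Lemma~\ref{LemProjIsPO} available.
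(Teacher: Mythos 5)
Your proof is correct and takes essentially the same route as the paper's own: rotate the partition (Lemma \ref{LemWeakRot}(b)) so that two points of a size-$\geq 3$ block sit at the two ends of the doubled range, form the partial doubling (Lemmata \ref{LemPartDoubling}, \ref{LemPartDoublingProj}), verify that the corner block has size four and all other blocks size two, and conclude via Lemma \ref{LemProjIsPO}. Your extremal choice (shortest interior arc between two consecutive points of a size-$\geq 3$ block) is merely a reformulation of the paper's minimality condition on $s$, and the block-structure verification excluding trapped blocks is the same argument.
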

\begin{proof}
Let $n\in\N$, let $X\subset P$ be $n$-admissible and assume $p,p^*\in H_n(X)$.
By weak rotation (Lemma \ref{LemWeakRot}), we may assume that the points $1,s$ and $t$ with $1<s<t\leq l$ are in the same block (possibly containing further points). 
Moreover, we may assume that $s$ is minimal in the sense that $1$ (and hence also $s$) is connected to no point $1<x<s$, and also there are no three points $a,b,c$ in the same block with $1<a<b< s$. 

By partial doubling of $p$ (Lemma \ref{LemPartDoubling}), there is a partition $q\in P(s,s)$ in $H_n(X)$ such that the upper points $1$ and $s$ and the lower points $1$ and $s$ are all in the same block $V$. We prove that $V$ consists of exactly these four points and that all other blocks of $q$ consist in exactly two points. Indeed, as $p$ has no singletons, $q$ must have no singletons. Now, pick a lower point $1<t<s$. If it is linked in $p$ to another lower point, it cannot be linked to any upper point nor to a third lower point, by the minimality assumption on $s$. We infer that all blocks of $q$ have size exactly two, apart from the block $V$ which has exactly four points. Together with Lemmata \ref{LemPartDoublingProj} and \ref{LemProjIsPO}, we obtain $q\in P_O$. Since $q\in H_n(X)$ and $q=q^*$, we infer $\paarpart,\baarpart\in H_n(X)$.
\end{proof}

\begin{rem}
\label{RemNCm}
The above lemma shows that although the sets $NC_{[m]}$ of Proposition \ref{PropExNonBSCateg} are generalized categories of partitions which are \emph{no} Banica-Speicher categories of partitions, the sets $H_n(NC_{[m]})$ in turn \emph{are} Banica-Speicher categories of partitions (Proposition \ref{PropEnforcedOrth}). So, the associated partition $C^*$-algebras $A_n(NC_{[m]})$ are in fact Banica-Speicher quantum groups.
\end{rem}

\subsection{Blocks of length one}

\begin{lem}\label{LemPOSingleton}
Let $X\subset P$ be $n$-admissible and assume $H_n(X)\cap P(0,l)\neq\emptyset$ and $H_n(X)\cap P(k,0)\neq\emptyset$ for some $k,l\in\N$. If $H_n(X)$ contains any partition $h\in P(k',l')$ which has a singleton block, then also $\singleton\otimes\singleton,\downsingleton\otimes\downsingleton\in H_n(X)$. If $l$ or $k$ is odd, then even $\singleton,\downsingleton\in H_n(X)$ holds true.
\end{lem}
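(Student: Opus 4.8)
The plan is to reduce both conclusions to a single self-adjoint element and then to invoke the Idempotents Lemma~\ref{LemIdem}. Write $z_i:=\sum_m u_{im}$ and $w_j:=\sum_m u_{mj}$ for the row- and column-sums of $u$. Unravelling the definition of $R(p)$ in the cases $p\in P(0,2)$, $P(2,0)$, $P(0,1)$, $P(1,0)$, the relation $R(\singleton\otimes\singleton)$ is precisely $z_iz_j=1$ for all $i,j$, the relation $R(\downsingleton\otimes\downsingleton)$ is $w_iw_j=1$, while $R(\singleton)$ and $R(\downsingleton)$ read $z_i=1$ and $w_j=1$. Hence it suffices to produce a single self-adjoint $\kappa\in A_n(X)$ with $z_i=w_j=\kappa$ for all $i,j$ (equivalently $\idpartsingletonww\in H_n(X)$), together with the two scalar identities $\kappa^k=1$ and $\kappa^l=1$. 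Indeed, Lemma~\ref{LemIdem}(a) then gives $\kappa^2=1$, so $z_iz_j=w_iw_j=\kappa^2=1$ and $\singleton\otimes\singleton,\downsingleton\otimes\downsingleton\in H_n(X)$; and if $k$ or $l$ is odd, Lemma~\ref{LemIdem}(b) gives $\kappa=1$, so $z_i=w_j=1$ and $\singleton,\downsingleton\in H_n(X)$.

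First I would show that all row sums agree whenever $H_n(X)$ contains a partition with a \emph{lower} singleton (the case of an upper singleton being symmetric). To isolate such a singleton I would cap off the upper line of a suitable tensor power $h^{\otimes j}$: tensoring copies of $p_0\in H_n(X)\cap P(0,l)$ and capping groups of $k$ legs with $q_0\in H_n(X)\cap P(k,0)$ realises a pure-lower partition on any number of legs lying in the numerical semigroup generated by $l$ and $k$, and choosing $j$ so that $jk'$ is a large multiple of $\gcd(l,k)$ makes the required cap available. Composing this cap on top of $h^{\otimes j}$ yields $a\in H_n(X)\cap P(0,m)$ still carrying a lower singleton. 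Using a weak line rotation (Lemma~\ref{LemWeakRot}(a), with $q_0$ supplying the rotation) I move the singleton to the last leg, so that in $R(a)$ the sum over the free index of the singleton factors off on the right as $z_{\beta_m}$. Evaluating $R(a)$ at indices $\beta_1,\dots,\beta_{m-1}$ matching the remaining blocks then reads $1=T\,z_{\beta_m}$ for a fixed element $T$ independent of $\beta_m$; since the $u_{ij}$ are self-adjoint, taking adjoints shows $T=T^*$ is a common two-sided inverse of every $z_i$, so all $z_i$ coincide with one invertible self-adjoint element $z$.

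Granting the analogous statement for column sums, combining a lower and an upper singleton into $\idpartsingletonww$ (whose relation is exactly $z_i=w_j$) forces $z=w=:\kappa$, and then the two scalar identities come cheaply: summing $R(q_0)$ over all its upper indices turns each factor $\sum_{\alpha_a}u_{\gamma_a\alpha_a}$ into a row sum $=\kappa$, giving $\kappa^k=1$, and summing $R(p_0)$ over all its lower indices gives $\kappa^l=1$; the Idempotents Lemma finishes as in the first paragraph. I expect the main obstacle to be exactly the step linking rows to columns, namely producing a singleton on the line \emph{opposite} to the one supplied by $h$: a generalized category need not be closed under the involution, and without the pair partitions $\paarpart,\baarpart$ one cannot rotate a point from one line to the other, so the one-sided argument above genuinely produces only $z$ (or only $w$). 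The crux will therefore be a direct construction of $\idpartsingletonww$ (or of the opposite-line singleton) out of $h$, $p_0$ and $q_0$, in which $p_0$ and $q_0$ are made to play the role of the missing cup and cap and the self-adjoint idempotent input of Lemma~\ref{LemIdem} is used to turn the resulting invertibility into the common value $\kappa$.
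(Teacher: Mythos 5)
Your overall plan and your endgame agree with the paper's proof. The paper also reduces everything to $\idpartsingletonww\in H_n(X)$: it forms $p':=h^{\otimes l}p^{\otimes k'}\in H_n(X)\cap P(0,l'l)$ (your numerical-semigroup detour is unnecessary --- capping $h^{\otimes l}$ by $p^{\otimes k'}$ always works), moves the singleton to the first position by weak rotation (Lemma~\ref{LemWeakRot}), obtains $\idpartsingletonww$ by the shifted doubling explained below, and then argues exactly as you propose: $R(\idpartsingletonww)$ gives a single self-adjoint $z=\sum_m u_{im}=\sum_m u_{mj}$, cutting $p$ and $q$ into singletons via Lemma~\ref{LemBasicOper}(b) gives $\singleton^{\otimes l},\downsingleton^{\otimes k}\in H_n(X)$, hence $z^l=z^k=1$, and Lemma~\ref{LemIdem} finishes. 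Your intermediate observation --- that $R(a)$ with the singleton at an end position reads $1=Tz_{\beta_m}$, so that taking adjoints produces a common self-adjoint two-sided inverse and hence equality and invertibility of all row sums --- is correct (and not in the paper), but it is strictly weaker than what the lemma needs.

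The genuine gap is the one you concede in your final paragraph: you never construct $\idpartsingletonww$, i.e.\ you never link the row sums to the column sums, and this is the heart of the lemma, not a loose end. From a lower singleton your argument controls only the $z_i$; this at best yields $z^k=1$ (by summing $R(q_0)$ over its upper indices) and thus $\singleton\otimes\singleton\in H_n(X)$, while the column statements --- $\downsingleton\otimes\downsingleton\in H_n(X)$, and $\downsingleton\in H_n(X)$ when $l$ is odd but $k$ is even --- remain unreachable, so at most half of the assertion is proved. The idea you are missing is the shifted doubling of Lemma~\ref{LemWeakDoubling}: once the singleton is the \emph{first} point of $p'$, the one-leg shifted doubling $\bigl(\idpart\otimes\widetilde{p'}^{\,*}\bigr)\bigl(p'\otimes\idpart\bigr)\in P(1,1)$ has both its upper and its lower point coming from copies of that singleton, so it \emph{is} $\idpartsingletonww$; with that partition in hand everything else is your own first paragraph. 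Finally, your diagnosis of \emph{why} you are stuck --- that a generalized category need not be closed under involution --- is astute: Lemma~\ref{LemWeakDoubling} as stated assumes $p',p'^{\,*}\in H_n(X)$, and the paper invokes it for $p'$ without verifying $p'^{\,*}\in H_n(X)$, so you located precisely the most delicate step of the paper's own argument; but where the paper commits to a concrete doubling construction, your proposal stops at a statement of intent.
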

\begin{proof}
If $h$ has a singleton block on its lower line, we choose a partition $p\in H_n(X)\cap P(0,l)$ and consider the following partition $p'$ obtained by composition:
\[p':=h^{\otimes l}p^{\otimes k'}\in P(0,l'l)\]
Thus, $p'$ contains a block which is a singleton and by weak rotation (Lemma \ref{LemWeakRot}), we may assume that the first point of $p'$ is a singleton block. By shifted doubling (Lemma \ref{LemWeakDoubling}), we infer $\idpartsingleton\in H_n(X)$. Likewise, we deduce $\idpartsingleton\in H_n(X)$ if $h$ has a singleton block on its upper line, using a partition $q\in H_n(X)\cap P(k,0)$. The relations $R(\idpartsingleton)$ allow us to define $z:=\sum_k u_{ik}=\sum_ku_{kj}$ independent of the choice of $i$ or $j$, see \cite[App. A]{PartI}. Now, by Lemma \ref{LemBasicOper}(b), we conclude that $\singleton^{\otimes l},\downsingleton^{\otimes k}\in H_n(X)$ and hence $z^l=1$ and $z^k=1$ using the relations $R(\singleton^{\otimes l})$ or rather $R(\downsingleton^{\otimes k})$. As $z=z^*$, we use the Idempotents Lemma \ref{LemIdem} to finish the proof.
\end{proof}

\begin{lem}\label{LemSingletonCase}
Let $p\in P(0,l)$ be a partition containing a singleton as a block and let $l$ be even.
\begin{itemize}
\item[(a)] If $p$ contains two points $1\leq s<t\leq l$ which are in the same block and  $t-s$ is odd, then $p\in P_O$.
\item[(b)] If $p$ contains two points $1\leq s<t\leq l$ which are in the same block and  $t-s$ is even, then $p\in H_n(X)$ implies $\positioner\in H_n(X)$ for all $n$-admissible sets  $X\subset P$ with  $H_n(X)\cap P(k,0)\neq\emptyset$ for some $k\in\N$. 
\end{itemize}
\end{lem}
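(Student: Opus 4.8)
The plan is to strip $p$ down to its pair $\{s,t\}$ together with a collection of singletons, and then to let the parity of $t-s$ decide how far those singletons can be annihilated. First, since $p\in P(0,l)$ has a singleton block and $l$ is even, Lemma \ref{LemPOSingleton} (together with its proof) applies under the standing hypotheses on $X$ and yields $\singleton\otimes\singleton,\ \downsingleton\otimes\downsingleton,\ \idpartsingleton\in H_n(X)$. Having $\idpartsingleton$ available, Lemma \ref{LemBasicOper}(b) lets me disconnect, one at a time, every point of $p$ other than $s$ and $t$, turning it into a singleton. After this cutting I am left with a partition $p'\in H_n(X)\cap P(0,l)$ whose only non-singleton block is the pair $\{s,t\}$; the governing invariant is the number $t-s-1$ of singletons trapped strictly between the two legs of the pair.

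The second ingredient is a capping-and-shuffling mechanism that uses only the singleton pairs. To delete two \emph{adjacent} lower singletons of a partition in $H_n(X)\cap P(0,m)$, I compose it with $\idpart^{\otimes a}\otimes(\downsingleton\otimes\downsingleton)\otimes\idpart^{\otimes b}$; by closure under tensor products and composition (Theorem \ref{ThmPartI}) the result stays in $H_n(X)$ with the two capped legs removed. To bring two non-adjacent singletons next to one another I use weak line rotation (Lemma \ref{LemWeakRot}(a)), which is available because $\downsingleton\otimes\downsingleton\in H_n(X)\cap P(2,0)$. Crucially, throughout these moves I only ever rotate singleton legs and never a leg of the pair, so the weak rotation never disconnects the pair; this matters because I have no pair partition at hand to perform a genuine leg rotation.

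For part (a), $t-s$ odd makes $t-s-1$ even, so all inside singletons can be capped in pairs, bringing $s$ and $t$ together into the cup $\paarpart$. The number of remaining (outside) singletons is $(l-2)-(t-s-1)$, which is even; after rotating pairs of them together and capping, I am left with exactly $\paarpart\in H_n(X)$. Running the up--down mirror of the entire argument on $p^{*}\in H_n(X)\cap P(l,0)$ -- cutting its upper points with $\idpartsingleton$, capping adjacent upper singletons by composing with $\idpart^{\otimes a}\otimes(\singleton\otimes\singleton)\otimes\idpart^{\otimes b}$, and rotating -- produces $\baarpart\in H_n(X)$. Since $n$ and $X$ were arbitrary with the required properties, this gives $\paarpart,\baarpart\in H_n(X)$ in every admissible situation, i.e.\ $p\in P_O$.

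For part (b), $t-s$ even makes $t-s-1$ odd, so capping the inside singletons in pairs leaves exactly one singleton trapped between the legs; likewise the outside count $(l-2)-(t-s-1)$ is odd and reduces to a single outside singleton. The surviving four-point partition is the pair together with exactly one nested and one outer singleton, which, up to a weak line rotation moving a singleton leg, is $\positioner$; closure of $H_n(X)$ under weak rotation then gives $\positioner\in H_n(X)$. The hard part here is the parity bookkeeping together with the verification that the two leftover singletons genuinely cannot be eliminated: they are permanently separated by a leg of the pair, so they can never be made adjacent by a (weak) line rotation and hence cannot be capped by $\downsingleton\otimes\downsingleton$, while single caps $\downsingleton$ are unavailable since $l$ is even. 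This is precisely why part (b) delivers only $\positioner$ rather than membership in $P_O$.
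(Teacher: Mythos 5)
Your proof is correct and follows essentially the same route as the paper's: Lemma \ref{LemPOSingleton} supplies $\downsingleton\otimes\downsingleton$, which is then used to cap off all points other than $s,t$ in adjacent pairs, with the positional parity handled by weak rotation (the paper instead invokes vertical reflection, Theorem \ref{ThmPartI}(d)); your preliminary cutting step via $\idpartsingleton$ and your explicit rotation bookkeeping merely flesh out what the paper compresses into ``composing $p$ with suitable tensor powers of $\downsingleton\otimes\downsingleton$ and $\idpart$'' --- indeed your version is more careful, since when $s-1$ and $l-t$ are both odd pure capping does not suffice and a rotation is genuinely needed. One caution: your closing claim in (b), that one must verify the two leftover singletons ``genuinely cannot be eliminated,'' is neither needed for the statement nor actually established by your argument (you only show that \emph{your} moves cannot eliminate them); whether they can be eliminated at all, i.e.\ whether $\positioner\in P_O$, is precisely the paper's open problem (P1) in Section \ref{SectSummary}.
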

\begin{proof}
Let $X\subset P$ be $n$-admissible and assume $H_n(X)\cap P(k,0)\neq\emptyset$ for some $k\in\N$. Assume $p\in H_n(X)$. By Lemma \ref{LemPOSingleton}, we have $\downsingleton\otimes\downsingleton\in H_n(X)$. Composing $p$ with suitable tensor powers of $\downsingleton\otimes\downsingleton$ and $\idpart$, we infer $\paarpart\in H_n(X)$ in Case (a) and $\positioner\in H_n(X)$ in Case (b) (possibly using Theorem \ref{ThmPartI}(d) in the latter case). From $p^*\in H_n(X)$, we deduce $\baarpart\in H_n(X)$ in Case (a) in a similar way.
\end{proof}

\subsection{Blocks of size two}

\begin{lem}\label{LemPaarBaarPO}
The partition $\paarbaarpart$ is in $P_O$.
\end{lem}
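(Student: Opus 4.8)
The plan is to unravel the relation $R(\paarbaarpart)$ into a statement about $u^tu$ and $uu^t$, and then to pin down the one scalar that survives by using any partition living on a single line together with the Idempotents Lemma. Note first that $\paarbaarpart=\paarbaarpart^*$, so the hypothesis is simply $\paarbaarpart\in H_n(X)$. Since $\paarbaarpart\in P(2,2)$ has its two upper points forming one block and its two lower points forming another, one computes $\delta_{\paarbaarpart}(\gamma,\beta)=\delta_{\gamma_1\gamma_2}\delta_{\beta_1\beta_2}$ and $\delta_{\paarbaarpart}(\alpha,\gamma')=\delta_{\alpha_1\alpha_2}\delta_{\gamma'_1\gamma'_2}$, so that $R(\paarbaarpart)$ reads
\[
\delta_{\beta_1\beta_2}\sum_m u_{m\alpha_1}u_{m\alpha_2}=\delta_{\alpha_1\alpha_2}\sum_m u_{\beta_1 m}u_{\beta_2 m}\qquad\text{for all }\alpha_1,\alpha_2,\beta_1,\beta_2.
\]
Specializing ($\beta_1=\beta_2$ with $\alpha_1\neq\alpha_2$; then $\alpha_1=\alpha_2$ with $\beta_1\neq\beta_2$; finally all indices equal) shows that $u^tu$ and $uu^t$ are both diagonal with one common diagonal entry $c:=\sum_m u_{mi}^2=\sum_m u_{jm}^2$, i.e. $u^tu=uu^t=c\cdot 1$, and clearly $c\geq 0$. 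Moreover $c$ is central: from the associativity $u(u^tu)=(uu^t)u$ one reads off entrywise $u_{ij}c=cu_{ij}$, so $c$ commutes with all generators and hence with all of $A_n(X)$.

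The decisive step is to force $c=1$, and this is where the hypothesis $H_n(X)\cap P(0,l)\neq\emptyset$ enters. I would pick $p\in H_n(X)\cap P(0,l)$ and set $w_\beta:=\sum_{\gamma'}\delta_p(\emptyset,\gamma')u_{\beta_1\gamma'_1}\cdots u_{\beta_l\gamma'_l}$, so that $R(p)$ says precisely $w_\beta=\delta_p(\emptyset,\beta)\,1$. Then I compute $\sum_\beta w_\beta^*w_\beta$ in two ways. Using $R(p)$ directly, $w_\beta^*w_\beta=\delta_p(\emptyset,\beta)\,1$, whence $\sum_\beta w_\beta^*w_\beta=n^b\,1$, where $b$ is the number of blocks of $p$ (the number of $\beta$ that are constant on blocks). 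Expanding $w_\beta^*w_\beta$ instead and summing each index $\beta_a$ separately, the two factors carrying $\beta_a$ sit adjacently, so $\sum_{\beta_a}u_{\beta_a\gamma'_a}u_{\beta_a\delta'_a}=(u^tu)_{\gamma'_a\delta'_a}=c\,\delta_{\gamma'_a\delta'_a}$; peeling off $a=1,\dots,l$ and pulling the resulting central scalars to the front collapses the whole expression to $c^l\sum_{\gamma'}\delta_p(\emptyset,\gamma')\,1=c^l n^b\,1$. Comparing the two evaluations gives $c^l=1$. Since $c\geq 0$, the Idempotents Lemma~\ref{LemIdem}(c) yields $c=1$, hence $u^tu=uu^t=1$; that is, the relations $R(\paarpart)$ and $R(\baarpart)$ of Example~\ref{ExOrth} hold in $A_n(X)$, so $\paarpart,\baarpart\in H_n(X)$ and therefore $\paarbaarpart\in P_O$.

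I expect the genuine obstacle to be exactly this last step: $R(\paarbaarpart)$ by itself only gives orthogonality up to the central scalar $c$ (indeed $u=\lambda\cdot 1$ satisfies $R(\paarbaarpart)$ for every real $\lambda$, with $c=\lambda^2$ arbitrary), so a partition on a single line is genuinely required to normalise $c$. The care needed lies in the two-fold contraction $\sum_\beta w_\beta^*w_\beta$ — specifically in verifying that when $w_\beta^*w_\beta$ is written out, the two factors indexed by each $\beta_a$ become adjacent after the successive contractions, so that $u^tu=c\cdot 1$ may legitimately be applied and the centrality of $c$ used to accumulate the factor $c^l$.
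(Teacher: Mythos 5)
Your proof is correct, but it takes a genuinely different route from the paper's. You unravel $R(\paarbaarpart)$ directly into the statement $u^tu=uu^t=c\cdot 1$ for a positive central element $c$, and then normalise $c$ by contracting an arbitrary relation $R(p)$, $p\in H_n(X)\cap P(0,l)$, against itself; your bookkeeping in the iterated contraction (adjacency of the two $\beta_a$-factors after each step, centrality of $c$ via associativity of matrix multiplication) is sound, and Lemma \ref{LemIdem}(c) then gives $c=1$. The paper instead works combinatorially first: since $\paarbaarpart$ is disconnected (upper pair, lower pair), composing any $p\in H_n(X)\cap P(0,4l)$ with $\paarbaarpart^{\otimes 2l}$ wipes out all structure of $p$ and yields $\paarpart^{\otimes 2l}\in H_n(X)$ by Theorem \ref{ThmPartI}; the relation $R(\paarpart^{\otimes 2l})$, evaluated at the index pattern $(i,j,j,i,i,j,j,i,\ldots)$, then gives $(X_{ij}X_{ij}^*)^l=\delta_{ij}$ for $X_{ij}:=\sum_k u_{ik}u_{jk}$, so $X_{ij}=0$ for $i\neq j$ by positivity and $X_{ii}=1$ by the same Idempotents Lemma, and $R(\baarpart)$ is obtained symmetrically from a partition in $H_n(X)\cap P(k,0)$. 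Comparing the two: the paper's composition trick converts everything into one clean relation, so no centrality argument and no double evaluation of a generic relation are needed; your version isolates the sharper structural fact that $R(\paarbaarpart)$ alone is exactly ``orthogonality up to a positive central scalar'' (your $u=\lambda\cdot 1$ example shows this is optimal), it produces $\paarpart$ and $\baarpart$ simultaneously, and it uses only the hypothesis $H_n(X)\cap P(0,l)\neq\emptyset$ and not $H_n(X)\cap P(k,0)\neq\emptyset$, which is a mild sharpening of the lemma as stated.
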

\begin{proof}
Let $n\in\N$, let $X\subset P$ be $n$-admissible and assume $H_n(X)\cap P(0,l)\neq\emptyset$ and $H_n(X)\cap P(k,0)\neq\emptyset$ for some $k,l\in\N$. Note that we may assume that $k$ and $l$ are multiples of four, by Theorem \ref{ThmPartI}(b).  Assume $\paarbaarpart\in H_n(X)$. Composing any partition $p\in P(0,4l)$ with $\paarbaarpart^{\otimes 2l}$, we infer $\paarpart^{\otimes 2l}\in H_n(X)$. We define:
\[X_{ij}:=\sum_k u_{ik}u_{jk}\in A_n(X)\]
We compute, using the relation $R(\paarpart^{\otimes 2l})$:
\begin{align*}
(X_{ij}X_{ij}^*)^l&=\sum_{k_1,\ldots,k_{2l}}u_{ik_1}u_{jk_1}u_{jk_2}u_{ik_2}\ldots u_{ik_{2l-1}}u_{jk_{2l-1}}u_{jk_{2l}}u_{ik_{2l}}\\
&=\sum_{\gamma_1',\ldots,\gamma_{4l}'}\delta_{\paarpart^{\otimes 2l}}(\emptyset,\gamma')u_{i\gamma_1'}u_{j\gamma_2'}u_{j\gamma_3'}u_{i\gamma_4'}\ldots u_{i\gamma_{4l-3}'}u_{j\gamma_{4l-2}'}u_{j\gamma_{4l-1}'}u_{i\gamma_{4l}'}\\
&=\delta_{\paarpart^{\otimes 2l}}(\emptyset,(ijji\ldots ijji))\\
&=\delta_{ij}
\end{align*}
For $i\neq j$, this proves $X_{ij}X_{ij}^*=0$ and hence $X_{ij}=0$. On the other hand, we have $X_{ii}^{2l}=1$, which implies $X_{ii}=1$ by the Idempotents Lemma \ref{LemIdem}. We conclude that $\paarpart\in H_n(X)$ holds, and we prove $\baarpart\in H_n(X)$ in a similar way.
\end{proof}

\begin{lem}\label{LemPaarCase}
Let $X\subset P$ be $n$-admissible and assume $H_n(X)\cap P(k,0)\neq\emptyset$ for some $k\in\N$.
Let $p\in P(0,l)$ be a partition consisting only in blocks of size two. If $p\in H_n(X)$, then there is a number $m\geq 2$ such that the  following partition $\tau_m\in P(m,m)$ is in $H_n(X)$.
   \savebox{\boxsigmambogen}
   { \begin{picture}(1,3)
       \put(0,0){\line(0,1){0.5}}
       \put(0,1.5){\oval(0.5,2)[r]}
       \put(0,2.5){\line(0,1){0.5}}
     \end{picture}} 
\newsavebox{\boxtaum}
   \savebox{\boxtaum}
   { \begin{picture}(9,3)
      \put(1.1,3){$\circ$}
      \put(2.1,3){$\circ$}
      \put(3.1,3){$\circ$}
      \put(7.1,3){$\circ$}                  
      \put(8.1,3){$\circ$}
      \put(9.1,3){$\circ$}                  
      \put(0,0){\uppartii{1}{1}{9}}
      \put(4.8,2.3){$\cdots$}      
      \put(2,0){\usebox{\boxsigmambogen}}
      \put(3,0){\usebox{\boxsigmambogen}}
      \put(7,0){\usebox{\boxsigmambogen}}
      \put(8,0){\usebox{\boxsigmambogen}}                  
      \put(0,4){\partii{1}{1}{9}}  
      \put(4.8,0){$\cdots$}           
      \put(1.1,-0.3){$\circ$}
      \put(2.1,-0.3){$\circ$}
      \put(3.1,-0.3){$\circ$}
      \put(7.1,-0.3){$\circ$}                  
      \put(8.1,-0.3){$\circ$}
      \put(9.1,-0.3){$\circ$}                  
     \end{picture}}        
\begin{center}
\begin{picture}(12,4)
 \put(0,1.5){$\tau_m=$}
 \put(1,0.2){\usebox{\boxtaum}}
\end{picture}
\end{center}
\end{lem}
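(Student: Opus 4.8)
The plan is to realize $\tau_m$ as the outer \emph{cap--cup} on two strands with identity strings in between, and to manufacture it from the pairing $p$ together with the auxiliary partition $q\in H_n(X)\cap P(k,0)$ supplied by the hypothesis. Throughout I keep in mind that, unlike in Section~\ref{SectDoubling}, I may \emph{not} assume $p^*\in H_n(X)$: only $p$, its vertical reflection $\tilde p$ (Theorem~\ref{ThmPartI}(d)), the auxiliary partitions $q,\tilde q$, and all their tensor products and compositions are at my disposal. Since $\tau_2=\paarbaarpart$ is already handled by Lemma~\ref{LemPaarBaarPO}, the genuinely new content is the production of a larger $\tau_m$, and I will exploit the freedom to choose $m$ rather than insisting on $m=2$.

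First I would put $p$ into a normal form. Since $p$ is a pairing, I choose a block $\{s,t\}$ of $p$ with $t-s$ minimal; then no block of $p$ lies strictly inside the interval $(s,t)$, so the interior points $s+1,\dots,t-1$ are each paired to points outside $[s,t]$. By weak line rotation (Lemma~\ref{LemWeakRot}(a), available precisely because $q\in H_n(X)\cap P(k,0)$) I move this block to the front so that, after relabelling, points $1$ and $r:=t-s+1$ are paired while the minimality is preserved; in the end I will take $m:=r$. This mirrors the minimality bookkeeping already used in Lemmata~\ref{LemThreeBlock} and~\ref{LemProjIsPO}.

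Next I would build a candidate for $\tau_m$. The obstruction is that the clean construction of Section~\ref{SectDoubling} folds $p$ against its reflection $p^*$, which is unavailable here. Instead I would substitute $\tilde p$ and the auxiliary $q,\tilde q$ for $p^*$, exactly as Lemma~\ref{LemWeakRot}(a) substitutes $q$ for $q^*$ in weak rotation: composing $p$ (providing the bottom cup on strands $1$ and $r$) with a reflected copy obtained by bending $\tilde p$ over $q$-legs (providing the top cap), interleaved with $\idpart$-strings on the interior points, yields a partition $w\in H_n(X)\cap P(m',m')$ whose outer two strands carry a cap on top and a cup on bottom and whose distinguished interior strands are pair-through — precisely because the interior points of the innermost pair were forced to pair outside $[1,r]$ in $p$, just as in the ``crucial point'' of the doubling construction. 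By Lemma~\ref{LemProjPart} the relevant restriction of $w$ is a self-adjoint idempotent, so I may pass to the operator-algebra side and run an Idempotents-Lemma argument in the style of Lemma~\ref{LemPaarBaarPO}: setting up the element $\sum_c u_{\cdot c}(\cdots)u_{\cdot c}$ dictated by the outer pair and raising it to a suitable power collapses the off-diagonal contributions and forces the relation $R(\tau_m)$. A final weak restriction (Section~\ref{SectDoubling}) to the first $m$ points cuts $w$ down to the exact cap--through--cup pattern, giving $\tau_m\in H_n(X)$.

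The main obstacle is the middle step: synthesising the \emph{upper} cap without access to $p^*$, and controlling the many auxiliary legs of the $p$-, $\tilde p$- and $q$-copies so that the composite partition has exactly the two size-two end blocks and the prescribed pair-through strings, and nothing more. This is where the pairing hypothesis is essential — every interior leg must find a partner, so no stray singletons or larger blocks can appear — and where the flexibility in $m$ is used. Pinning down the precise value of $m$ and verifying that the idempotent power cleanly yields $R(\tau_m)$, rather than a relation for some coarser partition, is the delicate bookkeeping that I expect to occupy the bulk of the argument.
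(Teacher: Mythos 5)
Your normal-form step (choosing a block of minimal length and rotating it to the front) is exactly the paper's first step. But the paper then finishes in one stroke: it applies the partial doubling of Lemma~\ref{LemPartDoubling} to the first $m$ legs of the rotated partition; since the pair $\{1,m\}$ lies entirely inside those legs while every interior point is paired outside them, the doubling produces precisely the upper cap, the lower cup and the through-strings of $\tau_m$. You are right that this invokes a lemma whose hypothesis is $p,p^*\in H_n(X)$, whereas the statement here only lists $p\in H_n(X)$; but the paper's (implicit) resolution is to use $p^*$ anyway, which is harmless because in every context where the lemma is applied the involution is available ($P_O$ is defined through $p,p^*\in H_n(X)$, and the candidate algebras of Section~\ref{SectSummary} are $A_n(\{p,p^*\})$). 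The correct repair is to add that hypothesis, not to argue around it.

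Your attempt to argue around it is where the genuine gap lies. The operation you need --- ``bending $\tilde p$ over $q$-legs'' so that part of $\tilde p$ becomes the upper cap --- does not exist in a generalized category: tensor products, compositions and vertical reflections never move a point from the lower row to the upper row, and weak rotation (Lemma~\ref{LemWeakRot}(a)), which you cite as the model, outputs a partition that still lies in $P(0,l)$; it creates no upper points at all. The only generators with upper points at your disposal are $q$ and $\tilde q$, whose block structure is arbitrary, and this is fatal rather than a bookkeeping issue: in any partition built from $p,\tilde p,q,\tilde q,\idpart$ by these operations, a block containing two upper boundary points and no lower boundary point must turn a descending string into an ascending one, and the only blocks that can do this are blocks of $q$-factors of size at least two. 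If, say, $q=\downsingleton^{\otimes k}$ is the only partition available in $P(k,0)$, the cap of $\tau_m$ can never be formed from your ingredients --- whereas partial doubling with $p^*$ produces it immediately. The remaining steps you sketch (Lemma~\ref{LemProjPart}, an ``Idempotents-Lemma argument in the style of Lemma~\ref{LemPaarBaarPO}'') are not backed by any concrete element of $A_n(X)$ or any computed relation, so they cannot close this hole; as you yourself concede, the entire content of the lemma is concentrated exactly in the step you leave open.
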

\begin{proof}
The first point of $p$ is connected to some point $m$. We may assume that $m$ is minimal, by weak rotation (Lemma \ref{LemWeakRot}), i.e. there are no two points $1<s<t<m$ such that $s$ and $t$ are in the same block. By partial doubling (Lemma \ref{LemPartDoubling}) we obtain $\tau_m\in H_n(X)$.
\end{proof}

\subsection{Partitions of odd length}

\begin{lem}\label{LemOddLength}
Let $p\in P(0,l)$ be such that $l$ is odd. If $p\neq \singleton^{\otimes l}$, then $p\in P_O$.
\end{lem}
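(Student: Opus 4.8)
The plan is to prove $p\in P_O$ directly from the definition: fix $n\in\N$ and an $n$-admissible $X\subset P$ with $H_n(X)\cap P(0,l')\neq\emptyset$, $H_n(X)\cap P(k',0)\neq\emptyset$ and $p,p^*\in H_n(X)$, and derive $\paarpart,\baarpart\in H_n(X)$. The driving observation is purely arithmetic: since the block sizes of $p$ sum to the odd number $l$, they cannot all equal $2$, so $p$ must possess a block of odd size, i.e.\ either a singleton or a block of size at least three. I would split along exactly this dichotomy. Note also that $p$ itself is an odd-length witness, $p\in H_n(X)\cap P(0,l)$ and $p^*\in H_n(X)\cap P(l,0)$ with $l$ odd, which will be used repeatedly.

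First I would dispose of the case where $p$ has \emph{no} singleton. Then every block has size $\geq 2$, and by the arithmetic remark some block has size $\geq 3$; as $p$ contains no singletons, Lemma \ref{LemThreeBlock} applies and gives $\paarpart,\baarpart\in H_n(X)$ (applying its conclusion $p\in P_O$ to the present $X$, whose witness conditions are met by $p$ and $p^*$). This settles the no-singleton case immediately.

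The remaining case is that $p$ has a singleton block. Here the oddness of $l$ is the essential input. Since $p$ has a singleton block and $H_n(X)$ contains the odd-length witness $p\in P(0,l)$, Lemma \ref{LemPOSingleton} yields not merely $\singleton\otimes\singleton$ and $\downsingleton\otimes\downsingleton$ but the \emph{honest} singletons $\singleton,\downsingleton\in H_n(X)$. This is precisely the tool unavailable in the even-length situation (compare Lemma \ref{LemSingletonCase}(b), which only produces $\positioner$), and it lets me cap points one at a time. Because $p\neq\singleton^{\otimes l}$, I may fix a non-singleton block $B$ with $|B|\geq 2$ and cap off every point outside $B$: composing $p$ with $\bigotimes_{j=1}^{l}c_j$, where $c_j=\downsingleton$ for $j\notin B$ and $c_j=\idpart$ for $j\in B$, produces by Theorem \ref{ThmPartI}(b),(c) a partition $q\in H_n(X)\cap P(0,|B|)$ consisting of the single block $B$; capping $p^*$ with $\singleton$ in the analogous way gives $q^*\in H_n(X)\cap P(|B|,0)$. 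If $|B|=2$ then $q=\paarpart$ and $q^*=\baarpart$ lie in $H_n(X)$ and we are done; if $|B|\geq 3$ then $q$ has no singleton and a block of size $\geq 3$, so Lemma \ref{LemThreeBlock} together with $q,q^*\in H_n(X)$ again yields $\paarpart,\baarpart\in H_n(X)$.

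I expect the only delicate point to be the bookkeeping in the singleton case: verifying that the one-shot capping composition lands in $P(0,|B|)$ with the correct block structure (the $\downsingleton$-legs contribute upper-only points, matching the $l$ lower points of $p$), and confirming that capping points outside $B$ never damages $B$ even though it may transiently turn other blocks into singletons. The conceptual heart, however, is recognizing that oddness is exactly what upgrades $\singleton\otimes\singleton$ to $\singleton$ via Lemma \ref{LemPOSingleton}, after which the reduction to Lemma \ref{LemThreeBlock} (or to $\paarpart,\baarpart$ outright) is routine.
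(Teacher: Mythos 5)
Your proof is correct and follows essentially the same route as the paper: the same singleton/no-singleton dichotomy, with Lemma \ref{LemPOSingleton} (using the odd-length witness $p$ itself to upgrade $\singleton\otimes\singleton$ to honest singletons) in the first case and Lemma \ref{LemThreeBlock} in the second. The only cosmetic difference is in the singleton case, where the paper caps $p$ down to just two points $s<t$ of a common block, obtaining $\paarpart$ directly regardless of that block's size, whereas you keep the whole block $B$ and re-invoke Lemma \ref{LemThreeBlock} when $|B|\geq 3$ -- both work.
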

\begin{proof}
Let $n\in\N$, let $X\subset P$ be $n$-admissible and assume $p,p^*\in H_n(X)$.
If $p$ contains a singleton, Lemma \ref{LemPOSingleton} yields that $\singleton,\downsingleton\in H_n(X)$. As $p\neq\singleton^{\otimes l}$, there are two points $1\leq s<t\leq l$ which are in the same block. Composing $p$ with 
\[\downsingleton^{\otimes (s-1)}\otimes\idpart\otimes\downsingleton^{\otimes (t-s-1)}\otimes\idpart\otimes\downsingleton^{\otimes (l-t)},\]
we obtain $\paarpart\in H_n(X)$. Likewise, the composition of $p^*$ with tensor powers of $\singleton$ and $\idpart$ yields $\baarpart\in H_n(X)$. If $p$ does not contain a singleton, we find a block of length at least three, since $l$ is odd, and we use Lemma \ref{LemThreeBlock} in order to finish the proof.
\end{proof}

\subsection{Cases excluded from our machinery}

If $H_n(X)$ is such that $H_n(X)\cap P(0,l)=\emptyset$ and $H_n(X)\cap P(k,0)=\emptyset$ for all $k,l\in\N$, we do not know whether $(A_n(X),u)$ is a compact matrix quantum group (the invertibility of $u$ is unclear), see Theorem \ref{ThmExCMQG1}. Hence, we refrained from investigating $P_O$ in this case. The same holds true, for our general assumption $p\neq\singleton^{\otimes l}$ -- in the opposite case, again the compact matrix quantum group structure is not guaranteed.

\begin{ex}
 Let us end this section with an example on relations which do not imply orthogonality of $u$. Let $a\in\mathbb C$ with $a\neq 0$ and $a\neq 1$. The matrix
\[\begin{pmatrix} a&1-a&0\\1-a&0&a\\0&a&1-a\end{pmatrix}\]
satisfies the relations $R(\singleton)$ and $R(\downsingleton)$, but it is not orthogonal. However, the set $\{\singleton,\downsingleton\}$ is not $n$-admissible, since $a$ may be chosen arbitrarily large. We thank Stefan Jung for communicating this example to us.
\end{ex}

\section{Summary and non-unitary Banica-Speicher quantum groups}
\label{SectSummary}

Summarizing our investigations of $P_O$, we obtain the following theorem characterizing all cases of partitions  $p\in P(0,l)$ with $p\neq \singleton^{\otimes l}$.

\begin{thm}\label{ThmCriteria}
Let $X\subset P$ be $n$-admissible and assume  $H_n(X)\cap P(k,0)\neq\emptyset$ for some $k\in\N$. Let $p\in P(0,l)$ be such that $p\neq \singleton^{\otimes l}$.
\begin{itemize}
\item[(a)] If $l$ is odd, then $p\in P_O$.
\item[(b)] If $l$ is even and if $p$ contains a singleton as well as two points $1\leq s<t\leq l$ which are in the same block and  $t-s$ is odd, then $p\in P_O$.
\item[(c)] If $l$ is even and if $p$ contains a singleton as well as two points $1\leq s<t\leq l$ which are in the same block and  $t-s$ is even, then $p\in H_n(X)$ implies $\positioner\in H_n(X)$.
\item[(d)] If $l$ is even and if $p$ contains no singletons but a block of length at least three, then $p\in P_O$.
\item[(e)] If $l$ is even and if all blocks of $p$ are of size two, then $p\in H_n(X)$ implies $\tau_m\in H_n(X)$ for some $m\geq 2$. Note that $\tau_2=\paarbaarpart\in P_O$.
\end{itemize}
\end{thm}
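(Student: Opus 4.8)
The plan is to recognize this as a bookkeeping theorem: each of the five cases has already been settled by a dedicated lemma earlier in this section, so the only work is to verify that the listed hypotheses are exhaustive for a partition $p\in P(0,l)$ with $p\neq\singleton^{\otimes l}$ (under the standing assumption that $X$ is $n$-admissible with $H_n(X)\cap P(k,0)\neq\emptyset$) and then to cite the matching statement for each branch.

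First I would dispose of case (a): when $l$ is odd, this is precisely Lemma \ref{LemOddLength}, which yields $p\in P_O$ directly. For the remaining cases I assume $l$ is even and split according to the block structure of $p$. If $p$ contains a singleton, then since $p\neq\singleton^{\otimes l}$ there is at least one block of size $\geq 2$, hence two points $1\leq s<t\leq l$ lying in a common block; depending on the parity of $t-s$ we land exactly in the hypothesis of Lemma \ref{LemSingletonCase}(a) or (b), which give cases (b) and (c). If instead $p$ has no singleton, then every block has size $\geq 2$: either some block has size $\geq 3$, which is Lemma \ref{LemThreeBlock} and yields case (d), or all blocks have size exactly two, which is Lemma \ref{LemPaarCase} and yields case (e). The closing remark $\tau_2=\paarbaarpart\in P_O$ is Lemma \ref{LemPaarBaarPO}. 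This exhausts all partitions, since the dichotomy ``singleton present / absent'' together with ``block of size $\geq 3$ present / all blocks of size two'' covers the even-$l$ case completely.

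There is no genuine obstacle remaining at this level; the difficulty is entirely absorbed into the preceding lemmas — chiefly the idempotent arguments behind Lemma \ref{LemThreeBlock} (via Lemmas \ref{LemProjIsPO} and \ref{LemIdem}) and the partial-doubling reductions. The one point that deserves explicit comment is that the conclusions of cases (c) and (e) are deliberately \emph{weaker} than $p\in P_O$: they only produce $\positioner$, respectively some $\tau_m$, rather than $\paarpart,\baarpart\in H_n(X)$. This asymmetry is the whole point of the classification, since these are precisely the branches in which enforced orthogonality is not (yet) established and hence the candidates for genuinely non-unitary partition quantum groups.

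Finally I would note that cases (b) and (c) are not mutually exclusive as conditions on a fixed $p$ (a single partition may contain pairs of both parities), but this causes no difficulty for the statement: each case is an implication, and for a partition admitting an odd-difference pair case (b) already applies and forces $p\in P_O$, so case (c) is only informative on the residual partitions all of whose same-block pairs have even difference. With this understood, the theorem follows by assembling the cited lemmas, and no further computation is required.
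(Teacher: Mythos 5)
Your proof is correct and takes essentially the same approach as the paper: the paper's proof of Theorem \ref{ThmCriteria} is precisely the case-by-case citation of Lemma \ref{LemOddLength} for (a), Lemma \ref{LemSingletonCase} for (b) and (c), Lemma \ref{LemThreeBlock} for (d), and Lemmata \ref{LemPaarBaarPO} and \ref{LemPaarCase} for (e). Your added remarks on the exhaustiveness of the case distinction and on the overlap between cases (b) and (c) are sound observations that the paper leaves implicit.
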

\begin{proof}
(a) Lemma \ref{LemOddLength}

(b) and (c) Lemma \ref{LemSingletonCase}

(d) Lemma \ref{LemThreeBlock}

(e) Lemmata \ref{LemPaarBaarPO} and \ref{LemPaarCase}
\end{proof}

We conclude that in many cases the orthogonality of the matrix $u$ is implied by partitions $p\in H_n(X)$, even for $p\notin\{\paarpart,\baarpart\}$. In these cases, the compact matrix quantum groups associated to the partition $C^*$-algebras are nothing but Banica-Speicher quantum groups -- although we might have started with a set $X\subset P$ which is not a Banica-Speicher category of partitions.

On the other hand, we isolated a few partitions $p\in P(0,l)$, which imply that $(A_n(X),u)$ is a compact matrix quantum group, but for which we were not able to prove $p\in P_O$. Conversely, if one can show $p\notin P_O$, then this would yield an example of a ``non-unitary Banica-Speicher quantum group'', i.e. a quantum group arising from partitions which does not fit into the definition of Banica and Speicher, because the matrix $u$ is not orthogonal (and even not unitary). These candidates for non-unitary Banica-Speicher quantum groups $(A_n(\{p,p^*\}),u)$ are:
\begin{itemize}
\item[(P1)] $p=\positioner$
\item[(P2)] $p=\crosspartoneline$ (implying $\tau_3\in H_n(X)$)
\item[(P3)] More exotic partitions falling into the classes (c) and (e) of Theorem \ref{ThmCriteria}.
\end{itemize}

What is needed now is either a proof of $p\in P_O$ for the partitions in (P1), (P2) and (P3), or conversely a proof that the matrix $u$ in $(A_n(\{p,p^*\}),u)$ is not orthogonal. For the latter, one probably needs to find a good representation of $A_n(\{p,p^*\})$ on some Hilbert space, but unfortunately we have none at hand.  For proving $p\notin P_O$ in case (P2) it would be enough to prove that $A_n(\{p,p^*\})$ is noncommutative. See also \cite[Problem 3.5 and 3.6]{PartI} for a concrete formulation of the problem regarding (P1) and (P2), in terms of basic Hilbert space language.

\bibliographystyle{alpha}
\bibliography{PartitionBib}

\begin{thebibliography}{Web17b}

\bibitem[BS09]{BS}
Teodor Banica and Roland Speicher.
\newblock Liberation of orthogonal {L}ie groups.
\newblock {\em Adv. Math.}, 222(4):1461--1501, 2009.

\bibitem[FW16]{FW}
Amaury Freslon and Moritz Weber.
\newblock On the representation theory of partition (easy) quantum groups.
\newblock {\em J. Reine Angew. Math.}, 720:155--197, 2016.

\bibitem[Mal16]{Mala}
Sara Malacarne.
\newblock Woronowicz's {T}annaka-{K}rein duality and free orthogonal quantum
  groups.
\newblock {\em arxiv:1602.04807}, 2016.

\bibitem[NT13]{NT}
Sergey Neshveyev and Lars Tuset.
\newblock {\em Compact quantum groups and their representation categories},
  volume~20 of {\em Cours Sp\'ecialis\'es [Specialized Courses]}.
\newblock Soci\'et\'e Math\'ematique de France, Paris, 2013.

\bibitem[Tim08]{Tim}
Thomas Timmermann.
\newblock {\em An invitation to quantum groups and duality}.
\newblock EMS Textbooks in Mathematics. European Mathematical Society (EMS),
  Z\"urich, 2008.
\newblock From Hopf algebras to multiplicative unitaries and beyond.

\bibitem[TW15]{TWApp}
Pierre Tarrago and Moritz Weber.
\newblock Appendix.
\newblock In {\em Unitary easy quantum groups: the free case and the group
  case}. arxiv:1512.00195, 2015.

\bibitem[TW16]{TWop}
Pierre Tarrago and Moritz Weber.
\newblock Unitary easy quantum groups: the free case and the group case.
\newblock {\em Int. Math. Res. Not.}, 2016.

\bibitem[TW17]{TWcomb}
Pierre Tarrago and Moritz Weber.
\newblock The classification of tensor categories of two-colored noncrossing
  partitions.
\newblock {\em \emph{to appear in} Journal of Combinatorial Theory, Series A},
  2017.

\bibitem[Wan95]{WaOn}
Shuzhou Wang.
\newblock Free products of compact quantum groups.
\newblock {\em Comm. Math. Phys.}, 167(3):671--692, 1995.

\bibitem[Wan98]{WaSn}
Shuzhou Wang.
\newblock Quantum symmetry groups of finite spaces.
\newblock {\em Comm. Math. Phys.}, 195(1):195--211, 1998.

\bibitem[Web13]{We}
Moritz Weber.
\newblock On the classification of easy quantum groups.
\newblock {\em Adv. Math.}, 245:500--533, 2013.

\bibitem[Web16]{WeEQGLN}
Moritz Weber.
\newblock Easy quantum groups.
\newblock In {\em Free probability and operator algebras}, M\"unster Lectures
  in Mathematics. European Mathematical Society (EMS), Z\"urich, 2016.

\bibitem[Web17a]{WeLInd}
Moritz Weber.
\newblock Introduction to compact (matrix) quantum groups and
  {B}anica-{S}peicher (easy) quantum groups.
\newblock {\em Proc. Indian Acad. Sci. Math. Sci.}, to appear:61 pages, 2017.

\bibitem[Web17b]{PartI}
Moritz Weber.
\newblock Partition {$C^*$}-algebras.
\newblock {\em arxiv:1710.06199}, 2017.

\bibitem[Wor87]{WoCMQG}
S.~L. Woronowicz.
\newblock Compact matrix pseudogroups.
\newblock {\em Comm. Math. Phys.}, 111(4):613--665, 1987.

\bibitem[Wor88]{WoTK}
S.~L. Woronowicz.
\newblock Tannaka-{K}re\u\i n duality for compact matrix pseudogroups.
  {T}wisted {${\rm SU}(N)$} groups.
\newblock {\em Invent. Math.}, 93(1):35--76, 1988.

\bibitem[Wor91]{WoCMQG2}
S.~L. Woronowicz.
\newblock A remark on compact matrix quantum groups.
\newblock {\em Lett. Math. Phys.}, 21(1):35--39, 1991.

\end{thebibliography}

\end{document}